\definecolor{Gray}{gray}{0.9}
\definecolor{LGray}{gray}{0.91}
\tikzset{midarrow/.style = {postaction=decorate, decoration={markings,mark=at position .5 with \arrow{stealth}}}} 
\def\pv#1{%
  \pgfkeysvalueof{/tikz/quiver/#1}%
}
\tikzset{
  curve/.style={
    quiver/.cd,
    #1,
    /tikz/.cd,
    to path={
      (\tikztostart) .. controls ($(\tikztostart)!\pv{pos}!(\tikztotarget)!\pv{height}!270:(\tikztotarget)$)
    and ($(\tikztostart)!1-\pv{pos}!(\tikztotarget)!\pv{height}!270:(\tikztotarget)$)
    .. (\tikztotarget)\tikztonodes}
  },
  quiver/.cd,
  pos/.initial=0.35,
  height/.initial=0
}
\newsavebox\E
\sbox\E{\begin{tikzpicture}[scale=2]
\fill (0,0)  circle[radius=.6pt];
\fill (-.35,-.61)  circle[radius=.6pt];
\fill (.35,-.61)  circle[radius=.6pt];
\draw (-.70, 0) node{$E^1_8$:};
\draw[->, shorten >=5pt] (0,0) to (.35,-.61);
\draw[->, shorten >=5pt] (.35,-.61) to [in=0,out=60] (0,0);
\draw[->, shorten >=5pt] (.35,-.61) to (-.35,-.61);
\draw[->, shorten >=5pt] (-.35,-.61) to [in=-120,out=-60] (.35,-.61) ;
\draw[->, shorten >=5pt] (-.35,-.61) to (0,0);
\draw[->, shorten >=5pt] (0,0) to[in=130,out=50, loop] (0,0);
\draw[->, shorten >=5pt] (-.35,-.61) to[in=250,out=170, loop] (-.35,-.61);
\end{tikzpicture}} 
\newsavebox\F
\sbox\F{\begin{tikzpicture}[scale=2]
\fill (0,0)  circle[radius=.6pt];
\fill (-.35,-.61)  circle[radius=.6pt];
\fill (.35,-.61)  circle[radius=.6pt];
\draw (-.70, 0) node{$E^1_{12}:$};
\draw[->, shorten >=5pt] (0,0) to (.35,-.61);
\draw[->, shorten >=5pt] (.35,-.61) to [in=0,out=60] (0,0);
\draw[->, shorten >=5pt] (-.35,-.61) to (0,0);
\draw[->, shorten >=5pt] (0,0) to [in=120,out=180
] (-.35,-.61) ;
\draw[->, shorten >=5pt] (0,0) to[in=130,out=50, loop] (0,0);
\draw[->, shorten >=5pt] (-.35,-.61) to[in=250,out=170, loop] (-.35,-.61);
\end{tikzpicture}}
\newsavebox\Cone
\sbox\Cone{\begin{tikzpicture}[scale=2.5]
\fill (0,0)  circle[radius=.6pt];
\fill (.5,0)  circle[radius=.6pt];
\fill (0,-.5)  circle[radius=.6pt];
\fill (.5,-.5)  circle[radius=.6pt];
\draw[->, shorten >=5pt] (0,0) to (.5,0);
\draw[->, shorten >=5pt]  (0,-.5) to (0,0);
\draw[->, shorten >=5pt]  (.5,-.5) to (0,-.5);
\draw[->, shorten >=5pt] (.5,0) to (.5,-.5);
\draw[->, shorten >=5pt] (0,-.5) to[in=200, out=70] (.5,0);
\draw[->, shorten >=5pt] (.5,0) to[in=20, out=250] (0,-.5);
\draw[->, shorten >=5pt] (.5,0) to[in=45, out=135] (0,0);
\draw[->, shorten >=5pt] (0,0) to[in=175,out=95, loop] (0,0);
\draw[->, shorten >=5pt] (.5,-.5) to[in=275, out=355, loop] (0,-.5);
\end{tikzpicture}}
\newsavebox\Ctwo
\sbox\Ctwo{\begin{tikzpicture}[scale=2.5]
\fill (0,0)  circle[radius=.6pt];
\fill (.5,0)  circle[radius=.6pt];
\fill (0,-.5)  circle[radius=.6pt];
\fill (.5,-.5)  circle[radius=.6pt];
\draw[->, shorten >=5pt] (0,0) to (.5,0);
\draw[->, shorten >=5pt]  (0,-.5) to (0,0);
\draw[->, shorten >=5pt]  (.5,-.5) to (0,-.5);
\draw[->, shorten >=5pt] (.5,0) to (.5,-.5);
\draw[->, shorten >=5pt] (0,-.5) to[in=190, out=80] (.5,0);
\draw[->, shorten >=5pt] (.5,0) to[in=10, out=260] (0,-.5);
\draw[->, shorten >=5pt] (.5,0) to (0,-.5);
\draw[->, shorten >=5pt] (0,0) to[in=175,out=95, loop] (0,0);
\draw[->, shorten >=5pt] (.5,-.5) to[in=275, out=355, loop] (0,-.5);
\end{tikzpicture}}
\newsavebox\Cthree
\sbox\Cthree{\begin{tikzpicture}[scale=2]
\fill (0,0)  circle[radius=.6pt];
\fill (-.35,-.61)  circle[radius=.6pt];
\fill (.35,-.61)  circle[radius=.6pt];
\draw[->, shorten >=5pt] (0,0) to (.35,-.61);
\draw[->, shorten >=5pt] (.35,-.61) to (-.35,-.61);
\draw[->, shorten >=5pt] (-.35,-.61) to [in=-130,out=-50] (.35,-.61) ;
\draw[->, shorten >=5pt] (-.35,-.61) to [in=130,out=50] (.35,-.61) ;
\draw[->, shorten >=5pt] (-.35,-.61) to (0,0);
\draw[->, shorten >=5pt] (0,0) to[in=130,out=50, loop] (0,0);
\draw[->, shorten >=5pt] (-.35,-.61) to[in=250,out=170, loop] (-.35,-.61);
\end{tikzpicture}}
\newsavebox\Cfour
\sbox\Cfour{\begin{tikzpicture}[scale=2.5]
\fill (0,0)  circle[radius=.6pt];
\fill (.5,0)  circle[radius=.6pt];
\fill (0,-.5)  circle[radius=.6pt];
\fill (.5,-.5)  circle[radius=.6pt];
\draw[->, shorten >=5pt] (0,0) to (.5,0);
\draw[->, shorten >=5pt]  (0,-.5) to (0,0);
\draw[->, shorten >=5pt]  (.5,-.5) to (0,-.5);
\draw[->, shorten >=5pt] (.5,0) to (.5,-.5);
\draw[->, shorten >=5pt] (0,-.5) to[in=200, out=70] (.5,0);
\draw[->, shorten >=5pt] (.5,0) to[in=20, out=250] (0,-.5);
\draw[->, shorten >=5pt] (.5,-.5) to[in=-45, out=45] (.5,0);
\draw[->, shorten >=5pt] (0,0) to[in=175,out=95, loop] (0,0);
\draw[->, shorten >=5pt] (.5,-.5) to[in=275, out=355, loop] (0,-.5);
\end{tikzpicture}}
\newsavebox\Cfive
\sbox\Cfive{\begin{tikzpicture}[scale=2.5]
\fill (0,0)  circle[radius=.6pt];
\fill (.5,0)  circle[radius=.6pt];
\fill (0,-.5)  circle[radius=.6pt];
\fill (.5,-.5)  circle[radius=.6pt];
\draw[->, shorten >=5pt] (.5,0) to (0,0);
\draw[->, shorten >=5pt] (0,-.5) to (0,0);
\draw[->, shorten >=5pt] (0,-.5) to (.5,-.5);
\draw[->, shorten >=5pt] (.5,0) to (.5,-.5);
\draw[->, shorten >=5pt] (0,0) to (.5,-.5);
\draw[->, shorten >=5pt] (.5,-.5) to[in=-45, out=-135] (0,-.5);
\draw[->, shorten >=5pt] (.5,-.5) to[in=-45, out=45] (.5,0);
\draw[->, shorten >=5pt] (0,0) to[in=175,out=95, loop] (0,0);
\draw[->, shorten >=5pt] (0,-.5) to[in=265,out=185, loop] (0,-.5);
\end{tikzpicture}}
\newsavebox\Done
\sbox\Done{\begin{tikzpicture}[scale=2]
\fill (0,0)  circle[radius=.6pt];
\fill (-.35,-.61)  circle[radius=.6pt];
\fill (.35,-.61)  circle[radius=.6pt];
\draw[->, shorten >=5pt] (0,0) to (.35,-.61);
\draw[->, shorten >=5pt] (.35,-.61) to (-.35,-.61);
\draw[->, shorten >=5pt] (-.35,-.61) to (0,0);
\draw[->, shorten >=5pt] (0,0) to[in=130,out=50, loop] (0,0);
\draw[->, shorten >=5pt] (-.35,-.61) to[in=250,out=170, loop] (-.35,-.61);
\draw[->, shorten >=5pt] (.35,-.61) to[in=0 ,out=60 ] (0,0);
\draw[->, shorten >=5pt] (.35,-.61) to[in=-90 ,out=180 ] (0,0);
\end{tikzpicture}}
\newsavebox\Dtwo
\sbox\Dtwo{\begin{tikzpicture}[scale=2]
\fill (0,0)  circle[radius=.6pt];
\fill (-.35,-.61)  circle[radius=.6pt];
\fill (.35,-.61)  circle[radius=.6pt];
\draw[->, shorten >=5pt] (0,0) to (.35,-.61);
\draw[->, shorten >=5pt] (.35,-.61) to [in=0,out=60] (0,0);
\draw[->, shorten >=5pt] (.35,-.61) to (-.35,-.61);
\draw[->, shorten >=5pt] (-.35,-.61) to [in=-120,out=-60] (.35,-.61) ;
\draw[->, shorten >=5pt] (-.35,-.61) to (0,0);
\draw[->, shorten >=5pt] (0,0) to[in=130,out=50, loop] (0,0);
\draw[->, shorten >=5pt] (-.35,-.61) to[in=250,out=170, loop] (-.35,-.61);
\end{tikzpicture}} 
\newsavebox\key
\sbox\key{\begin{tikzpicture}
\draw [-{Stealth[length=3mm]}] (0,0) to (1,0); 
\draw (-1.5,0) node{Out-splits to:};
\draw[-{Diamond[open]}, thick] (0,-1)--(1,-1);
\draw (-1.5,-1) node{In-splits I+ to:};
\end{tikzpicture}}
\newcommand{\Zz}{\mathcal{Z}}
\newcommand{\Nz}{\mathbb{Z}}
\newcommand{\ES}{\operatorname{ES}}
\newcommand{\Ss}{\operatorname{SS}}
\newcommand{\K}{\mathsf k}
\def\Zz{\mathbb{Z}}
\newcommand{\GL}{\operatorname{GL}}
\newcommand{\id}{\operatorname{id}}
\newcommand{\End}{\operatorname{End}}
\newcommand\Gr[1][]{{\operatorname{{Gr}-}}}
\newcommand{\Modd}{\operatorname{Mod-}}
\newcommand{\M}{\operatorname{\mathbb M}}
\newtheorem{thm}{Theorem}[section]
\newtheorem{cor}[thm]{Corollary}
\newtheorem{lem}[thm]{Lemma}
\newtheorem{prop}[thm]{Proposition}
\newtheorem{ques}{Question}
\theoremstyle{definition}
\theoremstyle{remark}
\newtheorem{rmk}[thm]{Remark}
\newtheorem{example}[thm]{Example}
\newtheorem{defn}[thm]{Definition}
\newcommand{\gr}{\operatorname{gr}}
\newcommand{\uloopr}[1]{\ar@'{@+{[0,0]+(-4,5)}@+{[0,0]+(0,10)}@+{[0,0] +(4,5)}}^{#1}}
\newcommand{\uloopd}[1]{\ar@'{@+{[0,0]+(5,4)}@+{[0,0]+(10,0)}@+{[0,0]+ (5,-4)}}^{#1}}
\newcommand{\dloopr}[1]{\ar@'{@+{[0,0]+(-4,-5)}@+{[0,0]+(0,-10)}@+{[0, 0]+(4,-5)}}_{#1}}
\newcommand{\dloopd}[1]{\ar@'{@+{[0,0]+(-5,4)}@+{[0,0]+(-10,0)}@+{[0,0 ]+(-5,-4)}}_{#1}}
\newcommand{\luloop}[1]{\ar@'{@+{[0,0]+(-8,2)}@+{[0,0]+(-10,10)}@+{[0, 0]+(2,2)}}^{#1}}
\author{Roozbeh Hazrat}
\address{ 
Centre for Research in Mathematics and Data Science\\
Western Sydney University\\
Australia} \email{r.hazrat@westernsydney.edu.au}
\author{Elizabeth Pacheco}
\address{
Centre for Research in Mathematics and Data Science \\
Western Sydney University\\
Australia} \email{e.pacheco@westernsydney.edu.au}
\begin{document}

\title[Williams' and Graded Equivalence Conjectures]{Williams' and Graded Equivalence Conjectures for small graphs}

\begin{abstract}
We prove what might have been expected: The Williams Conjecture in symbolic dynamics and Graded Classification Conjecture for Leavitt/$C^*$-graph algebras hold for ``small graphs'', i.e., connected graphs with three vertices, no parallel edges, no sinks and with no trivial hereditary and saturated subsets.  Namely, two small graphs are shift equivalent if and only if they are strong shift equivalent if and only if their Leavitt/$C^*$-graph algebras are graded/equivariant Morita equivalent. 

\end{abstract}

\maketitle

\section{introduction}

One of the fundamental objects of study in symbolic dynamics is called a \emph{shift of finite type} (SFT), which consists of sequences (often indexed by 
$\mathbb{Z}$) of symbols chosen from a finite set,   that do not include certain ``forbidden'' finite sequences, equipped with a \emph{shift map} which creates a dynamical behaviour.   The applications  abound, from topological quantum field theory, ergodic
theory, and statistical mechanics to coding and information theory~\cite{Kim-RoushW1999,Lind-Marcus2021}. 

The fundamental work of Williams in the 1970's~\cite{williams} shows that understanding these dynamical systems reduces to understanding certain relations between matrices: two shifts of finite type are topologically  ``conjugate'' if and only if their underlying matrices (or graphs)  are  ``strong shift equivalent''. Williams also introduced the notion of ``shift equivalent'' which were originally thought to be the same as strong shift equivalent but rather more tractable. However, it took 20 years to establish that they are not the same~\cite{kimroush} and after 50 years the extremely subtle  distinction between shift equivalence and strong shift equivalence still remains elusive. 

It was already noticed by Cuntz and Krieger in the 1980's~\cite{ck} that the notion of shifts of finite type and their equivalences can be related to invariances of certain graph $C^*$-algebras. This line of research was  pursued and substantially developed by Matsumoto and others (see \cite{matsumoto, ER} and the references there).  Leavitt path algebras, which are the discrete version of graph $C^*$-algebras, were introduced in 2005 and the classification of these algebras and their relationships to shifts of finite type became an active line of research~\cite{lpabook, willie}. 
As they are purely algebraic objects, they facilitate connections with other areas of algebra such as representation theory and even chip firing~\cite{Gene}.
 One of the main conjectures that would relate these topics is the Graded Morita Classification Conjecture: Two square matrices (with no zero rows) are shift equivalent if and only if their Leavitt path algebras are ``graded Morita equivalent'' if and only if their graph $C^*$-algebras are ``equivariant Morita equivalent''.   
 Reformulating this in terms of $K$-theory enables us to eliminate the no-sink assumption from the conjecture:
 for two finite graphs $E$ and $F$, the Leavitt path algebra $L_\K(E)$ is graded Morita equivalent to $L_\K(F)$ if and only if there is an order-preserving $\mathbb Z[x,x^{-1}]$-module isomorphism $K_0^{\gr}(L_\K(E)) \cong K_0^{\gr}(L_\K(F))$. A similar conjecture can be written in the analytic setting~\cite{willie}: The graph $C^*$-algebras \(C^*(E)\) and \(C^*(F)\) are equivariant Morita equivalent if and only if $K_0^{\mathbb T}(C^*(E)) \cong K_0^{\mathbb T}(C^*(F))$ as order-preserving $\mathbb Z[x,x^{-1}]$-modules. If under these isomorphisms the order units map to each other, then conjectures assert that these algebras are graded/equivariant isomorphic (\cite{mathann, HazaratDyn2013, haztom}).   We refer the reader to \cite{abconj, AP,guidohom,guidowillie,bilich, carlsen,eilers2,vas} for works on the graded classification conjecture and \cite{willie} for a comprehensive survey.

In this paper we show that these conjectures hold for \emph{small graphs}: graphs on three vertices, no parallel edges, and where the only hereditary and saturated subsets of vertices are the empty set and the whole vertex set. These are the graphs for which the associated Leavitt path algebra is purely infinite simple~\cite{lpabook,AALP}. There are 34 such graphs as follows:

\begin{longtable}{ccccc}
\begin{tikzpicture}[scale=2.0, every node/.style={scale=0.5}]
\fill (0,0)  circle[radius=.6pt];
\fill (-.35,-.61)  circle[radius=.6pt];
\fill (.35,-.61)  circle[radius=.6pt];
\draw (-.5,0) node{\Large $E_1^1:$};
\draw (0,-.15) node{$v_1$};
\draw (-.35+.13,-.535) node{$v_3$};
\draw (.35-.13,-.535) node{$v_2$};
\draw[->, shorten >=5pt] (0,0) to (.35,-.61);
\draw[->, shorten >=5pt] (.35,-.61) to (-.35,-.61);
\draw[->, shorten >=5pt] (-.35,-.61) to (0,0);
\draw[->, shorten >=5pt] (-.35,-.61) to[in=250,out=170, loop] 
(-.35,-.61);

\draw[draw=white,->, shorten >=5pt] (0,0) to[in=130,out=50, loop] (0,0);
\draw[draw=white,->, shorten >=5pt] (.35,-.61) to[in=10,out=-70, loop] (-.35,-.61);
\end{tikzpicture}
&
\begin{tikzpicture}[scale=2.0, every node/.style={scale=0.5}]
\fill (0,0)  circle[radius=.6pt];
\fill (-.35,-.61)  circle[radius=.6pt];
\fill (.35,-.61)  circle[radius=.6pt];
\draw (0,-.15) node{$v_1$};
\draw (-.5, 0) node{\Large $E^1_2:$};
\draw (-.35+.13,-.535) node{$v_3$};
\draw (.35-.13,-.535) node{$v_2$};
\draw[->, shorten >=5pt] (0,0) to (.35,-.61);
\draw[->, shorten >=5pt] (.35,-.61) to (-.35,-.61);
\draw[->, shorten >=5pt] (-.35,-.61) to (0,0);
\draw[->, shorten >=5pt] (0,0) to[in=130,out=50, loop] (0,0);
\draw[->, shorten >=5pt] (-.35,-.61) to[in=250,out=170, loop] (-.35,-.61);

\draw[draw=white, ->, shorten >=5pt] (.35,-.61) to[in=10,out=-70, loop] (-.35,-.61);
\end{tikzpicture}
&
\begin{tikzpicture}[scale=2.0, every node/.style={scale=0.5}]
\fill (0,0)  circle[radius=.6pt];
\fill (-.35,-.61)  circle[radius=.6pt];
\fill (.35,-.61)  circle[radius=.6pt];
\draw (0,-.15) node{$v_1$};
\draw (-.5, 0) node{\Large $E^1_3:$};
\draw (-.35+.13,-.535) node{$v_3$};
\draw (.35-.13,-.535) node{$v_2$};
\draw[->, shorten >=5pt] (0,0) to (.35,-.61);
\draw[->, shorten >=5pt] (.35,-.61) to (-.35,-.61);
\draw[->, shorten >=5pt] (-.35,-.61) to (0,0);
\draw[->, shorten >=5pt] (0,0) to[in=130,out=50, loop] (0,0);
\draw[->, shorten >=5pt] (-.35,-.61) to[in=250,out=170, loop] (-.35,-.61);
\draw[->, shorten >=5pt] (.35,-.61) to[in=10,out=-70, loop] (-.35,-.61);
\end{tikzpicture}  
&
\begin{tikzpicture}[scale=2.0, every node/.style={scale=0.5}]
\fill (0,0)  circle[radius=.6pt];
\fill (-.35,-.61)  circle[radius=.6pt];
\fill (.35,-.61)  circle[radius=.6pt];
\draw (0,-.15) node{$v_1$};
\draw (-.5, 0) node{\Large $E^1_4:$};
\draw (-.35+.13,-.535) node{$v_3$};
\draw (.35-.13,-.535) node{$v_2$};
\draw[->, shorten >=5pt] (0,0) to (.35,-.61);
\draw[->, shorten >=5pt] (.35,-.61) to (-.35,-.61);
\draw[->, shorten >=5pt] (-.35,-.61) to [in=-120,out=-60] (.35,-.61) ;
\draw[->, shorten >=5pt] (-.35,-.61) to (0,0);
\draw[draw=white, ->, shorten >=5pt] (0,0) to[in=130,out=50, loop] (0,0);
\draw[draw=white, ->, shorten >=5pt] (-.35,-.61) to[in=250,out=170, loop] (-.35,-.61);
\draw[draw=white, ->, shorten >=5pt] (.35,-.61) to[in=10,out=-70, loop] (-.35,-.61);
\end{tikzpicture}
&
\begin{tikzpicture}[scale=2.0, every node/.style={scale=0.5}]
\fill (0,0)  circle[radius=.6pt];
\fill (-.35,-.61)  circle[radius=.6pt];
\fill (.35,-.61)  circle[radius=.6pt];
\draw (0,-.15) node{$v_1$};
\draw (-.5, 0) node{\Large $E^1_5:$};
\draw (-.35+.13,-.535) node{$v_3$};
\draw (.35-.13,-.535) node{$v_2$};
\draw[->, shorten >=5pt] (0,0) to (.35,-.61);
\draw[->, shorten >=5pt] (.35,-.61) to (-.35,-.61);
\draw[->, shorten >=5pt] (-.35,-.61) to [in=-120,out=-60] (.35,-.61) ;
\draw[->, shorten >=5pt] (-.35,-.61) to (0,0);
\draw[->, shorten >=5pt] (0,0) to[in=130,out=50, loop] (0,0);

\draw[draw=white, ->, shorten >=5pt] (-.35,-.61) to[in=250,out=170, loop] (-.35,-.61);
\draw[draw=white, ->, shorten >=5pt] (.35,-.61) to[in=10,out=-70, loop] (-.35,-.61);
\end{tikzpicture}
\\
\begin{tikzpicture}[scale=2.0, every node/.style={scale=0.5}]
\fill (0,0)  circle[radius=.6pt];
\fill (-.35,-.61)  circle[radius=.6pt];
\fill (.35,-.61)  circle[radius=.6pt];
\draw (0,-.15) node{$v_1$};
\draw (-.5, 0) node{\Large $E^1_6:$};
\draw (-.35+.13,-.535) node{$v_3$};
\draw (.35-.13,-.535) node{$v_2$};
\draw[->, shorten >=5pt] (0,0) to (.35,-.61);
\draw[->, shorten >=5pt] (.35,-.61) to (-.35,-.61);
\draw[->, shorten >=5pt] (-.35,-.61) to [in=-120,out=-60] (.35,-.61) ;
\draw[->, shorten >=5pt] (-.35,-.61) to (0,0);
\draw[->, shorten >=5pt] (0,0) to[in=130,out=50, loop] (0,0);
\draw[ ->, shorten >=5pt] (-.35,-.61) to [in=250,out=170, loop] (-.35,-.61);

\draw[draw=white, ->, shorten >=5pt] (.35,-.61) to[in=10,out=-70, loop] (-.35,-.61);
\end{tikzpicture}
&
\begin{tikzpicture}[scale=2.0, every node/.style={scale=0.5}]
\fill (0,0)  circle[radius=.6pt];
\fill (-.35,-.61)  circle[radius=.6pt];
\fill (.35,-.61)  circle[radius=.6pt];
\draw (0,-.15) node{$v_1$};
\draw (-.5, 0) node{\Large $E^1_7:$};
\draw (-.35+.13,-.535) node{$v_3$};
\draw (.35-.13,-.535) node{$v_2$};
\draw[->, shorten >=5pt] (0,0) to (.35,-.61);
\draw[->, shorten >=5pt] (.35,-.61) to (-.35,-.61);
\draw[->, shorten >=5pt] (-.35,-.61) to [in=-120,out=-60] (.35,-.61) ;
\draw[->, shorten >=5pt] (-.35,-.61) to (0,0);
\draw[->, shorten >=5pt] (0,0) to[in=130,out=50, loop] (0,0);
\draw[->, shorten >=5pt] (.35,-.61) to [in=10,out=-70, loop] (.35,-.61);
\draw[draw=white, ->, shorten >=5pt] (-.35,-.61) to[in=250,out=170, loop] (-.35,-.61);

\end{tikzpicture}
&
\begin{tikzpicture}[scale=2.0, every node/.style={scale=0.5}]
\fill (0,0)  circle[radius=.6pt];
\fill (-.35,-.61)  circle[radius=.6pt];
\fill (.35,-.61)  circle[radius=.6pt];
\draw (0,-.15) node{$v_1$};
\draw (-.5, 0) node{\Large $E^1_8$:};
\draw (-.35+.13,-.535) node{$v_3$};
\draw (.35-.13,-.535) node{$v_2$};
;
\draw[->, shorten >=5pt] (0,0) to (.35,-.61);
\draw[->, shorten >=5pt] (.35,-.61) to [in=0,out=60] (0,0);
\draw[->, shorten >=5pt] (.35,-.61) to (-.35,-.61);
\draw[->, shorten >=5pt] (-.35,-.61) to [in=-120,out=-60] (.35,-.61) ;
\draw[->, shorten >=5pt] (-.35,-.61) to (0,0);
\draw[->, shorten >=5pt] (0,0) to[in=130,out=50, loop] (0,0);
\draw[->, shorten >=5pt] (-.35,-.61) to[in=250,out=170, loop] (-.35,-.61);
\draw[draw=white,->, shorten >=5pt] (.35,-.61) to[in=10,out=-70, loop] (-.35,-.61);
\end{tikzpicture} 
&
\begin{tikzpicture}[scale=2.0, every node/.style={scale=0.5}]
\fill (0,0)  circle[radius=.6pt];
\fill (-.35,-.61)  circle[radius=.6pt];
\fill (.35,-.61)  circle[radius=.6pt];
\draw (0,-.15) node{$v_1$};
\draw (-.5, 0) node{\Large $E^1_9:$};
\draw (-.35+.13,-.535) node{$v_3$};
\draw (.35-.13,-.535) node{$v_2$};
\draw[->, shorten >=5pt] (0,0) to (.35,-.61);
\draw[->, shorten >=5pt] (.35,-.61) to [in=0,out=60] (0,0);
\draw[->, shorten >=5pt] (.35,-.61) to (-.35,-.61);
\draw[->, shorten >=5pt] (-.35,-.61) to [in=-120,out=-60] (.35,-.61) ;
\draw[->, shorten >=5pt] (-.35,-.61) to (0,0);
\draw[->, shorten >=5pt] (0,0) to[in=130,out=50, loop] (0,0);
\draw[->, shorten >=5pt] (-.35,-.61) to[in=250,out=170, loop] (-.35,-.61);
\draw[->, shorten >=5pt] (.35,-.61) to[in=10,out=-70, loop] (-.35,-.61);
\end{tikzpicture}
&
\begin{tikzpicture}[scale=2.0, every node/.style={scale=0.5}]
\fill (0,0)  circle[radius=.6pt];
\fill (-.35,-.61)  circle[radius=.6pt];
\fill (.35,-.61)  circle[radius=.6pt];
\draw (0,-.15) node{$v_1$};
\draw (-.5, 0) node{\Large $E^1_{10}:$};
\draw (-.35+.13,-.535) node{$v_3$};
\draw (.35-.13,-.535) node{$v_2$};
\draw[->, shorten >=5pt] (0,0) to (.35,-.61);
\draw[->, shorten >=5pt] (.35,-.61) to [in=0,out=60] (0,0);
\draw[->, shorten >=5pt] (-.35,-.61) to (0,0);
\draw[->, shorten >=5pt] (0,0) to [in=120,out=180
] (-.35,-.61) ;
\draw[draw=white, ->, shorten >=5pt] (0,0) to[in=130,out=50, loop] (0,0);
\draw[draw=white, ->, shorten >=5pt] (-.35,-.61) to[in=250,out=170, loop] (-.35,-.61);
\draw[draw=white, ->, shorten >=5pt] (.35,-.61) to[in=10,out=-70, loop] (-.35,-.61);
\end{tikzpicture}
\\
\begin{tikzpicture}[scale=2.0, every node/.style={scale=0.5}]
\fill (0,0)  circle[radius=.6pt];
\fill (-.35,-.61)  circle[radius=.6pt];
\fill (.35,-.61)  circle[radius=.6pt];
\draw (0,-.15) node{$v_1$};
\draw (-.5, 0) node{\Large $E^1_{11}$:};
\draw (-.35+.13,-.535) node{$v_3$};
\draw (.35-.13,-.535) node{$v_2$};
\draw[->, shorten >=5pt] (0,0) to (.35,-.61);
\draw[->, shorten >=5pt] (.35,-.61) to [in=0,out=60] (0,0);
\draw[->, shorten >=5pt] (-.35,-.61) to (0,0);
\draw[->, shorten >=5pt] (0,0) to [in=120,out=180
] (-.35,-.61) ;
\draw[->, shorten >=5pt] (-.35,-.61) to[in=250,out=170, loop] (-.35,-.61);
\draw[draw=white, ->, shorten >=5pt] (0,0) to[in=130,out=50, loop] (0,0);
\draw[draw=white, ->, shorten >=5pt] (.35,-.61) to[in=10,out=-70, loop] (-.35,-.61);
\end{tikzpicture}
&
\begin{tikzpicture}[scale=2.0, every node/.style={scale=0.5}]
\fill (0,0)  circle[radius=.6pt];
\fill (-.35,-.61)  circle[radius=.6pt];
\fill (.35,-.61)  circle[radius=.6pt];
\draw (0,-.15) node{$v_1$};
\draw (-.5, 0) node{\Large $E^1_{12}:$};
\draw (-.35+.13,-.535) node{$v_3$};
\draw (.35-.13,-.535) node{$v_2$};
\draw[->, shorten >=5pt] (0,0) to (.35,-.61);
\draw[->, shorten >=5pt] (.35,-.61) to [in=0,out=60] (0,0);
\draw[->, shorten >=5pt] (-.35,-.61) to (0,0);
\draw[->, shorten >=5pt] (0,0) to [in=120,out=180
] (-.35,-.61) ;
\draw[->, shorten >=5pt] (0,0) to[in=130,out=50, loop] (0,0);
\draw[->, shorten >=5pt] (-.35,-.61) to[in=250,out=170, loop] (-.35,-.61);
\draw[draw=white, ->, shorten >=5pt] (.35,-.61) to[in=10,out=-70, loop] (-.35,-.61);
\end{tikzpicture}
&
\begin{tikzpicture}[scale=2.0, every node/.style={scale=0.5}]
\fill (0,0)  circle[radius=.6pt];
\fill (-.35,-.61)  circle[radius=.6pt];
\fill (.35,-.61)  circle[radius=.6pt];
\draw (0,-.15) node{$v_1$};
\draw (-.5, 0) node{\Large $E^1_{13}:$};
\draw (-.35+.13,-.535) node{$v_3$};
\draw (.35-.13,-.535) node{$v_2$};
\draw[->, shorten >=5pt] (.35,-.61) to (0,0);
\draw[->, shorten >=5pt] (-.35,-.61) to (0,0);
\draw[->, shorten >=5pt] (0,0) to [in=120,out=180
] (-.35,-.61) ;
\draw[->, shorten >=5pt] (-.35,-.61) to [in=250,out=170, loop] (-.35,-.61);
\draw[draw=white, ->, shorten >=5pt] (0,0) to[in=130,out=50, loop] (0,0);
\draw[draw=white, ->, shorten >=5pt] (.35,-.61) to[in=10,out=-70, loop] (-.35,-.61);
\end{tikzpicture}
&
\begin{tikzpicture}[scale=2.0, every node/.style={scale=0.5}]
\fill (0,0)  circle[radius=.6pt];
\fill (-.35,-.61)  circle[radius=.6pt];
\fill (.35,-.61)  circle[radius=.6pt];
\draw (0,-.15) node{$v_1$};
\draw (-.5, 0) node{\Large $E^1_{14}:$};
\draw (-.35+.13,-.535) node{$v_3$};
\draw (.35-.13,-.535) node{$v_2$};
\draw[->, shorten >=5pt] (.35,-.61) to (0,0);
\draw[->, shorten >=5pt] (-.35,-.61) to (0,0);
\draw[->, shorten >=5pt] (0,0) to [in=120,out=180
] (-.35,-.61) ;
\draw[->, shorten >=5pt] (0,0) to[in=130,out=50, loop] (0,0);
\draw[draw=white, ->, shorten >=5pt] (-.35,-.61) to [in=250,out=170, loop] (-.35,-.61);
\draw[draw=white, ->, shorten >=5pt] (-.35,-.61) to[in=250,out=170, loop] (-.35,-.61);
\draw[draw=white, ->, shorten >=5pt] (.35,-.61) to[in=10,out=-70, loop] (-.35,-.61);
\end{tikzpicture}
&
\begin{tikzpicture}[scale=2.0, every node/.style={scale=0.5}]
\fill (0,0)  circle[radius=.6pt];
\fill (-.35,-.61)  circle[radius=.6pt];
\fill (.35,-.61)  circle[radius=.6pt];
\draw (0,-.15) node{$v_1$};
\draw (-.5, 0) node{\Large $E^1_{15}:$};
\draw (-.35+.13,-.535) node{$v_3$};
\draw (.35-.13,-.535) node{$v_2$};
\draw[->, shorten >=5pt] (.35,-.61) to (0,0);
\draw[->, shorten >=5pt] (-.35,-.61) to (0,0);
\draw[->, shorten >=5pt] (0,0) to [in=120,out=180
] (-.35,-.61) ;
\draw[->, shorten >=5pt] (0,0) to[in=130,out=50, loop] (0,0);
\draw[->, shorten >=5pt] (-.35,-.61) to[in=250,out=170, loop] (-.35,-.61);
\draw[draw=white, ->, shorten >=5pt] (.35,-.61) to[in=10,out=-70, loop] (-.35,-.61);
\end{tikzpicture}
\\

\begin{tikzpicture}[scale=2.0, every node/.style={scale=0.5}]
\fill (0,0)  circle[radius=.6pt];
\fill (-.35,-.61)  circle[radius=.6pt];
\fill (.35,-.61)  circle[radius=.6pt];
\draw (0,-.15) node{$v_1$};
\draw (-.5, 0) node{\Large $E^1_{16}:$};
\draw (-.35+.13,-.535) node{$v_3$};
\draw (.35-.13,-.535) node{$v_2$};
\draw[->, shorten >=5pt] (0,0) to (.35,-.61);
\draw[->, shorten >=5pt] (.35,-.61) to (-.35,-.61);
\draw[->, shorten >=5pt] (-.35,-.61) to [in=-120,out=-60] (.35,-.61) ;
\draw[->, shorten >=5pt] (0,0) to (-.35,-.61) ;
\draw[->, shorten >=5pt] (-.35,-.61) to[in=250,out=170, loop] (-.35,-.61);
\draw[draw=white, ->, shorten >=5pt] (0,0) to[in=130,out=50, loop] (0,0);
\draw[draw=white, ->, shorten >=5pt] (.35,-.61) to[in=10,out=-70, loop] (-.35,-.61);
\end{tikzpicture}

&
\begin{tikzpicture}[scale=2.0, every node/.style={scale=0.5}]
\fill (0,0)  circle[radius=.6pt];
\fill (-.35,-.61)  circle[radius=.6pt];
\fill (.35,-.61)  circle[radius=.6pt];
\draw (0,-.15) node{$v_1$};
\draw (-.5, 0) node{\Large $E^1_{17}$:};
\draw (-.35+.13,-.535) node{$v_3$};
\draw (.35-.13,-.535) node{$v_2$};
\draw[->, shorten >=5pt] (0,0) to (.35,-.61);
\draw[->, shorten >=5pt] (.35,-.61) to (-.35,-.61);
\draw[->, shorten >=5pt] (-.35,-.61) to [in=-120,out=-60] (.35,-.61) ;
\draw[->, shorten >=5pt] (0,0) to (-.35,-.61);
\draw[->, shorten >=5pt] (-.35,-.61) to[in=250,out=170, loop] (-.35,-.61);
\draw[->, shorten >=5pt] (.35,-.61) to[in=10,out=-70, loop] (-.35,-.61);
\draw[draw=white, ->, shorten >=5pt] (0,0) to[in=130,out=50, loop] (0,0);
\end{tikzpicture}
&
\begin{tikzpicture}[scale=2.0, every node/.style={scale=0.5}]
\fill (0,0)  circle[radius=.6pt];
\fill (-.35,-.61)  circle[radius=.6pt];
\fill (.35,-.61)  circle[radius=.6pt];
\draw (0,-.15) node{$v_1$};
\draw (-.5, 0) node{\Large $E^1_{18}$};
\draw (-.35+.13,-.535) node{$v_3$};
\draw (.35-.13,-.535) node{$v_2$};
\draw[->, shorten >=5pt] (0,0) to (.35,-.61);
\draw[->, shorten >=5pt] (.35,-.61) to (-.35,-.61);
\draw[->, shorten >=5pt] (-.35,-.61) to [in=-120,out=-60] (.35,-.61) ;
\draw[->, shorten >=5pt] (-.35,-.61) to (0,0);
\draw[->, shorten >=5pt] (0,0) to[in=130,out=50, loop] (0,0);
\draw[->, shorten >=5pt] (-.35,-.61) to[in=250,out=170, loop] (-.35,-.61);
\draw[->, shorten >=5pt] (.35,-.61) to[in=10,out=-70, loop] (-.35,-.61);
\end{tikzpicture}
&
\begin{tikzpicture}[scale=2.0, every node/.style={scale=0.5}]
\fill (0,0)  circle[radius=.6pt];
\fill (-.35,-.61)  circle[radius=.6pt];
\fill (.35,-.61)  circle[radius=.6pt];
\draw (0,-.15) node{$v_1$};
\draw (-.5, 0) node{\Large $E^2_1$:};
\draw (-.35+.13,-.535) node{$v_3$};
\draw (.35-.13,-.535) node{$v_2$};
\draw[->, shorten >=5pt] (0,0) to (.35,-.61);
\draw[->, shorten >=5pt] (.35,-.61) to (-.35,-.61);
\draw[->, shorten >=5pt] (-.35,-.61) to [in=-120,out=-60] (.35,-.61) ;
\draw[->, shorten >=5pt] (-.35,-.61) to (0,0);
\draw[->, shorten >=5pt] (-.35,-.61) to[in=250,out=170, loop] (-.35,-.61);
\draw[->, shorten >=5pt] (.35,-.61) to[in=10,out=-70, loop] (-.35,-.61);
\draw[draw=white, ->, shorten >=5pt] (0,0) to[in=130,out=50, loop] (0,0);
\end{tikzpicture}
&
\begin{tikzpicture}[scale=2.0, every node/.style={scale=0.5}]
\fill (0,0)  circle[radius=.6pt];
\fill (-.35,-.61)  circle[radius=.6pt];
\fill (.35,-.61)  circle[radius=.6pt];
\draw (0,-.15) node{$v_1$};
\draw (-.5, 0) node{\Large $E^2_2$};
\draw (-.35+.13,-.535) node{$v_3$};
\draw (.35-.13,-.535) node{$v_2$};
\draw[->, shorten >=5pt] (0,0) to (.35,-.61);
\draw[->, shorten >=5pt] (.35,-.61) to [in=0,out=60] (0,0);
\draw[->, shorten >=5pt] (.35,-.61) to (-.35,-.61);
\draw[->, shorten >=5pt] (-.35,-.61) to [in=-120,out=-60] (.35,-.61) ;
\draw[->, shorten >=5pt] (-.35,-.61) to (0,0);
\draw[draw=white, ->, shorten >=5pt] (0,0) to[in=130,out=50, loop] (0,0);
\draw[draw=white, ->, shorten >=5pt] (-.35,-.61) to[in=250,out=170, loop] (-.35,-.61);
\draw[draw=white, ->, shorten >=5pt] (.35,-.61) to[in=10,out=-70, loop] (-.35,-.61);
\end{tikzpicture}
\\

\begin{tikzpicture}[scale=2.0, every node/.style={scale=0.5}]
\fill (0,0)  circle[radius=.6pt];
\fill (-.35,-.61)  circle[radius=.6pt];
\fill (.35,-.61)  circle[radius=.6pt];
\draw (0,-.15) node{$v_1$};
\draw (-.5, 0) node{\Large $E^2_3:$};
\draw (-.35+.13,-.535) node{$v_3$};
\draw (.35-.13,-.535) node{$v_2$};
\draw[->, shorten >=5pt] (0,0) to (.35,-.61);
\draw[->, shorten >=5pt] (.35,-.61) to [in=0,out=60] (0,0);
\draw[->, shorten >=5pt] (.35,-.61) to (-.35,-.61);
\draw[->, shorten >=5pt] (-.35,-.61) to [in=-120,out=-60] (.35,-.61) ;
\draw[->, shorten >=5pt] (-.35,-.61) to (0,0);
\draw[->, shorten >=5pt] (-.35,-.61) to[in=250,out=170, loop] (-.35,-.61);
\draw[draw=white, ->, shorten >=5pt] (0,0) to[in=130,out=50, loop] (0,0);
\draw[draw=white, ->, shorten >=5pt] (.35,-.61) to[in=10,out=-70, loop] (-.35,-.61);
\end{tikzpicture}
&
\begin{tikzpicture}[scale=2.0, every node/.style={scale=0.5}]
\fill (0,0)  circle[radius=.6pt];
\fill (-.35,-.61)  circle[radius=.6pt];
\fill (.35,-.61)  circle[radius=.6pt];
\draw (0,-.15) node{$v_1$};
\draw (-.5, 0) node{\Large $E^2_4:$};
\draw (-.35+.13,-.535) node{$v_3$};
\draw (.35-.13,-.535) node{$v_2$};
\draw[->, shorten >=5pt] (0,0) to (.35,-.61);
\draw[->, shorten >=5pt] (.35,-.61) to [in=0,out=60] (0,0);
\draw[->, shorten >=5pt] (.35,-.61) to (-.35,-.61);
\draw[->, shorten >=5pt] (-.35,-.61) to [in=-120,out=-60] (.35,-.61) ;
\draw[->, shorten >=5pt] (-.35,-.61) to (0,0);
\draw[->, shorten >=5pt] (0,0) to[in=130,out=50, loop] (0,0);
\draw[draw=white, ->, shorten >=5pt] (-.35,-.61) to[in=250,out=170, loop] (-.35,-.61);
\draw[draw=white, ->, shorten >=5pt] (.35,-.61) to[in=10,out=-70, loop] (-.35,-.61);
\end{tikzpicture}
&

\begin{tikzpicture}[scale=2.0, every node/.style={scale=0.5}]
\fill (0,0)  circle[radius=.6pt];
\fill (-.35,-.61)  circle[radius=.6pt];
\fill (.35,-.61)  circle[radius=.6pt];
\draw (0,-.15) node{$v_1$};
\draw (-.5, 0) node{\Large $E^2_5:$};
\draw (-.35+.13,-.535) node{$v_3$};
\draw (.35-.13,-.535) node{$v_2$};
\draw[->, shorten >=5pt] (0,0) to (.35,-.61);
\draw[->, shorten >=5pt] (.35,-.61) to (-.35,-.61);
\draw[->, shorten >=5pt] (-.35,-.61) to [in=-120,out=-60] (.35,-.61) ;
\draw[->, shorten >=5pt] (-.35,-.61) to (0,0);
\draw[->, shorten >=5pt] (.35,-.61) to[in=10,out=-70, loop] (-.35,-.61);
\draw[draw=white, ->, shorten >=5pt] (0,0) to[in=130,out=50, loop] (0,0);
\draw[draw=white, ->, shorten >=5pt] (-.35,-.61) to[in=250,out=170, loop] (-.35,-.61);
\end{tikzpicture}
&
\begin{tikzpicture}[scale=2.0, every node/.style={scale=0.5}]
\fill (0,0)  circle[radius=.6pt];
\fill (-.35,-.61)  circle[radius=.6pt];
\fill (.35,-.61)  circle[radius=.6pt];
\draw (0,-.15) node{$v_1$};
\draw (-.5, 0) node{\Large $E^2_6:$};
\draw (-.35+.13,-.535) node{$v_3$};
\draw (.35-.13,-.535) node{$v_2$};
\draw[->, shorten >=5pt] (0,0) to (.35,-.61);
\draw[->, shorten >=5pt] (.35,-.61) to [in=0,out=60] (0,0);
\draw[->, shorten >=5pt] (.35,-.61) to (-.35,-.61);
\draw[->, shorten >=5pt] (-.35,-.61) to [in=-120,out=-60] (.35,-.61) ;
\draw[->, shorten >=5pt] (-.35,-.61) to (0,0);
\draw[->, shorten >=5pt] (0,0) to[in=130,out=50, loop] (0,0);
\draw[draw=white, ->, shorten >=5pt] (-.35,-.61) to[in=250,out=170, loop] (-.35,-.61);
\draw[->, shorten >=5pt] (.35,-.61) to[in=10,out=-70, loop] (-.35,-.61);
\end{tikzpicture}

&
\begin{tikzpicture}[scale=2.0, every node/.style={scale=0.5}]
\fill (0,0)  circle[radius=.6pt];
\fill (-.35,-.61)  circle[radius=.6pt];
\fill (.35,-.61)  circle[radius=.6pt];
\draw (0,-.15) node{$v_1$};
\draw (-.5, 0) node{\Large $E^3_1:$};
\draw (-.35+.13,-.535) node{$v_3$};
\draw (.35-.13,-.535) node{$v_2$};
\draw[->, shorten >=5pt] (0,0) to (.35,-.61);
\draw[->, shorten >=5pt] (.35,-.61) to (-.35,-.61);
\draw[->, shorten >=5pt] (-.35,-.61) to [in=-120,out=-60] (.35,-.61) ;
\draw[->, shorten >=5pt] (-.35,-.61) to (0,0);
\draw[->, shorten >=5pt] (-.35,-.61) to[in=250,out=170, loop] (-.35,-.61);
\draw[draw=white, ->, shorten >=5pt] (0,0) to[in=130,out=50, loop] (0,0);
\draw[draw=white, ->, shorten >=5pt] (.35,-.61) to[in=10,out=-70, loop] (-.35,-.61);
\end{tikzpicture}
\\

\begin{tikzpicture}[scale=2.0, every node/.style={scale=0.5}]
\fill (0,0)  circle[radius=.6pt];
\fill (-.35,-.61)  circle[radius=.6pt];
\fill (.35,-.61)  circle[radius=.6pt];
\draw (0,-.15) node{$v_1$};
\draw (-.5, 0) node{\Large $E^3_2:$};
\draw (-.35+.13,-.535) node{$v_3$};
\draw (.35-.13,-.535) node{$v_2$};
\draw[->, shorten >=5pt] (0,0) to (.35,-.61);
\draw[->, shorten >=5pt] (.35,-.61) to [in=0,out=60] (0,0);
\draw[->, shorten >=5pt] (.35,-.61) to (-.35,-.61);
\draw[->, shorten >=5pt] (-.35,-.61) to [in=-120,out=-60] (.35,-.61) ;
\draw[->, shorten >=5pt] (-.35,-.61) to (0,0);
\draw[->, shorten >=5pt] (-.35,-.61) to[in=250,out=170, loop] (-.35,-.61);
\draw[->, shorten >=5pt] (.35,-.61) to[in=10,out=-70, loop] (-.35,-.61);
\draw[draw=white, ->, shorten >=5pt] (0,0) to[in=130,out=50, loop] (0,0);
\end{tikzpicture}
&
\begin{tikzpicture}[scale=2.0, every node/.style={scale=0.5}]
\fill (0,0)  circle[radius=.6pt];
\fill (-.35,-.61)  circle[radius=.6pt];
\fill (.35,-.61)  circle[radius=.6pt];
\draw (0,-.15) node{$v_1$};
\draw (-.5, 0) node{\Large $E^3_3:$};
\draw (-.35+.13,-.535) node{$v_3$};
\draw (.35-.13,-.535) node{$v_2$};
\draw[->, shorten >=5pt] (0,0) to (.35,-.61);
\draw[->, shorten >=5pt] (.35,-.61) to [in=0,out=60] (0,0);
\draw[->, shorten >=5pt] (.35,-.61) to (-.35,-.61);
\draw[->, shorten >=5pt] (-.35,-.61) to [in=-120,out=-60] (.35,-.61) ;
\draw[->, shorten >=5pt] (-.35,-.61) to (0,0);
\draw[->, shorten >=5pt] (0,0) to [in=120,out=180
] (-.35,-.61) ;
\draw[->, shorten >=5pt] (0,0) to[in=130,out=50, loop] (0,0);
\draw[->, shorten >=5pt] (-.35,-.61) to[in=250,out=170, loop] (-.35,-.61);
\draw[->, shorten >=5pt] (.35,-.61) to[in=10,out=-70, loop] (-.35,-.61);
\end{tikzpicture}
&
\begin{tikzpicture}[scale=2.0, every node/.style={scale=0.5}]
\fill (0,0)  circle[radius=.6pt];
\fill (-.35,-.61)  circle[radius=.6pt];
\fill (.35,-.61)  circle[radius=.6pt];
\draw (0,-.15) node{$v_1$};
\draw (-.5, 0) node{\Large $E^3_4:$};
\draw (-.35+.13,-.535) node{$v_3$};
\draw (.35-.13,-.535) node{$v_2$};
\draw[->, shorten >=5pt] (0,0) to (.35,-.61);
\draw[->, shorten >=5pt] (.35,-.61) to [in=0,out=60] (0,0);
\draw[->, shorten >=5pt] (-.35,-.61) to (0,0);
\draw[->, shorten >=5pt] (0,0) to [in=120,out=180
] (-.35,-.61) ;
\draw[->, shorten >=5pt] (0,0) to[in=130,out=50, loop] (0,0);
\draw[draw=white, ->, shorten >=5pt] (-.35,-.61) to[in=250,out=170, loop] (-.35,-.61);
\draw[draw=white, ->, shorten >=5pt] (.35,-.61) to[in=10,out=-70, loop] (-.35,-.61);
\end{tikzpicture}
&
\begin{tikzpicture}[scale=2.0, every node/.style={scale=0.5}]
\fill (0,0)  circle[radius=.6pt];
\fill (-.35,-.61)  circle[radius=.6pt];
\fill (.35,-.61)  circle[radius=.6pt];
\draw (0,-.15) node{$v_1$};
\draw (-.5, 0) node{\Large $E^4_1:$};
\draw (-.35+.13,-.535) node{$v_3$};
\draw (.35-.13,-.535) node{$v_2$};
\draw[->, shorten >=5pt] (0,0) to (.35,-.61);
\draw[->, shorten >=5pt] (.35,-.61) to [in=0,out=60] (0,0);
\draw[->, shorten >=5pt] (.35,-.61) to (-.35,-.61);
\draw[->, shorten >=5pt] (-.35,-.61) to [in=-120,out=-60] (.35,-.61) ;
\draw[->, shorten >=5pt] (-.35,-.61) to (0,0);
\draw[->, shorten >=5pt] (.35,-.61) to[in=10,out=-70, loop] (-.35,-.61);
\draw[draw=white, ->, shorten >=5pt] (0,0) to[in=130,out=50, loop] (0,0);
\draw[draw=white, ->, shorten >=5pt] (-.35,-.61) to[in=250,out=170, loop] (-.35,-.61);
\end{tikzpicture}
&
\begin{tikzpicture}[scale=2.0, every node/.style={scale=0.5}]
\fill (0,0)  circle[radius=.6pt];
\fill (-.35,-.61)  circle[radius=.6pt];
\fill (.35,-.61)  circle[radius=.6pt];
\draw (0,-.15) node{$v_1$};
\draw (-.5, 0) node{\Large $E^4_2:$};
\draw (-.35+.13,-.535) node{$v_3$};
\draw (.35-.13,-.535) node{$v_2$};
\draw[->, shorten >=5pt] (0,0) to (.35,-.61);
\draw[->, shorten >=5pt] (.35,-.61) to [in=0,out=60] (0,0);
\draw[->, shorten >=5pt] (.35,-.61) to (-.35,-.61);
\draw[->, shorten >=5pt] (-.35,-.61) to [in=-120,out=-60] (.35,-.61) ;
\draw[->, shorten >=5pt] (-.35,-.61) to (0,0);
\draw[->, shorten >=5pt] (0,0) to [in=120,out=180
] (-.35,-.61) ;
\draw[->, shorten >=5pt] (0,0) to[in=130,out=50, loop] (0,0);
\draw[->, shorten >=5pt] (.35,-.61) to[in=10,out=-70, loop] (.35,-.61);
\draw[draw=white, ->, shorten >=5pt] (-.35,-.61) to[in=250,out=170, loop] (-.35,-.61);
\end{tikzpicture}
\\

\begin{tikzpicture}[scale=2.0, every node/.style={scale=0.5}]
\fill (0,0)  circle[radius=.6pt];
\fill (-.35,-.61)  circle[radius=.6pt];
\fill (.35,-.61)  circle[radius=.6pt];
\draw (0,-.15) node{$v_1$};
\draw (-.5, 0) node{\Large $E^5_1:$};
\draw (-.35+.13,-.535) node{$v_3$};
\draw (.35-.13,-.535) node{$v_2$};
\draw[->, shorten >=5pt] (0,0) to (.35,-.61);
\draw[->, shorten >=5pt] (.35,-.61) to [in=0,out=60] (0,0);
\draw[->, shorten >=5pt] (.35,-.61) to (-.35,-.61);
\draw[->, shorten >=5pt] (-.35,-.61) to [in=-120,out=-60] (.35,-.61) ;
\draw[->, shorten >=5pt] (-.35,-.61) to (0,0);
\draw[->, shorten >=5pt] (0,0) to [in=120,out=180
] (-.35,-.61) ;
\draw[->, shorten >=5pt] (0,0) to[in=130,out=50, loop] (0,0);
\draw[draw=white, ->, shorten >=5pt] (.35,-.61) to[in=10,out=-70, loop]  (.35,-.61) ; 
\draw[draw=white, ->, shorten >=5pt] (-.35,-.61) to[in=250,out=170, loop] (-.35,-.61);
\draw[draw=white, ->, shorten >=5pt] (.35,-.61) to[in=10,out=-70, loop] (-.35,-.61);
\end{tikzpicture}
&
\begin{tikzpicture}[scale=2.0, every node/.style={scale=0.5}]
\fill (0,0)  circle[radius=.6pt];
\fill (-.35,-.61)  circle[radius=.6pt];
\fill (.35,-.61)  circle[radius=.6pt];
\draw (0,-.15) node{$v_1$};
\draw (-.5, 0) node{\Large $E^6_1$};
\draw (-.35+.13,-.535) node{$v_3$};
\draw (.35-.13,-.535) node{$v_2$};
\draw[->, shorten >=5pt] (0,0) to (.35,-.61);
\draw[->, shorten >=5pt] (.35,-.61) to [in=0,out=60] (0,0);
\draw[->, shorten >=5pt] (.35,-.61) to (-.35,-.61);
\draw[->, shorten >=5pt] (-.35,-.61) to [in=-120,out=-60] (.35,-.61) ;
\draw[->, shorten >=5pt] (-.35,-.61) to (0,0);
\draw[->, shorten >=5pt] (0,0) to [in=120,out=180
] (-.35,-.61) ;
\draw[draw=white, ->, shorten >=5pt] (0,0) to[in=130,out=50, loop] (0,0);
\draw[draw=white, ->, shorten >=5pt] (-.35,-.61) to[in=250,out=170, loop] (-.35,-.61);
\draw[draw=white, ->, shorten >=5pt] (.35,-.61) to[in=10,out=-70, loop] (-.35,-.61); 
\end{tikzpicture}
&
\begin{tikzpicture}[scale=2.0, every node/.style={scale=0.5}]
\fill (0,0)  circle[radius=.6pt];
\fill (-.35,-.61)  circle[radius=.6pt];
\fill (.35,-.61)  circle[radius=.6pt];
\draw (0,-.15) node{$v_1$};
\draw (-.5, 0) node{\Large $E^7_1:$};
\draw (-.35+.13,-.535) node{$v_3$};
\draw (.35-.13,-.535) node{$v_2$};
\draw[->, shorten >=5pt] (0,0) to (.35,-.61);
\draw[->, shorten >=5pt] (.35,-.61) to [in=0,out=60] (0,0);
\draw[->, shorten >=5pt] (-.35,-.61) to (0,0);
\draw[->, shorten >=5pt] (0,0) to [in=120,out=180
] (-.35,-.61) ;
\draw[->, shorten >=5pt] (-.35,-.61) to[in=250,out=170, loop] (-.35,-.61);
\draw[->, shorten >=5pt] (.35,-.61) to[in=10,out=-70, loop] (-.35,-.61);
\draw[draw=white, ->, shorten >=5pt] (0,0) to[in=130,out=50, loop] (0,0);
\end{tikzpicture}
&
\begin{tikzpicture}[scale=2.0, every node/.style={scale=0.5}]
\fill (0,0)  circle[radius=.6pt];
\fill (-.35,-.61)  circle[radius=.6pt];
\fill (.35,-.61)  circle[radius=.6pt];
\draw (0,-.15) node{$v_1$};
\draw (-.5, 0) node{\Large $E^7_2:$};
\draw (-.35+.13,-.535) node{$v_3$};
\draw (.35-.13,-.535) node{$v_2$};
\draw[->, shorten >=5pt] (0,0) to (.35,-.61);
\draw[->, shorten >=5pt] (.35,-.61) to [in=0,out=60] (0,0);
\draw[->, shorten >=5pt] (-.35,-.61) to (0,0);
\draw[->, shorten >=5pt] (0,0) to [in=120,out=180
] (-.35,-.61) ;
\draw[->, shorten >=5pt] (0,0) to[in=130,out=50, loop] (0,0);
\draw[->, shorten >=5pt] (-.35,-.61) to[in=250,out=170, loop] (-.35,-.61);
\draw[->, shorten >=5pt] (.35,-.61) to[in=10,out=-70, loop] (-.35,-.61);
\end{tikzpicture}
& \\
\end{longtable}

For a graph $E$, we denote its adjacency matrix by $A_E$. We establish what might have been expected: 

\begin{thm}\label{alergy1}
    Let \(E\) and \(F\) be small graphs.    Then, the following are equivalent:
    \begin{enumerate}[\upshape(1)]
       
        \item The adjacency matrices  \(A_E\) and \(A_F\) are shift equivalent;
        
        \item The adjacency matrices \(A_E\) and \(A_F\) are strong shift equivalent;

 \item The Leavitt path algebras \(L_\K(E)\) and \(L_\K(F)\) are graded Morita equivalent;
 
 \item The graph $C^*$-algebras \(C^*(E)\) and \(C^*(F)\) are equivariant Morita equivalent;
      
          \item There is an order-preserving 
        $\mathbb Z[x,x^{-1}]$-module isomorphism
$K_0^{\gr}(L_\K(E))\rightarrow K_0^{\gr}(L_\K(F))$; 
        \item The talented monoids \(T_E\) and \(T_F\) are \(\mathbb{Z}\)-isomorphic.
    \end{enumerate}
       
\end{thm}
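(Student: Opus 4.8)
\emph{Plan.} The plan is to collapse the six conditions onto a single implication and then to verify that implication by hand over the explicit list of the $34$ small graphs. For arbitrary finite graphs without sinks the following are already known (see \cite{willie} and the references therein): $(2)\Rightarrow(1)$, that strong shift equivalence implies shift equivalence; $(2)\Rightarrow(3)$ and $(2)\Rightarrow(4)$, because strong shift equivalence of adjacency matrices is generated by in- and out-splittings of the underlying graphs, and each such graph move induces a graded, respectively $\TT$-equivariant, Morita equivalence of the associated Leavitt path / graph $C^{*}$-algebras; $(3)\Rightarrow(5)$ and $(4)\Rightarrow(5)$, since the ordered $\ZZ[x,x^{-1}]$-module $K_0^{\gr}$ is a graded-Morita invariant and $K_0^{\TT}(C^{*}(E))\cong K_0^{\gr}(L_\K(E))$; and $(1)\Leftrightarrow(5)\Leftrightarrow(6)$, since $K_0^{\gr}(L_\K(E))$ is the group completion of the talented monoid $T_E$ and both are incarnations of Krieger's dimension triple $\bigl(\Delta_{A_E},\delta_{A_E},\Delta_{A_E}^{+}\bigr)$, which is a complete invariant of shift equivalence. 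Chasing these arrows, one checks that the conditions (1)--(6) all become equivalent for small graphs as soon as $(1)\Rightarrow(2)$ is established for them. Thus the entire theorem reduces to the \emph{Williams implication for small graphs}: two shift-equivalent small graphs are strong shift equivalent.

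To prove $(1)\Rightarrow(2)$ for small graphs I would work in two stages through the $34$-graph list. \emph{Stage 1: the shift-equivalence partition.} For each graph $E$, compute the talented monoid $T_E$ together with its $\ZZ$-action --- equivalently the ordered $\ZZ[x,x^{-1}]$-module $K_0^{\gr}(L_\K(E))$, equivalently Krieger's dimension triple, whose underlying group is $\Delta_{A_E}=\varinjlim\bigl(\ZZ^{3}\xrightarrow{A_E^{t}}\ZZ^{3}\xrightarrow{A_E^{t}}\cdots\bigr)$ --- and, as the matrices are $3\times 3$ with small entries, determine which of these invariants are isomorphic. This sorts the $34$ graphs into their shift-equivalence classes. \emph{Stage 2: realising each class by strong shift equivalences.} For each class, exhibit for every member a finite sequence of out-splittings, in-splittings of type $I^{+}$, and their inverses (amalgamations) carrying it to a fixed representative of that class; this is exactly the data encoded in the ``Out-splits to'' and ``In-splits $I^{+}$ to'' move diagrams. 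Every such step is an elementary strong shift equivalence, so all members of a class are mutually strong shift equivalent. Since strong shift equivalence always implies shift equivalence, the strong-shift-equivalence partition automatically refines the shift-equivalence partition of Stage 1; Stage 2 shows it is no finer, so the two partitions coincide --- which is precisely $(1)\Rightarrow(2)$ for small graphs.

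Two points need honest bookkeeping. First, the chains in Stage 2 will in general leave the class of small graphs: the intermediate graphs may gain a fourth vertex, parallel edges, or vertices with a single out-edge. This is harmless, since strong shift equivalence and graded Morita equivalence make sense for all finite graphs, but every intermediate graph and every move between consecutive ones must actually be displayed and checked, which is why auxiliary four-vertex graphs enter the picture. Second, one should confirm at each step that the move used induces a genuinely \emph{graded} Morita equivalence of Leavitt path algebras (and, on the $C^{*}$ side, a \emph{$\TT$-equivariant} one), so that the appeal to $(2)\Rightarrow(3),(4)$ is legitimate for the moves that actually occur; this is where it matters that only splittings and amalgamations --- and not, for instance, a Cuntz splice --- appear in the chains.

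The real difficulty is Stage 2. There is no general bound on the length, or even on the ``height'' --- the size of the largest intermediate graph one must pass through --- of a chain of splittings and amalgamations between two shift-equivalent graphs; this is the crux of the still-open Williams problem, and for small graphs it can only be settled by explicit construction, class by class. I expect most of the $34$ graphs to lie in singleton or very small shift-equivalence classes, where $(1)\Rightarrow(2)$ is vacuous, with the real effort concentrated in the few classes that contain combinatorially dissimilar graphs whose adjacency matrices share a dimension triple but exhibit no obvious strong shift equivalence; building the connecting chains for those, and making sure that Stage 1 has neither wrongly separated nor wrongly merged a class --- an error there would leave Stage 2 impossible or incomplete --- is where the labour lies. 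A milder, purely technical hazard is that deciding whether two explicitly computed $\ZZ$-graded monoids are isomorphic is not completely mechanical and must be argued with some care in a few cases.
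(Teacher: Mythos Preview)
Your proposal is correct and follows essentially the same route as the paper: reduce everything to the Williams implication $(1)\Rightarrow(2)$ via the known general implications, then partition the $34$ graphs by their Krieger dimension data and exhibit explicit strong shift equivalences within each class. Two small points where the paper's execution differs from your sketch: first, in Stage~1 the paper does not rely solely on computing and comparing dimension triples, but uses the Perron--Frobenius eigenvalue (Proposition~\ref{marcusbks}) together with the ungraded $K_0$ via~(\ref{lol153332}) as cheap separators, which spares some delicate monoid-isomorphism arguments; second, in Stage~2 the paper uses, in addition to in/out-splittings, source addition/elimination (for $E^1_{13},E^1_{14},E^1_{15},E^1_{16},E^1_{17}$), which you should add to your list of allowed moves, since several of the small graphs contain sources that cannot be removed by splitting alone.
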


This paper was motivated by the work~\cite{AALP}, where the following 20 year old open question is investigated:

\begin{ques} [The Classification Question for purely infinite simple unital Leavitt path algebras] \label{hfhfgd}Suppose $E$ and $F$ are graphs for which $L_\K(E)$ and $L_\K(F)$ are purely infinite simple unital. If $K_0(L_\K(E)) \cong  K_0(L_\K(F))$ via an isomorphism $\phi$ having $\phi([L_\K(E)]) = [L(F)]$, must $L_\K(E)$ and $L_\K(F)$ be isomorphic?
\end{ques}

In \cite{AALP} it was shown  that Question~\ref{hfhfgd} holds for the table of small graphs above. In \cite{AALP} these small graphs have been divided in seven groups along the $K$-theory data  $\big(K_0(L_\K(E)),[L(E)]\big)$. It was then shown that the Leavitt path algebras within each of these groups are indeed pairwise isomorphic, thus affirming Question~\ref{hfhfgd}.  The parallel graph $C^*$-algebra version of Question~\ref{hfhfgd} for small graphs, had already been answered in positive in early 1980's by Enomoto, Fujii and Watatani (see~\cite{enomoto, watatani}).

 The graded Grothendieck group  $K_0^{\gr}$ is a finer invariant than the Grothendieck group $K_0$ as the following commutative diagram shows: 
\begin{equation}\label{lol153332}\tag{GK-K}
\xymatrix@=15pt{
K^{\gr}_0(L_\K(E)) \ar[d]^{\psi} \ar[r]^{\phi} &
K^{\gr}_0(L_\K(E)) \ar[d]^{\psi} \ar[r]^{U}& K_0(L_\K(E))\ar[d] \ar[r] &0\\
K^{\gr}_0(L_\K(F)) \ar[r]^{\phi}  &
K^{\gr}_0(L_\K(F)) \ar[r]^{U}& K_0(L_\K(F)) \ar[r] &0,}
\end{equation}
Here the map $\phi$ sends $[P]\in K^{\gr}_0(L_\K(E))$ to $[P(1)]-[P]$, where $P(1)$ is the shift of the module by $1$ and $U$ is the homomorphism induced by the forgetful functor $\mbox{Gr-}A\rightarrow \mbox{Mod-}A$ (see~\S\ref{algpre}).  It follows that if $K^{\gr}_0(L_\K(E)) \cong K^{\gr}_0(L_\K(F))$ as $\mathbb Z[x,x^{-1}]$-modules, then 
$K_0(L_\K(E))\cong K_0(L_\K(F))$. This allows us to start with the seven groups of graphs in \cite{AALP}, and further refine the pairs along the notion of shift equivalence. 

The strategy to prove Theorem~\ref{alergy1} is simple: We calculate the graded Grothendieck groups $K_0^{\gr}$, of Leavitt path algebras associated to these graphs. If for two graphs these 
$K_0^{\gr}$-groups coincide, it is known that their adjacency matrices are shift equivalent. We then prove that these graphs can be obtained from the in-split/out-split graph moves. Williams' theorem then guarantees these matrices are strong shift equivalent. This also allows us to show that their Leavitt path algebras are graded Morita equivalent. 

Although the graphs are ``small'', they produce very interesting cases. We show that, except one case, shift equivalence among these graphs implies elementary shift equivalence.   We further investigate the second Graded Classification Conjecture and prove that it holds for this class of graphs.

\begin{thm} \label{alergy2}
    Let \(E\) and \(F\) be small graphs.  Then, the following are equivalent.
    \begin{enumerate}[\upshape(1)]
       
      \item The Leavitt path algebras \(L_\K(E)\) and \(L_\K(F)\) are graded isomorphic;
      
          \item There is a pointed order-preserving 
        $\mathbb Z[x,x^{-1}]$-module isomorphism
$K_0^{\gr}(L_\K(E))\rightarrow K_0^{\gr}(L_\K(F))$.
        
 \end{enumerate}
       
\end{thm}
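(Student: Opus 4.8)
The implication (1)$\Rightarrow$(2) is the routine functorial direction: a graded $\K$-algebra isomorphism $\varphi\colon L_\K(E)\xrightarrow{\ \sim\ }L_\K(F)$ induces $\varphi_\ast\colon K_0^{\gr}(L_\K(E))\to K_0^{\gr}(L_\K(F))$, which commutes with the shift (hence is $\mathbb Z[x,x^{-1}]$-linear), carries classes of graded finitely generated projective modules to such classes (hence is order-preserving), and sends $[L_\K(E)]$ to $[\varphi(L_\K(E))]=[L_\K(F)]$ (hence is pointed). So the entire content is in (2)$\Rightarrow$(1).

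For (2)$\Rightarrow$(1) I would first discard the order unit and apply Theorem~\ref{alergy1}: an isomorphism as in (2) is in particular an order-preserving $\mathbb Z[x,x^{-1}]$-module isomorphism, so $A_E$ and $A_F$ are strong shift equivalent and, by the proof of Theorem~\ref{alergy1}, $E$ and $F$ are joined by a finite chain of in- and out-splittings; at the level of algebras this only yields a graded \emph{Morita} equivalence $L_\K(E)\sim_{\gr}L_\K(F)$. What Theorem~\ref{alergy1} does not see is the position of the distinguished element $[L_\K(\cdot)]$, and hypothesis (2) is exactly what restores it. Note that any graded isomorphism between the (unital) algebras $L_\K(E)$ and $L_\K(F)$ is automatically unital, so the pointedness in (2) is a genuinely necessary condition; the theorem asserts it is also sufficient for small graphs. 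The plan is to separate the graph moves according to their effect on the identity element: an out-splitting $E\to E_s$ lifts to an honest graded isomorphism $L_\K(E)\cong_{\gr}L_\K(E_s)$ (which, being unital, preserves the order unit of $K_0^{\gr}$), and so does its reverse, out-amalgamation; a general in-splitting yields only a graded Morita equivalence, but the restricted in-splittings marked ``$I^+$'' in the key do give graded isomorphisms. The goal then becomes: any two small graphs with isomorphic pointed invariant $\big(K_0^{\gr}(L_\K(\cdot)),[L_\K(\cdot)]\big)$ are connected using out-splittings, out-amalgamations and $I^+$-in-splittings alone.

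To carry this out I would: \emph{(a)} compute the pair $\big(K_0^{\gr}(L_\K(E)),[L_\K(E)]\big)$ for each of the $34$ graphs---in the standard presentation this is the $\mathbb Z[x,x^{-1}]$-module $\coker\!\big(I-xA_E^{\top}\big)$ together with the image of $(1,1,1)$---and sort the graphs into classes under pointed isomorphism, noting that this partition refines both the seven $K_0$-groups of \cite{AALP} and the $K_0^{\gr}$-classes underlying Theorem~\ref{alergy1}; \emph{(b)} inside each class exhibit an explicit chain of the above ``graded'' moves---this is what the auxiliary graphs drawn alongside $E^1_8$, $E^1_{12}$, \ldots\ in the tables are for---and read off the resulting pointed graded isomorphism of Leavitt path algebras; \emph{(c)} observe that each such chain is valid verbatim for graph $C^*$-algebras, so the equivariant statement needs no separate treatment. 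For a class that cannot be traversed by these moves, the fallback is the principle that graded finitely generated projective modules over a purely infinite simple Leavitt path algebra are classified by their class in $K_0^{\gr}$---the talented-monoid analogue of the fact that $V(A)=\{0\}\sqcup K_0(A)$ for a purely infinite simple unital ring $A$---which promotes the Morita equivalence of Theorem~\ref{alergy1} to a graded isomorphism once the order units are matched; for the finitely many algebras at hand this can be verified directly from the explicit description of the talented monoid $T_E$.

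The step I expect to be the real obstacle is \emph{(b)} in the exceptional case mentioned above, where shift equivalence of the graphs involved does not arise from elementary shift equivalence: there the naive out-splitting/$I^+$-in-splitting bookkeeping stalls, and one must either find a subtler chain of graded moves or prove the classification of graded projectives by hand for the few algebras concerned, using pure infiniteness together with the concrete structure of $T_E$. Everything else---the $34$ module computations, the verification that out-splitting lifts to a (unital, hence pointed) graded isomorphism, and the transcription to the $C^*$-setting---is bookkeeping.
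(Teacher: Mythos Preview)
Your proposal is correct and matches the paper's approach: the proof of (2)$\Rightarrow$(1) there reads ``This follows from analysis of the cases (1)--(24) above,'' and that analysis does precisely what you outline---sort the graphs within each $K_0^{\gr}$-class by whether a pointed isomorphism exists, and in the affirmative cases exhibit a graded isomorphism via out-splittings, $I^+$ in-splittings, or your ``fallback'' (which is the paper's Proposition~\ref{lateaddition}, combined with talented-monoid identities as in the $E^1_5\cong_{\gr}E^1_{16}$ argument). The hard pair you anticipated is $E^1_8,\,E^1_{12}$, resolved in the paper not by the fallback but by an explicit chain of out-splits and $I^+$ moves found by Eilers via computer search.
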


The paper is organised as follows: In Section~\ref{preliminsec} we collect the concepts from the graded ring theory, graded $K$-theory and symbolic dynamics that we need in the paper. In Section~\ref{shiftequivalent}  we study all 34 graphs above and prove Theorems~\ref{alergy1} and \ref{alergy2}. 

\medskip 

{\bf Acknowledgements.} The authors acknowledge Australian Research Council Discovery Project Grant DP230103184. We would like to thank David Pask and Efren Ruiz for bringing to our attention some of the references related to this investigation.  We are grateful to S\o ren Eilers who provided us with the path of moves from $E^1_8$ to $E^1_{12}$, using a computer search, which allowed us to complete Theorem~\ref{alergy2}.

\section{Preliminary: Algebra, K-theory and dynamics}\label{preliminsec}

\subsection{Graded algebras}\label{algpre}
Throughout the paper rings will have identities and modules are unitary in the sense that the identity of the ring acts as the identity operator on the module. We refer the reader to \cite{H} for the theory of graded rings and the standard terminologies. We provide some definitions for the sake of notation.

Let $\Gamma$ be an abelian group.
A ring $R$ is \emph{$\Gamma$-graded} if $R$ decomposes as a direct sum $\bigoplus_{\gamma\in \Gamma} R_\gamma$, where each $R_\gamma$ is an additive subgroup of $R$ and $R_\gamma R_\eta \subseteq R_{\gamma+\eta}$, for all $\gamma,\eta\in \Gamma$. 

Let $R$ be a $\Gamma$-graded ring.
A right $R$-module $M$ is \emph{$\Gamma$-graded} if $M$ decomposes as a direct sum $\bigoplus_{\gamma \in \Gamma} M_\gamma$, where each $M_\gamma$ is an additive subgroup of $M$ and $M_\gamma R_\eta \subseteq M_{\gamma+\eta}$, for all $\gamma, \eta\in \Gamma$.
A graded left $R$-module is defined analogously. 
If $R$ and $S$ are $\Gamma$-graded rings, then an $R$-$S$ bimodule $M$ is $\Gamma$-graded if $M = \bigoplus_{\gamma\in \Gamma} M_\gamma$ is both a graded left $R$-module and a graded right $S$-module.
In particular, $R_\gamma M_\eta S_\zeta \subseteq M_{\gamma+\eta+\zeta}$, for all $\gamma,\eta,\zeta\in \Gamma$.
If $M$ is a $\Gamma$-graded $R$-module and $\gamma\in \Gamma$, then the $\gamma$-shifted graded right $R$-module is 
\[M(\gamma) := \bigoplus_{\eta\in \Gamma} M(\gamma)_\eta, \text{ where } M(\gamma)_\eta = M_{\gamma+\eta}, \text{ for all }\eta\in \Gamma.\]

For a $\Gamma$-graded ring $R$, there is a graded isomorphism of rings given by the regular representation $\eta : R \to \End_R(R)$ 
which sends $r\in R$ to $\eta_r$ where $\eta_r(x) = rx$, for all $x\in R$, see e.g. \cite[proof of Theorem 2.3.7]{H}.

We write $\Modd R$ for the category of unitary right $R$-modules and module homomorphisms. If $R$ is $\Gamma$-graded, we denote by $\Gr R$ the category of unitary graded right $R$-modules with morphisms preserving the grading. For $\Gamma$-graded rings $R$ and $S$, a functor $\mathcal F : \Gr R \to \Gr S$ is called \emph{graded} if $\mathcal F(M(\gamma))=\mathcal F(M)(\gamma)$ for all graded $R$-modules $M$ and all 
$\gamma \in \Gamma$.  For $\gamma\in\Gamma$, the \emph{shift functor}
\begin{equation}\label{shiftshift}
\mathcal{T}_{\gamma}: \Gr R\longrightarrow \Gr R,\quad M\mapsto M(\gamma)
\end{equation}
is an isomorphism with the property that $\mathcal{T}_{\gamma}\mathcal{T}_{\eta}=\mathcal{T}_{\gamma+\eta}$, for $\gamma,\eta\in\Gamma$.

Let $\Gamma$ be an abelian group, and let $R$ and $S$ be $\Gamma$-graded rings.
A $\Gamma$-graded $R$-$S$ bimodule $M$ determines a graded functor \[\mathcal F_M := -\otimes_R M : \mbox{Gr-}R \rightarrow \mbox{Gr-}S\] between categories of graded modules. We start with the following observation which is a generalisation of~\cite[Proposition~3.1]{adamefrenkevin}.

For an $n$-tuple $(\alpha_1,\dots,\alpha_n)$, where $\alpha_i\in \Gamma$, there is a natural grading on the matrix ring $\M_n(R)$, denoted by 
$\M_n(R)(\alpha_1,\dots,\alpha_n)$  (see~\cite[\S1.3]{H}), such that we have a $\Gamma$-graded ring isomorphism 
\[\End_R\big(R(-\alpha_1)\oplus R(-\alpha_2) \oplus \dots \oplus R(-\alpha_n)\big) \cong_{\gr} \M_n(R)(\alpha_1,\dots,\alpha_n).\]

\begin{prop}\label{prop:ring-lift}
Let $M$ be a graded $R$-$S$ bimodule such that $M\cong_{\gr} S(-\alpha_1) \oplus S(-\alpha_2)\oplus \cdots \oplus S(-\alpha_n)$, $\alpha_i \in \Gamma$,   as graded right $S$-modules.
Then, there exists a graded unital ring homomorphism  \[\xi : R \rightarrow \M_n(S)(\alpha_1, \alpha_2, \cdots, \alpha_n). \] 
Moreover, if $\mathcal F_M$ is a graded equivalence, then $\xi$  is a graded isomorphism of rings.
\end{prop}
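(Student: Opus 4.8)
The plan is to realise $\xi$ as the left regular representation of $R$ on $M$, followed by the standard identification of the graded endomorphism ring of a finite direct sum of shifted copies of $S$ with a graded matrix ring. Since $M\cong_{\gr}S(-\alpha_1)\oplus\cdots\oplus S(-\alpha_n)$ is a finitely generated graded right $S$-module, its graded endomorphism ring coincides with the ordinary endomorphism ring $\End_S(M)$, whose degree-$\gamma$ part consists of the right $S$-linear maps $f$ with $f(M_\eta)\subseteq M_{\gamma+\eta}$ for all $\eta\in\Gamma$; the chosen isomorphism together with the graded matrix-ring isomorphism displayed just before the statement then produces a graded ring isomorphism
\[
\theta\colon \End_S(M)\ \xrightarrow{\ \cong_{\gr}\ }\ \End_S\!\big(S(-\alpha_1)\oplus\cdots\oplus S(-\alpha_n)\big)\ \xrightarrow{\ \cong_{\gr}\ }\ \M_n(S)(\alpha_1,\ldots,\alpha_n).
\]
On the other hand, because $M$ is a graded $R$-$S$ bimodule, left multiplication defines a map $\lambda\colon R\to\End_S(M)$, $\lambda_r(m)=rm$, which is a unital ring homomorphism by associativity of the bimodule action, and which is graded since $r\in R_\gamma$ forces $\lambda_r(M_\eta)=rM_\eta\subseteq R_\gamma M_\eta\subseteq M_{\gamma+\eta}$, i.e.\ $\lambda_r\in\End_S(M)_\gamma$. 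I would then take $\xi:=\theta\circ\lambda$.

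For the final assertion, suppose $\mathcal F_M=-\otimes_R M$ is a graded equivalence $\Gr R\to\Gr S$. As $\theta$ is already an isomorphism, it suffices to show that $\lambda$ is one. An equivalence is fully faithful, so for each $\gamma\in\Gamma$ it induces a bijection $\Hom_{\Gr R}(R,R(\gamma))\xrightarrow{\ \cong\ }\Hom_{\Gr S}(\mathcal F_M(R),\mathcal F_M(R(\gamma)))=\Hom_{\Gr S}(M,M(\gamma))$, using the canonical $\mathcal F_M(R)=R\otimes_R M\cong_{\gr}M$ and the fact that $\mathcal F_M$ is a graded functor, so $\mathcal F_M(R(\gamma))=\mathcal F_M(R)(\gamma)$. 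Since $\mathcal F_M$ respects composition and shifts, and since $R$ and $M$ are finitely generated, these bijections (taken over all $\gamma$) assemble into a graded ring isomorphism $\End_R(R)\xrightarrow{\ \cong_{\gr}\ }\End_S(M)$. Precomposing with the regular representation $\eta\colon R\xrightarrow{\ \cong_{\gr}\ }\End_R(R)$ recalled above gives a graded ring isomorphism $R\to\End_S(M)$; tracing $r\in R$ through it, namely $r\mapsto\eta_r\mapsto\eta_r\otimes\id_M$, which under the canonical $R\otimes_R M\cong_{\gr}M$ is the map $m\mapsto rm$, identifies this isomorphism with $\lambda$. Hence $\lambda$, and therefore $\xi=\theta\circ\lambda$, is a graded isomorphism of rings.

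I expect the only delicate step to be the last compatibility check: that the abstract isomorphism $R\cong\End_S(M)$ extracted from the full faithfulness of $\mathcal F_M$ is literally the left-multiplication map $\lambda$. Everything else --- that $\lambda$ is a well-defined graded unital ring homomorphism, and that $\theta$ is a graded isomorphism --- is routine from the cited properties of graded matrix rings and of the regular representation. It is precisely this compatibility that glues the two halves of the proposition together, so I would make sure to pin it down via the element-tracing above rather than leaving it implicit.
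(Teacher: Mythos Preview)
Your proof is correct and follows essentially the same route as the paper: both factor $\xi$ as the left regular representation $R\to\End_S(M)$ composed with the standard graded isomorphism $\End_S(M)\cong\M_n(S)(\alpha_1,\ldots,\alpha_n)$, and both deduce the isomorphism statement from the full faithfulness of $\mathcal F_M$. The paper writes the first map as the composite $R\cong\End_R(R)\xrightarrow{\mathcal F_M}\End_S(R\otimes_R M)\cong\End_S(M)$ and leaves the identification with left multiplication implicit, whereas you make that compatibility check explicit; this is a welcome clarification, not a different argument.
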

\begin{proof}

We obtain a graded homomorphism of rings $\xi : R \to \M_n(S)(\alpha_1, \alpha_2, \cdots, \alpha_n)$ from the composition
\begin{multline}
\xi : R  \overset{\cong}{\longrightarrow}  \End_R(R) \overset{\mathcal F_M}{\longrightarrow} \End_S(R\otimes_R M) \overset{\cong}{\longrightarrow} \End_S(M) \\ 
\overset{\cong}{\longrightarrow} \End_S(S(-\alpha_1) \oplus S(-\alpha_2)\oplus \cdots \oplus S(-\alpha_n)) 
\overset{\cong}{\longrightarrow}  \M_n(S)(\alpha_1, \alpha_2, \cdots, \alpha_n). 
\end{multline}

Clearly, if $\mathcal F_M$ is an equivalence, then all the maps in the above sequence are graded isomorphism and thus $\xi$ is the graded isomorphism. 
\end{proof}

For a $\Gamma$-graded ring $R$, recall that $K_0^{\gr}(R)$ is the \emph{graded Grothendieck group}, which is the group completion of the  monoid consisting of the graded isomorphisms classes of finitely generated $\Gamma$-graded  projective $R$-modules equipped with the direct sum operation \cite[Section 3.1.2]{H}.
This group is a $\Zz[\Gamma]$-module with a structure induced from the $\Gamma$-module structure on the monoid. More specifically, if $\gamma \in \Gamma$, then for $[P]\in K_0^{\gr}(R)$ we have that $\gamma \cdot [P] := [P(\gamma)]$ defines the action of $\Gamma$.
The class of $R$ itself defines an order unit in $K_0^{\gr}(R)$. If $S$ is also a $\Gamma$-graded ring, we say that a homomorphism $K_0^{\gr}(R) \to K_0^{\gr}(S)$ is \emph{pointed} if it takes $[R]$ to $[S]$. The commutative monoid consisting of all isomorphism classes $[P]$, denoted by $K_0^{\gr}(R)^+$, is called the 
\emph{positive cone} of $K_0^{\gr}(R)$. 
Of particular interest to us is the case when $\Gamma = \Zz$ in which case $K_0^{\gr}(R)$ is a $\Zz[x,x^{-1}]$-module. Note that any unital graded homomorphism $\theta : R\to S$ of $\Zz$-graded rings $R$ and $S$, induces a pointed order-preserving $\mathbb Z[x,x^{-1}]$-module homomorphism $K_0^{\gr}(R)\to K_0^{\gr}(S)$, $[P]\mapsto [P\otimes_R S]$. 

Setting the grade group $\Gamma$ to be trivial, we obtain the Grothendieck group $K_0(R)$. For certain class of rings, such as right regular Noetherian $\mathbb Z$-graded rings or in our case Leavitt path algebras, we have the following relations between $K_0^{\gr}$ and $K_0$~\cite[\S6]{H}: 
\[ 
\begin{split}
\xymatrix{
K^{\gr}_0(R) \ar[rr]^{[P]\mapsto [P(1)]-[P]} && K^{\gr}_0(R) \ar[r]^{U} &  K_0(R) \ar[r] & 0.
}
\end{split}
\]

\subsection{Leavitt path algebras}\label{hfjduier87}

We refer the reader to \cite{lpabook} for the definition of Leavitt path algebras and the standard terminologies. We provide some definitions for the sake of notation.

Let $E = (E^0,E^1, r, s)$ be a row-finite directed graph and $\K$ a field. Throughout the paper, we assume $E$ is a finite graph, i.e., has finite number of vertices and edges. The \emph{Leavitt path algebra} $L_{\K}(E)$ is the universal $\K$-algebra generated by formal elements $\{ v \ | \ v \in E^0 \}$ and $\{ e,e^* \ | \ e\in E^1  \}$ subject to the relations
\begin{enumerate}
    \item $v^2 = v$, and $vw = 0$ if $v\neq w$ are in $E^0$
    \item $s(e) e = e r(e) = e, \ r(e)e^* = e^*s(e) = e^*$, for all $e\in E^0$
    \item $e^*e = r(e)$, for $e\in E^1$
    \item $v= \sum_{s(e) = v}ee^*$, for $v$ which is not a sink. 
\end{enumerate}

The main invariant used in this paper is the \emph{graded Grothendieck group} $K^{\gr}_0(L_\K(E))$, for the graph $E$ and its positive cone $K_0^{\gr}(L_\K(E))^+$. 
Using the deep work of George Bergman on realisation of conical monoids~\cite{lpabook}, the monoid $K_0^{\gr}(L_\K(E))^+$ can be described directly from the graph $E$. Namely, there is a $\mathbb Z$-monoid isomorphism 
 $K_0^{\gr}(L_\K(E))^+\cong T_E$, where $T_E$ is the so-called \emph{talented monoid} of the graph $E$:
 \begin{equation*}
T_E= \Big \langle \, v(i), v \in E^0, i \in \mathbb Z  \, \,  \Big \vert \, \, v(i)= \sum_{e\in s^{-1}(v)} r(e)(i+1) \ \Big \rangle. 
\end{equation*}
Here  the relations in the monoid are defined for those vertices that emit edges.  The action of $\mathbb Z$ on $T_E$ is defined by ${}^n v(i)=v(i+n)$, where $n\in \mathbb Z$.  Under this isomorphism, the class $[L_\K(E)]$ is sent to $1_E:= \sum_{v\in E^0} v$. 

Throughout the paper, we occasionally use the notation $E$ to represent a graph and its adjacency matrix $A_E$ interchangeably.

\subsection{Symbolic dynamics} \label{sec:symbolic-dynamics}
We refer the reader to \cite{Lind-Marcus2021} for an introduction to symbolic dynamics.

Two square nonnegative integer matrices $A$ and $B$ are called {\it elementary shift equivalent},  denoted  $A\sim_{\ES} B$, if there are nonnegative integer matrices $R$ and $S$ such that 
$$A=RS\,\, \, \text{and } \, \, B=SR.\\$$
Note that $R$ and $S$ need not be square matrices.  

The equivalence relation $\sim_{\Ss}$  generated by elementary shift equivalence on the set of  nonnegative integer square matrices (of any finite size) is called {\it strong shift equivalence}. That is,  $A\sim_{\Ss} B$ in case there is a sequence of square matrices $A_i$ and a sequence of elementary shift equivalence $(R_i,S_i)$, $1\leq i \leq k$, such that $$A=A_0\sim_{\ES} A_1 \sim_{\ES} A_2 \sim_{\ES}  \cdots  \sim_{\ES} A_k = B.$$
Here the intermediate matrices are related by $A_{i-1}=R_iS_i$ and $A_{i}=S_iR_i$.

Next we introduce the notion of shift equivalences.  The nonnegative  integer square matrices $A$ and $B$ are called {\it shift equivalent}, denoted $A\sim_S B$,  if there exist $k\ge 1$ and nonnegative integer matrices $R$ and $S$ such that 
\begin{gather}
A^k= RS, \ \ \, B^k=SR, \label{sdjhsjh}\\ 
AR=RB, \ \ \, SA=BS. \notag
\end{gather}

If $A$ and $B$ are strong shift equivalent via a sequence of elementary shift equivalence $(R_i,S_i)$, $1\leq i \leq k$, then $R=R_1\dots R_k$ and $S=S_k \dots S_1$ give (\ref{sdjhsjh}), i.e., $A$ and $B$ are shift equivalent~\cite[Proposition 7.3.2]{Lind-Marcus2021}.

Next we recall the definition of Krieger's dimension triple, which is a complete invariant for shift equivalence.  Let $A$ be an $r\times r$ matrix over $\mathbb N$.
The \emph{dimension group} of $A$ is the inductive limit of the stationary inductive system given by $A$ as a linear map on $\Nz^{r}$ (acting from right), i.e., 
\begin{equation}\label{thuluncht}
\mathbb Z^{r} \stackrel{A}{\longrightarrow} \mathbb Z^{r} \stackrel{A}{\longrightarrow}  \mathbb Z^{r} \stackrel{A}{\longrightarrow} \cdots.
\end{equation}

As a group, this is isomorphic to 
\begin{equation}\label{dimgroup1}
    G_A := (\Nz^{r}\times \mathbb N)/ \sim_{A}
\end{equation}
 where $(v,k) \sim_{A} (w,l)$  if $v A^{m-k}  = w A^{m-l} $, for some $m\in \Nz$. Let $[v,k]$ denote the equivalence class of $(v,k)$. Clearly $[ v A^n ,n+k]=[v,k]$, for any $n\in \mathbb N$. Then, it is not difficult to show that the direct limit of the system \eqref{thuluncht} is the abelian group consists of equivalence classes $[v,k]$, $v\in \mathbb Z^{r}$, $k \in \mathbb N$, with addition defined by
\[[v,k]+[w,k']=[v A^{k'}+w A^k,k+k'].\]
  
The \emph{positive cone} of $G_A$ is the commutative monoid \[G_A^+ := \big \{ [v,k] \in G_A \mid  v\in \mathbb N^{r}, k \in \mathbb N \big \}.\] 

The multiplication by $A$ defines an automorphism $\theta_A$ on $G_A$ that preserves the positive cone. This automorphism induces a $\mathbb Z[x,x^{-1}]$-module structure on $G_A$. 
The class of the unit $u_A := [(1,\ldots,1), 0]$ is an order unit in $G_A$. The triple $(G_A, G_A^+, \theta_A)$ is called \emph{Krieger's dimension triple of $A$}, or \emph{dimension group} of $A$ for short. 

The dimension group is a complete invariant:  two matrices $A$ and $B$ are shift equivalent if and only if there is a group isomorphism between the  dimension groups $G_A$ and $G_B$ 
which preserves the positive cones and the $\mathbb Z[x,x^{-1}]$-module structures on the groups  (\cite[Theorem 7.5.8]{Lind-Marcus2021}).

A shift equivalence between the matrices $A$ and $B$ implemented via $(R,S)$ (\ref{sdjhsjh}) determines the order-preserving $\mathbb Z[x,x^{-1}]$-module isomorphism 
\begin{align}\label{thuemas}
\phi_R:  G_A&\longrightarrow G_B\\
   [v, k] &\longmapsto [v R , k]. \notag
\end{align}

Finally, we relate the Leavitt path algebra of a graph via the graded Grothendieck group with the dimension group of its adjacency matrix.  For finite graphs with no sinks, there is an order-preserving group isomorphism
\begin{align}
\beta_E: K_0^{\gr} (L_\K(E)) &\longrightarrow G_{A_E}\label{ncjgloria}\\
 [L_\K(E)]&\longmapsto [(1,\dots,1),0] \notag
 \end{align}
 which preserves the $\mathbb Z[x,x^{-1}]$-module structures on the groups. Combining with (\ref{thuemas}), we then have the commutative diagram (see~\cite[\S2]{adamefrenkevin} and \cite{HazaratDyn2013}):
\begin{equation}\label{eq-kthy-shift}
\begin{tikzcd}
    K_0^{\gr}(L_\K(E)) \arrow[r, "\alpha"] \arrow[d, "\beta_E"]  & K_0^{\gr}(L_\K(F)) \arrow[d,"\beta_F"] \\
    G_{A_E} \arrow[r,"R"]   & G_{A_F} 
\end{tikzcd}
\end{equation}

It is easy to see that if an $r\times r$ integral matrix $A$ is invertible in $\GL_r(\mathbb Q)$, i.e., $\det(A) \not = 0$, then there is group monomorphism
\begin{align}\label{linkkgroup}
G_A=(\mathbb Z^r\times \mathbb N)/ \sim_{A} &\longrightarrow  \mathbb Z \left[ \frac{1}{\det(A)}\right] ^r\\
[v,k]&\longmapsto vA^{-k}. \notag
\end{align}

Combining~(\ref{linkkgroup}) with the reinterpretation of the dimension group of a matrix as the graded Grothendieck group  of the associated Leavitt path algebra (\ref{ncjgloria}), we have the following lemma which will be used throughout the paper.

\begin{lem} \label{DL} Let $E$ be a finite graph with no sinks and $A_E$ its associated adjacency matrix. If $\det(A_E) = \pm 1$, then as a group 
\[K_0^{\gr}(L_\K(E))\cong   \mathbb Z ^r.\]
\end{lem}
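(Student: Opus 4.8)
The plan is to combine the two explicit descriptions of the dimension group that the excerpt has already set up: the reinterpretation $\beta_E\colon K_0^{\gr}(L_\K(E))\xrightarrow{\ \cong\ } G_{A_E}$ from \eqref{ncjgloria}, which is an isomorphism of abelian groups, and the monomorphism \eqref{linkkgroup}, $G_{A}\hookrightarrow \mathbb Z[1/\det(A)]^r$, $[v,k]\mapsto vA^{-k}$, which is valid whenever $\det(A)\neq 0$. So it suffices to show that when $\det(A_E)=\pm1$ the map in \eqref{linkkgroup} is onto $\mathbb Z^r$; since $\mathbb Z[1/\det(A_E)]=\mathbb Z[1/(\pm1)]=\mathbb Z$ in this case, the target is exactly $\mathbb Z^r$, and an injective surjection is an isomorphism.

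First I would observe that when $\det(A_E)=\pm1$ the matrix $A_E$ lies in $\GL_r(\mathbb Z)$, so $A_E^{-1}$ again has integer entries and $A_E^{-k}\in M_r(\mathbb Z)$ for every $k\in\mathbb N$. Hence for any $[v,k]\in G_{A_E}$ with $v\in\mathbb Z^r$ the image $vA_E^{-k}$ already lies in $\mathbb Z^r$, which shows the map of \eqref{linkkgroup} really does take values in $\mathbb Z^r$ (not merely in $\mathbb Z[1/\det(A_E)]^r$, which here is $\mathbb Z^r$ anyway). For surjectivity, given any $w\in\mathbb Z^r$, set $v:=w$ and $k:=0$; then $[v,0]\in G_{A_E}$ maps to $vA_E^0=w$. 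Injectivity is already established in the excerpt (the displayed map \eqref{linkkgroup} is asserted to be a monomorphism whenever $\det(A)\neq0$, a fact one checks directly from the definition of $\sim_A$: if $vA_E^{-k}=wA_E^{-l}$ then multiplying by a large power $A_E^m$ gives $vA_E^{m-k}=wA_E^{m-l}$, i.e.\ $(v,k)\sim_{A_E}(w,l)$). Therefore \eqref{linkkgroup} is a group isomorphism $G_{A_E}\cong\mathbb Z^r$.

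Finally I would assemble the chain: using that $E$ has no sinks (so \eqref{ncjgloria} applies) we get $K_0^{\gr}(L_\K(E))\cong G_{A_E}$ as groups, and composing with the isomorphism $G_{A_E}\cong\mathbb Z^r$ just obtained yields $K_0^{\gr}(L_\K(E))\cong\mathbb Z^r$ as abelian groups, which is the claim. I do not expect any serious obstacle here: the statement is only about the underlying group structure (it deliberately forgets the order unit, the positive cone, and the $\mathbb Z[x,x^{-1}]$-action), and every ingredient—the Bergman/talented-monoid identification behind \eqref{ncjgloria}, and the explicit formula \eqref{linkkgroup}—has already been recorded above. The only point requiring a line of care is noting that $\det(A_E)=\pm1$ forces $A_E^{-1}$ to be integral so that the direct limit \eqref{thuluncht} stabilises to $\mathbb Z^r$; everything else is bookkeeping.
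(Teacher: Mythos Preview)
Your proposal is correct and follows precisely the approach the paper takes: it invokes the isomorphism \eqref{ncjgloria} and then argues that the monomorphism \eqref{linkkgroup} is also surjective when $\det(A_E)=\pm1$. The paper's own proof is a single sentence to this effect, so your write-up simply spells out the details the paper leaves implicit.
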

\begin{proof}
One can easily show that when $\det(A_E) = \pm 1$, then the monomorphism~(\ref{linkkgroup}) is surjective as well.
\end{proof}

For other cases, even when $\det(A_E) \not=0$, it is much harder to give a good presentation of the dimension group, as we shall even see  among these small graphs. 
When the determinant of an adjacency matrix is not $\pm 1$, we shall use an alternative description of the dimension group to aid in its computation. This description is taken from~\cite[Chapter~7]{Lind-Marcus2021}. 

Let $A$ be an $r \times r$ integral matrix. Consider the abelian group 
\begin{equation}\label{hgdgdgd}
 \Delta_A=\Big \{ v \in \bigcap_{k=1}^\infty \mathbb{Q}^rA^k \, | \, vA^k \in \mathbb{Z}^r \text{, for some } k \geq 0\Big\},
 \end{equation}
with positive cone 
\begin{equation}\label{posiposi8}
\Delta_A^+ := \big \{ v\in \Delta_A \mid v A^l\in \mathbb N ^{r} \text{, for some } l\in \mathbb N \big \}.
\end{equation}
The multiplication $v\mapsto vA$ defines an automorphism $\delta_A$ on $\Delta_A$ that preserves the positive cone.  The automorphism $\delta_A$ naturally makes $\Delta_A$ into a $\mathbb Z [x,x^{-1}]$-module.  

 \begin{cor}\label{alternativeformdel} 
 Let $A$ be an $r \times r$ invertible (over $\mathbb{Q}$) integral matrix. Then \[\Delta_A= \bigcup_{k=0}^\infty \mathbb{Z}^r A^{-k}.\]
    \end{cor}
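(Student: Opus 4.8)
The plan is to prove the two inclusions $\Delta_A \subseteq \bigcup_{k\geq 0}\mathbb{Z}^r A^{-k}$ and $\bigcup_{k\geq 0}\mathbb{Z}^r A^{-k} \subseteq \Delta_A$ separately, using that $A$ is invertible over $\mathbb{Q}$, so $A^{-1}$ exists as a rational matrix and $\mathbb{Q}^r A^k = \mathbb{Q}^r$ for every $k \in \mathbb{Z}$.

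\begin{proof}
Since $A$ is invertible over $\mathbb{Q}$, we have $\mathbb{Q}^r A^k = \mathbb{Q}^r$ for all $k\ge 0$, hence $\bigcap_{k=1}^\infty \mathbb{Q}^r A^k = \mathbb{Q}^r$. Therefore
\[
\Delta_A = \big\{ v\in \mathbb{Q}^r \mid vA^k \in \mathbb{Z}^r \text{ for some } k\ge 0 \big\}.
\]
If $v\in \mathbb{Z}^r A^{-k}$ for some $k\ge 0$, write $v = wA^{-k}$ with $w\in \mathbb{Z}^r$; then $v\in \mathbb{Q}^r$ and $vA^k = w\in \mathbb{Z}^r$, so $v\in \Delta_A$. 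This gives $\bigcup_{k=0}^\infty \mathbb{Z}^r A^{-k}\subseteq \Delta_A$. Conversely, if $v\in \Delta_A$, pick $k\ge 0$ with $w:=vA^k \in \mathbb{Z}^r$; then $v = wA^{-k}\in \mathbb{Z}^r A^{-k}$, so $\Delta_A \subseteq \bigcup_{k=0}^\infty \mathbb{Z}^r A^{-k}$. The two inclusions give the claimed equality.
\end{proof}

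There is essentially no obstacle here: the only point requiring care is the first sentence, namely that invertibility over $\mathbb{Q}$ collapses the intersection $\bigcap_{k\ge 1}\mathbb{Q}^r A^k$ to all of $\mathbb{Q}^r$, after which the membership condition in \eqref{hgdgdgd} reduces exactly to saying $v$ lies in some $\mathbb{Z}^r A^{-k}$. One should also note in passing that the union $\bigcup_{k=0}^\infty \mathbb{Z}^r A^{-k}$ is genuinely increasing, since $\mathbb{Z}^r \subseteq \mathbb{Z}^r A^{-1}$ follows from $\mathbb{Z}^r A \subseteq \mathbb{Z}^r$ (the entries of $A$ are integers), so that the description is a well-behaved directed union of lattices in $\mathbb{Q}^r$.
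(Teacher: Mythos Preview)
Your proof is correct and follows essentially the same approach as the paper: first use invertibility of $A$ over $\mathbb{Q}$ to collapse $\bigcap_{k\ge 1}\mathbb{Q}^r A^k$ to $\mathbb{Q}^r$, reducing $\Delta_A$ to $\{v\in\mathbb{Q}^r \mid vA^k\in\mathbb{Z}^r \text{ for some } k\ge 0\}$, after which the equality with $\bigcup_{k\ge 0}\mathbb{Z}^r A^{-k}$ is immediate. The paper states this more tersely, but your added detail (the two explicit inclusions and the remark that the union is increasing) is correct and helpful.
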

    \begin{proof}
    First note that if the matrix $A$  is invertible over $\mathbb{Q}$, then $\Delta_A$ takes the simpler form:
\begin{equation*}
 \Delta_A=\Big \{ v \in \mathbb{Q}^r \, | \, vA^k \in \mathbb{Z}^r \text{, for some } k \geq 0\Big\}.
\end{equation*} 
The statement of the Corollary is now immediate. 
    \end{proof}

The two pictures of the dimension data (\ref{dimgroup1}) and (\ref{hgdgdgd}) are equivalent. Indeed, a concrete order-preserving $\Zz[x,x^{-1}]$-module isomorphism is given by 
\begin{align*}
\psi_A\colon \Delta_A &\longrightarrow G_A\\
v &\longmapsto [v A^l, l], 
\end{align*}
where $l\in \mathbb N$ is such that $vA^l\in \mathbb Z ^{r}$.

The following proposition can be considered as a generalisation of \cite[Theorem~3.15]{AP}. It will be used in Section~\ref{shiftequivalent} to show that some of the Leavitt path algebras of Table~\ref{tablegray} are graded isomorphic.

\begin{prop}\label{lateaddition}
Let $E$ and $F$ be finite graphs with no sinks which are strongly shift equivalent by the sequence of elementary shift equivalences $(R_i,S_i)$, $1\leq i \leq l$. If the isomorphism 
\[\phi_{R_1R_2\dots R_l}: K_0^{\gr}(L_\K(E)) \rightarrow K_0^{\gr}(L_\K(F)),\] induced by $R_1R_2\dots R_l$ sends $[L_\K(E)]$ to $\sum_{i=1}^{k}[L_\K(F)(n_i)]$, for some $k\in \mathbb N$ and $n_i\in \mathbb Z$, then 
\begin{equation}\label{hfuthf22}
L_\K(E) \cong_{\gr} \M_n(L_\K(F))(n_1, n_2, \cdots, n_k).
\end{equation}
\end{prop}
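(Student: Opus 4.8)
The plan is to feed the hypothesis into Proposition~\ref{prop:ring-lift}, which is precisely built for this purpose. First I would record what strong shift equivalence buys us at the level of module categories: by Williams' theorem together with the discussion in \S\ref{sec:symbolic-dynamics}, a sequence of elementary shift equivalences $(R_i,S_i)$ between $E$ and $F$ corresponds to a chain of in-splits/out-splits (or, more directly, to the fact established in the literature on Leavitt path algebras that $A_E \sim_{\Ss} A_F$ implies $L_\K(E)$ and $L_\K(F)$ are graded Morita equivalent via a graded equivalence $\mathcal{F}\colon \Gr L_\K(E) \to \Gr L_\K(F)$, compatible on $K_0^{\gr}$ with the isomorphism $\phi_{R_1\cdots R_l}$ through the diagram~\eqref{eq-kthy-shift} and $\beta_E,\beta_F$). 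Let me write $A = L_\K(E)$ and $B = L_\K(F)$. A graded Morita equivalence between $A$ and $B$ is, up to the standard Morita theory in the graded setting, implemented by a graded $A$-$B$ bimodule $M$ with $\mathcal{F} = -\otimes_A M$, and $M$ is a graded progenerator as a right $B$-module.

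The key step is to identify $M$ as a right $B$-module. Since $\mathcal{F}$ is a graded equivalence, $\mathcal{F}(A) = M$ is a finitely generated graded projective right $B$-module, so $[M] = [A\otimes_A M] \in K_0^{\gr}(B)^+$ is the image of $[A] = [L_\K(E)]$ under the $K_0^{\gr}$-map induced by $\mathcal{F}$. By hypothesis this image is $\sum_{i=1}^{k}[L_\K(F)(n_i)] = \sum_{i=1}^{k}[B(n_i)] = \big[\bigoplus_{i=1}^k B(n_i)\big]$. For Leavitt path algebras of the graphs in question the monoid $K_0^{\gr}(B)^+ \cong T_F$ is a refinement/cancellative monoid on which the class of a finitely generated graded projective module determines it up to graded isomorphism (this is the content of the realisation results via Bergman's theorem quoted in \S\ref{hfjduier87}, as used e.g. in \cite[Theorem~3.15]{AP}), so we conclude
\[
M \cong_{\gr} \bigoplus_{i=1}^{k} B(n_i) = L_\K(F)(n_1)\oplus L_\K(F)(n_2)\oplus\cdots\oplus L_\K(F)(n_k)
\]
as graded right $B$-modules. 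Relabelling $\alpha_i := -n_i$ (or $n_i$ depending on the sign convention in $S(-\alpha)$), we are exactly in the situation of Proposition~\ref{prop:ring-lift} with $S = B$ and $n = k$: there is a graded unital ring homomorphism $\xi\colon A \to \M_k(B)(n_1,\dots,n_k)$, and since $\mathcal{F}_M = \mathcal{F}$ is a graded equivalence, $\xi$ is a graded ring isomorphism. This gives $L_\K(E) \cong_{\gr} \M_k(L_\K(F))(n_1,n_2,\cdots,n_k)$, which is the claimed~\eqref{hfuthf22} (with $n = k$; I would flag that the $n$ in the statement should be $k$, or silently set $n := k$).

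The main obstacle is the middle step: passing from "the classes in $K_0^{\gr}$ agree" to "the bimodule $M$ is genuinely graded-isomorphic to $\bigoplus B(n_i)$", and before that, producing the bimodule $M$ implementing the equivalence at all with the $K_0^{\gr}$-map being exactly $\phi_{R_1\cdots R_l}$. The first half of this requires that finitely generated graded projective modules over $L_\K(F)$ are classified by their class in $K_0^{\gr}(L_\K(F))^+ = T_F$ — i.e. that $T_F$ has the relevant refinement property and that the Bergman-type realisation gives a full classification, not merely a surjection onto the monoid; this is standard for Leavitt path algebras but must be cited carefully (it is essentially how \cite[Theorem~3.15]{AP} proceeds, and the present proposition is billed as a generalisation of that result). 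The second half — compatibility of the Morita-implementing functor with the prescribed $\phi_{R_1\cdots R_l}$ via~\eqref{eq-kthy-shift} — is the genuinely technical point: one builds $\mathcal{F}$ as the composite of the elementary equivalences $\mathcal{F}_i$ coming from each single in-split/out-split move, checks each $\mathcal{F}_i$ induces $\phi_{R_i}$ on $K_0^{\gr}$ (this is the known behaviour of graph moves on talented monoids), and composes. Everything else — the graded Morita theory, Proposition~\ref{prop:ring-lift}, and the bookkeeping of shifts — is routine.
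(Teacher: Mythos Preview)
Your proposal is correct and follows essentially the same route as the paper: produce a graded Morita equivalence $\mathcal{F}$ with $K_0^{\gr}(\mathcal{F})=\phi_{R_1\cdots R_l}$, set $M=\mathcal{F}(L_\K(E))$, deduce $M\cong\bigoplus_i L_\K(F)(n_i)$ from the equality of classes, and apply Proposition~\ref{prop:ring-lift}. Your write-up is in fact more careful than the paper's about the two points you flag as obstacles (compatibility of $\mathcal{F}$ with $\phi_{R_1\cdots R_l}$, and passing from equality in $K_0^{\gr}$ to graded isomorphism of modules), both of which the paper leaves implicit; and you are right that the $n$ in the displayed conclusion should read $k$.
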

\begin{proof}
Since $E$ and $F$ are strong shift equivalent via $(R_i,S_i)$, there is a graded Morita equivalence
\[\mathcal F : \mbox{Gr-}R \rightarrow \mbox{Gr-}S,\] 
such that $K_0^{\gr}(\mathcal F)=\phi_{R_1R_2\dots R_l}$. 

Therefore in $K_0^{\gr}(L_\K(F))$ we have 
\begin{equation}\label{nopower}
\sum_{i=1}^{k}[L_\K(F)(n_i)]= \phi_{R_1R_2\dots R_l}([L_\K(E)])=K_0^{\gr}(\mathcal F)([L_\K(E)]) .
\end{equation}
Let $\mathcal F(L_\K(E))=M$. Then $M$ has natural structure $L_\K(E)$-$L_\K(F)$-bimodule (see~\cite[Theorem~2.3.5]{H}). Then from~(\ref{nopower}) it follows that $M\cong L_\K(F)(n_1) \oplus \cdots \oplus L_\K(F)(n_k)$ as graded right $L_\K(F)$-modules. Now Proposition~\ref{prop:ring-lift} gives the graded ring isomorphism (\ref{hfuthf22}). 
\end{proof}

One of the aims of the paper is to show that among small graphs, shift equivalence implies strong shift equivalence. By (\ref{eq-kthy-shift}) if two matrices are shift equivalent, then the graded Grothendieck groups of the associated Leavitt path algebras are $\mathbb Z[x,x^{-1}]$-module isomorphic. Then (\ref{lol153332}) implies that their $K_0$'s are isomorphic. Further the following theorem from~\cite[Proposition 7.3.7]{Lind-Marcus2021} allows us to further seperate the ``small'' graphs which are supposed to be shift equivalent into separate groups. 

\begin{prop}[\cite{Lind-Marcus2021}] \label{marcusbks}Let $A$ and $B$ be integral matrices which are  shift equivalent. Then $A$ and $B$ have the same set of nonzero eigenvalues.
\end{prop}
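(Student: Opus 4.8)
The plan is to reduce to showing that shift equivalent matrices have the same nonzero spectrum (counted with appropriate multiplicity, or just as a set, since the statement only claims equality of sets). The starting point is the definition of shift equivalence: there exist a lag $k\geq 1$ and nonnegative integer matrices $R,S$ with $A^k=RS$, $B^k=SR$, $AR=RB$ and $SA=BS$.

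First I would recall the elementary linear-algebra fact that for any two (not necessarily square) matrices $P$ and $Q$ for which both products are defined, $PQ$ and $QP$ have the same nonzero eigenvalues: if $PQx=\lambda x$ with $\lambda\neq 0$, then $Qx\neq 0$ and $QP(Qx)=Q(PQx)=\lambda(Qx)$, so $\lambda$ is an eigenvalue of $QP$; the reverse inclusion is symmetric. Applying this with $P=R$, $Q=S$ gives that $A^k=RS$ and $B^k=SR$ have the same nonzero eigenvalues. Since the nonzero eigenvalues of $A^k$ are exactly the $k$-th powers of the nonzero eigenvalues of $A$ (and likewise for $B$), this shows the multiset of $k$-th powers of nonzero eigenvalues of $A$ equals that of $B$.

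The remaining issue is to pass from "$k$-th powers of nonzero eigenvalues agree" to "nonzero eigenvalues agree", which is where the intertwining relations $AR=RB$ and $SA=BS$ come in. I would argue as follows: if $\lambda$ is a nonzero eigenvalue of $A$ with left eigenvector $v$ (so $vA=\lambda v$), then $vR$ satisfies $(vR)B = v(RB) = v(AR) = (vA)R = \lambda(vR)$; one checks $vR\neq 0$ by noting that $vRS = vA^k = \lambda^k v \neq 0$ (using $\lambda\neq 0$), so $vR$ is a nonzero left eigenvector of $B$ for the same eigenvalue $\lambda$. This shows every nonzero eigenvalue of $A$ is a nonzero eigenvalue of $B$; the symmetric argument with $S$ and the relation $SA=BS$ gives the reverse inclusion. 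Hence $A$ and $B$ have the same set of nonzero eigenvalues.

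I do not expect a serious obstacle here — the proposition is essentially a packaging of the two standard facts above (spectrum of $PQ$ versus $QP$, and the effect of the intertwiners on eigenvectors), and indeed it is quoted from \cite{Lind-Marcus2021}. The one mild subtlety, if one wanted multiplicities rather than just the set, is handling the Jordan-block structure under the intertwining maps; but since the statement only asserts equality of the \emph{set} of nonzero eigenvalues, the eigenvector argument above suffices and I would not push further.
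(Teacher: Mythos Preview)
Your argument is correct. Note that the paper does not supply its own proof of this proposition --- it is simply quoted from \cite[Proposition~7.3.7]{Lind-Marcus2021} --- so there is nothing to compare against beyond the cited source, where the standard argument is essentially the one you give via the intertwiners. One small remark: your second paragraph (the eigenvector transport using $AR=RB$ and $SA=BS$, with nonvanishing guaranteed by $vRS=vA^k=\lambda^k v$) already establishes the full statement on its own, so the first paragraph on $PQ$ versus $QP$ is logically redundant here; it would only be needed if you wanted to match characteristic polynomials or multiplicities rather than just the set of nonzero eigenvalues.
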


The following table groups the graphs with the same (non-graded and graded) $K$-theory  of their Leavitt path algebras and the same Perron-Frobenius eigenvalue of their adjacency matrices. We will show that the adjacency matrices of the graphs appearing in each group are indeed strong shift equivalent and their associated Leavitt path algebras are graded Morita equivalent, thus affirming Graded Morita Classification Conjecture for this class of graphs. 
Within each group  we also show that Graded Classification Conjecture holds. Interesting examples appear here: For instance, as the table shows, the algebras $L_\K(E^6_1)$ and $L_\K(E^7_1)$ have the same graded Grothendieck groups and the same Perron-Frobenius eigenvalues, but their $K_0$ are different. This implies that their graded Grothendieck groups are not \emph{module} isomorphic and hence they are not shift equivalent. 

\begin{longtable}{lllll}
\caption{Small graphs and their $K$-theory data}\label{tablegray}\\

\rowcolor{gray}

Graphs                                & $K_0(L_\K(E))$                                                 & $K^{\gr}_0(L_\K(E))$                          &\hspace{5pt}& Perron-Frobenius eigenvalue \\
$E^1_1$                               & $\{ \bar{0}\}$                                              & $\mathbb{Z}^3$                                                                                                        && $1.46667$                    \\

\rowcolor{LGray}
$E^1_2$                               & $\{ \bar{0}\}$                                              & $\mathbb{Z}^3$                                                                                                         && $1.75448$                    \\
$E^1_3$                               & $\{ \bar{0}\}$                                              & $\mathbb{Z}\left[ \frac{1}{2} \right]  \begin{bmatrix} 1 & 1 & 1 \end{bmatrix} \oplus 0 \times \mathbb{Z}^2$                                                               && $2$                          \\

\rowcolor{LGray}
$E^1_4$                               & $\{ \bar{0}\}$                                              & $\mathbb{Z}^3$                                                                                                         && $1.46557$                    \\
$E^1_5, E^1_{13}, E^1_{14}, E^1_{16}$ & $\{ \bar{0}\}$                                              & $\mathbb{Z}^2$                                                                                                        && $\frac{1}{2}(1 + \sqrt{5})$  \\

\rowcolor{LGray}
$E^1_6, E^1_{7}, E^1_{15}, E^1_{17}$    & $\{ \bar{0}\}$                                              & $\mathbb{Z}\left[ \frac{1}{2} \right] $                                                                              && $2$                          \\
$E^1_8,E^1_{12}$                      & $\{ \bar{0}\}$                                              & $\mathbb{Z}^3$                                                                                                        && $2.24698$                    \\
\rowcolor{LGray}
$E^1_9$                               & $\{ \bar{0}\}$                                              & $\mathbb{Z}^2$                                                                                                         && $\frac{1}{2}(3 + \sqrt{5})$  \\
$E^1_{10}$                            & $\{ \bar{0}\}$                                              & $\left( \mathbb{Z}\left[ \frac{1}{2}\right] \right) ^2$                                                               && $\sqrt{2}$                   \\

\rowcolor{LGray}
$E^1_{11}$                            & $\{ \bar{0}\}$                                              & $\mathbb{Z}^3$                                                                                                        && $1.80194$                    \\
$E^1_{18}$                            & $\{ \bar{0}\}$                                              & $\mathbb{Z}^3$                                                                                                         && $2.32472$                    \\

\rowcolor{LGray}
$E^2_1$                               & $\mathbb{Z}/2\mathbb{Z}$                                    & $\mathbb{Z}^3$                                                                                                         && $2.20557$                    \\
$E^2_2$                               & $\mathbb{Z}/2\mathbb{Z}$                                    & $\mathbb{Z}^3$                                                                                                         && $\frac{1}{2}(1 + \sqrt{5})$  \\

\rowcolor{LGray}
$E^2_3, E^2_4, E^3_4$                 & $\mathbb{Z}/ 2 \mathbb{Z}$                                   & $\mathbb{Z}\left[ \frac{1}{2} \right] \left[  \phantom{ [ } \begin{matrix} \hspace{-4pt}1 & 1 \\ \end{matrix}  \right] \phantom{ ] }  \hspace{-4.5pt} \oplus 0 \times \mathbb{Z}$                                                                && $2$                          \\
$E^2_5,E^3_1$                         & $\mathbb{Z}/2\mathbb{Z}$                                    & $\mathbb{Z}^3$                                                                                                         && $1.83929$                    \\

\rowcolor{LGray}
$E^2_6, E^3_2$                               & $\mathbb{Z}/2\mathbb{Z}$                                    & $\mathbb{Z}^2$                                                                                                       && $1+\sqrt{2}$                   \\
$E^3_3$                               & $\mathbb{Z}/ 2 \mathbb{Z}$                                   & $ \mathbb{Z} \left[ \frac{1}{3}\right] $       && $3$                          \\
\rowcolor{LGray}

\rowcolor{LGray}

$E^4_1$                               & $\mathbb{Z}/3\mathbb{Z}$                                    & $\mathbb{Z}^3$                                                                                                        && $2.1479$                     \\
$E^4_2$                               & $\mathbb{Z}/ 3 \mathbb{Z}$                                   & $\left( \mathbb{Z} \left[ \frac{1}{2}\right] \right)^2$                                                               && $1+\sqrt{3}$                  \\

\rowcolor{LGray}

$E^5_1$                               & $\mathbb{Z}/4 \mathbb{Z}$                                   & $\mathbb{Z}^3$                                                                                                         && $1+\sqrt{2}$                  \\

$E^6_1$                               & $\mathbb{Z}/ 2 \mathbb{Z} \oplus \mathbb{Z} / 2 \mathbb{Z}$ & $\mathbb{Z}\left[ \frac{1}{2} \right] \left[  \phantom{[} \begin{matrix} \hspace{-4pt}1 & 1 & 1 \\ \end{matrix}  \right] \phantom{]}  \hspace{-4.5pt} \oplus 0 \times \mathbb{Z}^2$                                                                 && $2$                          \\

\rowcolor{LGray}

$E^7_1$                               & $ \mathbb{Z}$                                               & $\mathbb{Z}\left[ \frac{1}{2} \right] \left[  \phantom{[} \begin{matrix} \hspace{-4pt}1 & 1 & 1 \\ \end{matrix}  \right] \phantom{]}  \hspace{-4.5pt} \oplus 0 \times \mathbb{Z}^2$                                                     && $2$                          \\

$E^7_2$                               & $ \mathbb{Z}$                                               & $\mathbb{Z}^3$                                                                                                        && $1+\sqrt{2}$                

\end{longtable}

Although for a graph $E$ with $\det(A_E)\not = 0$ it is rather easy to compute its graded Grothendieck group (Lemma~\ref{DL} and Corollary~\ref{alternativeformdel}), it is much harder to compute its positive cone. Even for the graph $E^1_1$, although $\det(A_{E^1_1})=1$ and thus $K_0^{\gr}(L_\K(E^1_1))=\mathbb Z^3$, its positive cone is quite complicated (see~\S\ref{shiftequivalent}). We will use Lemma~\ref{LM} below to compute  the positive cones (i.e., the talented monoid) of some of the small graphs. 

Recall that a {\it nonnegative matrix}  is a matrix where all its entries are nonnegative.  A nonnegative matrix $A$ is {\it irreducible} if for each ordered
pair of indices $i,j$ , there exists some $n> 0$ such that $(A^n)_{ij} > 0$. For the adjacency matrix of a graph, this is the same as the graph being strongly connected.
A strongly connected graph (or its adjacency matrix) has {\it period} $P$, where $P$ is the greatest common divisor of the length of its cycles. A  graph (or its adjacency matrix) is {\it aperiodic} if its period $P$ is $1$. A matrix is {\it primitive} if it is both aperiodic and irreducible. By~\cite[Theorem 4.5.8]{Lind-Marcus2021} a matrix $A$ is primitive if and only if there is a $k>0$ such that all entires of $A^k$ is positive. This fact will be used in the paper. 

Although Lemma~\ref{DL} gives that the graded Grothendieck groups in the case of invertible adjacency matrices with determinant $\pm1$, are just certain copies of $\mathbb Z$, 
the positive cones are much more involved. One can argue that it is not the graded Grothendieck group, buts its talented monoid that captures all the information. We will use the following deep statement in linear algebra to determine the positive cone in the case of primitive matrices.

\begin{lem}[\cite{Lind-Marcus2021}, Lemma 7.3.8] \label{LM} Let $A$ be an $r\times r$ primitive matrix and let $\bf{z}$ be a right Perron-Frobenius eigenvector for $A$. If 
$\mathbf{u}\in \mathbb Q^r$ is a row vector, then ${\mathbf u}A^k$ is eventually positive if and only if $\mathbf{u} \cdot \mathbf{z} >0$
\end{lem}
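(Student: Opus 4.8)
The statement to prove is: for an $r \times r$ primitive matrix $A$ with right Perron–Frobenius eigenvector $\mathbf{z}$, a row vector $\mathbf{u} \in \mathbb{Q}^r$ satisfies $\mathbf{u} A^k > 0$ eventually if and only if $\mathbf{u} \cdot \mathbf{z} > 0$.

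The plan is to use the spectral decomposition of a primitive matrix coming from Perron–Frobenius theory. Let $\lambda > 0$ be the Perron–Frobenius eigenvalue of $A$, let $\mathbf{z}$ be its right eigenvector (a column vector, all entries positive) and let $\mathbf{w}$ be the corresponding left eigenvector (a row vector, all entries positive), normalized so that $\mathbf{w} \cdot \mathbf{z} = 1$. The key classical fact I would invoke is that, since $A$ is primitive, $\lambda$ strictly dominates every other eigenvalue in modulus, and
\[
\lambda^{-k} A^k \longrightarrow \mathbf{z}\,\mathbf{w}
\]
as $k \to \infty$, where $\mathbf{z}\,\mathbf{w}$ is the rank-one projection onto the Perron eigenspace. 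Consequently, for a row vector $\mathbf{u}$,
\[
\lambda^{-k}\, \mathbf{u} A^k \longrightarrow (\mathbf{u}\cdot \mathbf{z})\,\mathbf{w}.
\]

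From here both directions follow quickly. If $\mathbf{u} \cdot \mathbf{z} > 0$, then since $\mathbf{w}$ has all entries strictly positive, the limit vector $(\mathbf{u}\cdot\mathbf{z})\mathbf{w}$ has all entries strictly positive; hence for $k$ large, $\lambda^{-k}\mathbf{u}A^k$ is entrywise positive, and multiplying by the positive scalar $\lambda^k$ gives $\mathbf{u}A^k > 0$. Conversely, if $\mathbf{u} A^k > 0$ for all large $k$, then $\lambda^{-k}\mathbf{u}A^k \geq 0$ for all large $k$, so the limit $(\mathbf{u}\cdot\mathbf{z})\mathbf{w}$ is entrywise $\geq 0$; since $\mathbf{w} > 0$ and $\mathbf{w} \neq 0$, we must have $\mathbf{u}\cdot\mathbf{z} \geq 0$. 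To upgrade to strict inequality: if $\mathbf{u}\cdot\mathbf{z} = 0$, then $\mathbf{u}$ lies in the span of the generalized eigenspaces for the non-Perron eigenvalues (equivalently $\mathbf{u}\,(I - \mathbf{z}\mathbf{w}) = \mathbf{u}$), and on that invariant subspace the spectral radius of $A$ is some $\mu < \lambda$; one then shows $\mathbf{u}A^k$ cannot stay entrywise positive — for instance, pairing against $\mathbf{z}$ gives $(\mathbf{u}A^k)\cdot \mathbf{z} = \lambda^k (\mathbf{u}\cdot\mathbf{z}) = 0$, which is impossible for a nonzero nonnegative vector paired against the strictly positive vector $\mathbf{z}$ unless $\mathbf{u}A^k = 0$; and if $\mathbf{u}A^k = 0$ for large $k$ then, $A$ being invertible over $\mathbb{Q}$ when $\det A \neq 0$ (or more carefully, using that $A$ restricted to its eventual image is invertible), one deduces $\mathbf{u}$ is eventually killed, contradicting positivity unless $\mathbf{u}A^k$ was never positive to begin with. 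The cleanest route is the pairing argument: $(\mathbf{u}A^k)\cdot \mathbf{z} = \lambda^k(\mathbf{u}\cdot\mathbf{z})$, so $\mathbf{u}A^k > 0$ forces $\mathbf{u}\cdot\mathbf{z} > 0$ directly, with no case analysis needed.

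The main obstacle — and the point where one must be careful — is the justification that $\lambda^{-k}A^k$ actually converges to the rank-one Perron projection; this is where primitivity (as opposed to mere irreducibility) is essential, since for a periodic irreducible matrix the powers oscillate among the period-$P$ eigenvalues on the spectral circle and no such limit exists. I would cite the standard Perron–Frobenius convergence theorem for primitive matrices rather than reprove it, noting that primitivity is equivalent to some power $A^k$ being strictly positive as recalled just before this lemma. With that in hand, the argument via the identity $(\mathbf{u}A^k)\cdot\mathbf{z} = \lambda^k(\mathbf{u}\cdot\mathbf{z})$ handles the ``only if'' direction immediately, and the limit handles the ``if'' direction, so the proof is short.
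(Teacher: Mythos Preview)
Your proof is correct and follows the standard Perron--Frobenius argument. Note, however, that the paper does not actually prove this lemma: it is quoted directly from \cite[Lemma~7.3.8]{Lind-Marcus2021} as a ``deep statement in linear algebra'' and used as a black box. Your argument via the convergence $\lambda^{-k}A^k \to \mathbf{z}\,\mathbf{w}$ for the ``if'' direction, together with the clean eigenvector pairing $(\mathbf{u}A^k)\cdot\mathbf{z} = \lambda^k(\mathbf{u}\cdot\mathbf{z})$ for the ``only if'' direction, is exactly the standard route and is essentially what one finds in Lind--Marcus.
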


We can now determine the positive cone of an invertible primitive graph. 

\begin{cor} \label{halfspace} Let $E$ be a finite graph with no sinks with $|E^0|=r$. If $E$ is primitive and $\det(A_E) \not =0$, then 
\begin{equation}\label{perronformula}
K_0^{\gr}(L_\K(E))^+ = \left\{ {\bf u} \in  K_0^{\gr}(L_\K(E)) \, \Big \vert \,  {\mathbf u} \cdot {\bf z} >  0 \right\} \bigcup \, \{0\},
\end{equation}
 where $\bf{z}$ is a right Perron-Frobenius eigenvector for $A_E$.
\end{cor}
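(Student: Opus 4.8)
The plan is to realize $K_0^{\gr}(L_\K(E))$ concretely as the dimension group $\Delta_{A_E} \subseteq \mathbb{Q}^r$ of Corollary~\ref{alternativeformdel}, and then transport Lemma~\ref{LM} across this identification. First I would invoke the isomorphism $\beta_E : K_0^{\gr}(L_\K(E)) \to G_{A_E}$ from \eqref{ncjgloria} composed with the inverse of $\psi_A : \Delta_{A_E} \to G_{A_E}$, which is an order-preserving $\mathbb{Z}[x,x^{-1}]$-module isomorphism. Since $\det(A_E)\neq 0$, Corollary~\ref{alternativeformdel} gives $\Delta_{A_E} = \bigcup_{k\geq 0}\mathbb{Z}^r A_E^{-k}$ as a subgroup of $\mathbb{Q}^r$, with positive cone $\Delta_{A_E}^+ = \{v \in \Delta_{A_E} \mid vA_E^l \in \mathbb{N}^r \text{ for some } l\}$. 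Under these identifications, showing \eqref{perronformula} reduces to proving that for $v \in \Delta_{A_E} \subseteq \mathbb{Q}^r$ with $v\neq 0$, one has $vA_E^l \in \mathbb{N}^r$ for some $l$ if and only if $v\cdot \mathbf{z} > 0$.

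The "if and only if" then follows almost directly from Lemma~\ref{LM}: applied to the row vector $\mathbf{u} = v \in \mathbb{Q}^r$, it says $vA_E^k$ is eventually positive (i.e.\ strictly positive in all coordinates for $k$ large) precisely when $v\cdot \mathbf{z} > 0$. The forward direction of what we need is: if $vA_E^l \in \mathbb{N}^r$ for some $l$, then $v\cdot\mathbf{z} > 0$; here I would argue that $vA_E^l$ is a nonnegative vector that is nonzero (because $v\neq 0$ and $A_E$ is invertible over $\mathbb{Q}$), so $vA_E^l \cdot \mathbf{z} > 0$ since $\mathbf{z}$ is a strictly positive Perron--Frobenius eigenvector; but $vA_E^l\cdot \mathbf{z} = \lambda^l (v\cdot\mathbf{z})$ where $\lambda > 0$ is the Perron eigenvalue, hence $v\cdot\mathbf{z} > 0$. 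Conversely if $v\cdot\mathbf{z} > 0$, Lemma~\ref{LM} gives that $vA_E^k$ is eventually strictly positive, and for $k$ large enough it also lies in $\mathbb{Z}^r$ (since $v\in\Delta_{A_E}$), hence $vA_E^k \in \mathbb{N}^r$, so $v\in\Delta_{A_E}^+$. This establishes the set equality, with the isolated point $0$ handled separately since the zero vector is in the positive cone but fails the strict inequality.

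The one subtlety worth flagging is the compatibility of the order units and the precise matching of positive cones through the chain $K_0^{\gr}(L_\K(E)) \cong G_{A_E} \cong \Delta_{A_E}$: I would note that $\psi_A$ and $\beta_E$ are both stated to be order-preserving, and since they are isomorphisms of ordered groups the positive cone of one maps onto the positive cone of the other, so the characterization \eqref{perronformula}---which is manifestly invariant under order-isomorphism once phrased in terms of the pairing with $\mathbf{z}$---transfers back to $K_0^{\gr}(L_\K(E))$. One should be slightly careful that the pairing $\mathbf{u}\cdot\mathbf{z}$ in \eqref{perronformula} is to be understood via the embedding of $K_0^{\gr}(L_\K(E))$ into $\mathbb{Q}^r$ afforded by $\psi_A^{-1}\circ\beta_E$, which is implicit in the statement.

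I expect the main obstacle to be essentially bookkeeping rather than conceptual: making sure the Perron--Frobenius eigenvalue $\lambda$ is positive and $\mathbf{z}$ strictly positive (which requires primitivity, or at least irreducibility, and is exactly why the hypothesis is imposed), and making sure that "eventually positive" in Lemma~\ref{LM} can be combined with "eventually integral" from membership in $\Delta_{A_E}$ by simply taking $k$ large enough to satisfy both conditions simultaneously. There is no hard analysis here; the corollary is a clean repackaging of Lemma~\ref{LM} once the dimension-group picture is in place.
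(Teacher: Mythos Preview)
Your proposal is correct and follows essentially the same approach as the paper: identify $K_0^{\gr}(L_\K(E))$ with $\Delta_{A_E}\subseteq\mathbb{Q}^r$ via Corollary~\ref{alternativeformdel} and apply Lemma~\ref{LM} to characterize the positive cone. The only minor variation is in the forward inclusion: the paper boosts a nonzero nonnegative vector $\mathbf{u}A^k$ to a strictly positive one by multiplying by a further power of $A_E$ (using primitivity) before invoking Lemma~\ref{LM}, whereas you use the eigenvector identity $vA_E^l\cdot\mathbf{z}=\lambda^l(v\cdot\mathbf{z})$ directly---both arguments are valid and equally short.
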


\begin{proof}
First, using the $\Delta_{A_E}$ model for the graded Grothendieck group, Corollary~\ref{alternativeformdel} implies  $K_0^{\gr}(L_\K(E)) \subseteq \mathbb Q^r$. Now by (\ref{posiposi8}), we shall collect vectors $\mathbf{u} \in K_0^{\gr}(L_\K(E))$ such that $\mathbf{u} A^k \in \mathbb N^r$, for some $k$.  If $\mathbf{u}A^k \in {\mathbb N^+ }^r$ (i.e., $\mathbf{u}$ is  eventually positive), then by  Lemma~\ref{LM} $\mathbf{u} \cdot \mathbf{z} >0$. On the other hand if $\mathbf{u} A^k =0$, since $A$ is invertible,  $\mathbf{u}=0$. The only case remained is if $\mathbf{u} A^k\in \mathbb N^r$, i.e., a vector with some (but not all) entries $0$.  Since $A$ is primitive, there is an $l\in \mathbb N$ such that all the entries of $A^l$ are positive. Then $\mathbf{u} A^{k+l} \in {\mathbb N^+ }^r$, and so 
$\mathbf{u} \cdot \mathbf{z} >0$. The reverse inclusion of (\ref{perronformula}) is immediate. 
 \end{proof}
 
 \begin{rmk}
 We remark that (\ref{perronformula}) is not in general equal to the set $$\left\{ {\bf u} \in  K_0^{\gr}(L_\K(E)) \, \Big \vert \,  {\mathbf u} \cdot {\bf z} \geq  0 \right\}.$$ For example, one can show that in such cases as $E^1_5, E^1_9, E^1_{13}, E^1_{14}, E^1_{16}$ the equality indeed hold, but for the graph  $E^7_1$ this is not the case. 
 \end{rmk}

Throughout the paper we will use the Williams' in-split and out-split moves as well as source elimination for graphs. It is now established that in-split, out-split and source eliminations preserve the graded Morita theory on the level of Leavitt path algebras. We refer the reader to~\cite{AALP2, AP, Lind-Marcus2021, HazaratDyn2013} for these concepts and the results. However in one case we need a generalised version of these now classical constructions due to Eilers and Ruiz~\cite{ER} that we include here for the convenience  of the reader. 

\begin{defn}[Move $(I-)$: Generalised in-splitting] Let $E = (E^0,E^1, s_E, r_E)$ be a graph and let
$w\in E^0$ be a regular vertex. Partition $r^{-1}(w)$ as a finite disjoint union of (possibly empty)
subsets,
\[r^{-1}(w) = \mathcal{E}_1 \sqcup \mathcal{E}_2 \sqcup \cdots \sqcup \mathcal{E}_n.\]
Let $E_I =(E^0_I,E^1_I, s_{E_I}, r_{E_I} )$ be the graph defined by
\[E^0_I= \big \{v^1 \, \mid  \, v \in E^0\backslash \{ w \} \big \}  \sqcup \big \{w^1, w^2, \ldots , w^n \big \} \]
\[E^1_I = \big \{e^1 \, \mid  \, e \in E^1, \, s_E(e) \neq w \big \} \sqcup \big \{e^1, e^2,\ldots, e^n \, \mid  \, e \in E^1, s_E(e) = w\big \} \]
\[ s_{E_I} (e^i) = \begin{cases}
    s_E(e)^1 & e \in E^1, \, s_E(e) \neq w\\
    w^i & e \in E^1, \, s_E(e) = w\\
\end{cases} \]

\[ r_{E_I} (e^i) = \begin{cases}
    r_E(e)^1 & e \in E^1, \, r_E(e) \neq w\\
    w^j & e \in E^1, \, r_E(e) = w, \, e \in \mathcal{E}_j\\
\end{cases} \]

We say $E_I$ is formed by performing move $(I-)$ to $E$.
\end{defn}

\begin{defn}[Move $(I+)$: Unital in-splitting] \label{unitalinspliII}
The graphs $E$ and $F$ are said to be move
$(I+)$ equivalent if there exists a graph $G$ and a regular vertex $w \in G^0$ such that $E$ is the
result of an $(I-)$ move applied to $G$ via a partition of $r_G^{-1}(w)$ using $n$ sets and $F$ is the
result of an $(I-)$ move applied to $G$ via a partition of $r_G^{-1}(w)$ using $n$ sets.
\end{defn}

In \cite{ER} it was shown that unital in-splitting preserves the class of graph $C^*$-algebras up to gauge actions. A similar proof shows that unital in-splitting preserves the graded isomorphism class of the Leavitt path algebras.

\begin{example}\label{gettingclose343}
Consider the following graphs $A$ and $B$:
\begin{figure}[H]
\centering
\begin{tikzpicture}[scale=3.5]
\draw (-0.5, 0) node{$A:$};
\fill (0,0)  circle[radius=.6pt];
\fill (.5,0)  circle[radius=.6pt];
\fill (0,-.5)  circle[radius=.6pt];
\fill (.5,-.5)  circle[radius=.6pt];
\draw[->, shorten >=5pt] (0,0) to (.5,0);
\draw[->, shorten >=5pt] (0,0) to (0,-.5);
\draw[->, shorten >=5pt] (0,-.5) to (.5,-.5);
\draw[->, shorten >=5pt] (.5,0) to (.5,-.5);
\draw[->, shorten >=5pt] (.5,-.5) to (0,0);
\draw[->, shorten >=5pt] (.5,-.5) to[in=-45, out=-135] (0,-.5);
\draw[->, shorten >=5pt] (.5,-.5) to[in=-45, out=45] (.5,0);
\draw[->, shorten >=5pt] (0,0) to[in=175,out=95, loop] (0,0);
\draw[->, shorten >=5pt] (0,-.5) to[in=265,out=185, loop] (0,-.5);
\end{tikzpicture}
\hspace{10pt}
\begin{tikzpicture}[scale=3.5]
\draw (-0.5, 0) node{$B:$};
\fill (0,0)  circle[radius=.6pt];
\fill (-.35,-.61)  circle[radius=.6pt];
\fill (.35,-.61)  circle[radius=.6pt];
\fill (0,-.61)  circle[radius=.6pt];
\draw[->, shorten >=5pt] (0,0) to (.35,-.61);
\draw[->, shorten >=5pt] (.35,-.61) to [in=0,out=60] (0,0);
\draw[->, shorten >=5pt] (-.35,-.61) to (0,0);
\draw[->, shorten >=5pt] (0,0) to [in=120,out=180
] (-.35,-.61) ;
\draw[->, shorten >=5pt] (0,0) to[in=130,out=50, loop] (0,0);
\draw[->, shorten >=5pt] (-.35,-.61) to[in=250,out=170, loop] (-.35,-.61);
\draw[->, shorten >=5pt] (0,-.61) to (-.35,-.61);
\draw[->, shorten >=5pt] (0,-.61) to (.35,-.61);
\draw[->, shorten >=5pt] (0,-.61) to (0,0);
\end{tikzpicture}
\end{figure}
The Leavitt path algebras associated with these graphs are graded isomorphic, as they are both generalised in-splits of the graph below:
\begin{figure}[H]
\begin{tikzpicture}[scale=3.5]
\fill (0,0)  circle[radius=.6pt];
\fill (-.35,-.61)  circle[radius=.6pt];
\fill (.35,-.61)  circle[radius=.6pt];
\draw (0,-.15) node{$v$};
\draw (-.70, 0) node{$E^1_{12}:$};
\draw (.15,.16) node{$f$};
\draw (-.175,-.4) node{$e$};
\draw (.45,-.4) node{$g$};
\draw[->, shorten >=5pt] (0,0) to (.35,-.61);
\draw[->, shorten >=5pt] (.35,-.61) to [in=0,out=60] (0,0);
\draw[->, shorten >=5pt] (-.35,-.61) to (0,0);
\draw[->, shorten >=5pt] (0,0) to [in=120,out=180
] (-.35,-.61) ;
\draw[->, shorten >=5pt] (0,0) to[in=130,out=50, loop] (0,0);
\draw[->, shorten >=5pt] (-.35,-.61) to[in=250,out=170, loop] (-.35,-.61);
\end{tikzpicture}
\end{figure}
The graph $A$ is obtained from $E^1_{12}$ by partitioning $s^{-1}(v)$   to two sets $\{ f \}$  and $\{ e,g  \}$, whereas  $B$ is obtained from the partition of $s^{-1}(v)$ to an empty set and $\{ f, e, g  \}$. 
\end{example}

\section{Williams' conjecture for small  graphs}\label{shiftequivalent}

We start by calculating the graded Grothendieck groups, their positive cones (i.e, the talented monoids) and the action of $\mathbb Z$ on them, for each of the graphs above. We then show that if for two graphs these $K$-groups coincide, i.e., when the adjacency matrices are shift equivalent, then we are able to transform one graph to another with a sequence of in-split and out-split moves. That shows that they are indeed  strongly shift equivalent and consequently the Leavitt path algebras are graded Morita equivalent.  Despite being ``small graphs'', we will see that there are cases among these graphs that shift equivalence does not imply the elementary shift equivalence and one requires a chain of in/out split to transform the graphs. We follow the grouping of Table~\ref{tablegray}. 

{\bf 1.} Consider the graph $E_1^1$ below with the adjacency matrix $A_{E_1^1}= \begin{bmatrix} 0 & 1 & 0 \\ 0 & 0 & 1 \\ 1 & 0 & 1 \\ \end{bmatrix}.$

 \begin{figure}[H]
\centering
\begin{tikzpicture}[scale=3.5]
\fill (0,0)  circle[radius=.6pt];
\fill (-.35,-.61)  circle[radius=.6pt];
\fill (.35,-.61)  circle[radius=.6pt];
\draw (-.70, 0) node{$E_1^1:$};
\draw (0, -0.15) node{$v_1$};
\draw (-.35+.13,-.535) node{$v_3$};
\draw (.35-.13,-.535) node{$v_2$};
\draw[->, shorten >=5pt] (0,0) to (.35,-.61);
\draw[->, shorten >=5pt] (.35,-.61) to (-.35,-.61);
\draw[->, shorten >=5pt] (-.35,-.61) to (0,0);
\draw[->, shorten >=5pt] (-.35,-.61) to[in=250,out=170, loop] (-.35,-.61);
\end{tikzpicture}
\end{figure}

Since $\det(E^1_1) = 1 $ and $E^1_1$ is primitive,  Lemma~\ref{DL} and Corollary~\ref{halfspace} give: 
 \begin{align*}    
 K_0^{\gr}(L_\K(E^1_1)) &= \mathbb{Z}^3, \\
    K_0^{\gr}(L_\K(E^1_1))^+ &=\left\{ {\bf u} \in \mathbb{Z}^3 \, | \, {\bf u} \cdot {\bf z} > 0 \right\} \cup \, \{0\},  \text{ where  } \bf{z} \text{ is } \left[
\begin{array}{c}
 \frac{1}{3} \left(-2+\sqrt[3]{\frac{1}{2}(29-3 \sqrt{93}}+\sqrt[3]{\frac{1}{2} \left(29+3
   \sqrt{93}\right)}\right) \\ \sqrt[3]{\frac{1}{18}
   \left(9+\sqrt{93}\right)}-\sqrt[3]{\frac{2}{3
   \left(9+\sqrt{93}\right)}} \\ 1 \\
\end{array}
\right],\\
{}^1(a,b,c) & = (a,b,c) A_{E_1^1}=(c, a, b+c).
\end{align*}

{\bf 2.} Consider the graph $E_2^1$ below with the adjacency matrix $A_{E_2^1}= \begin{bmatrix} 1 & 1 & 0 \\ 0 & 0 & 1 \\ 1 & 0 & 1 \\ \end{bmatrix}$.
 
 \begin{figure}[H]
\centering
\begin{tikzpicture}[scale=3.5]
\fill (0,0)  circle[radius=.6pt];
\fill (-.35,-.61)  circle[radius=.6pt];
\fill (.35,-.61)  circle[radius=.6pt];
\draw (0,-.15) node{$v_1$};
\draw (-.70, 0) node{$E^1_2:$};
\draw (-.35+.13,-.535) node{$v_3$};
\draw (.35-.13,-.535) node{$v_2$};
\draw[->, shorten >=5pt] (0,0) to (.35,-.61);
\draw[->, shorten >=5pt] (.35,-.61) to (-.35,-.61);
\draw[->, shorten >=5pt] (-.35,-.61) to (0,0);
\draw[->, shorten >=5pt] (0,0) to[in=130,out=50, loop] (0,0);
\draw[->, shorten >=5pt] (-.35,-.61) to[in=250,out=170, loop] (-.35,-.61);
\end{tikzpicture}
\end{figure}

Since $\det(E^1_2) = 1 $ and $E^1_2$ is primitive,  Lemma~\ref{DL} and Corollary~\ref{halfspace} give: 
\begin{align*}    
K_0^{\gr}(L_\K(E^1_2)) &= \mathbb{Z}^3, \\
K_0^{\gr}(L_\K(E^1_2))^+ & =\left\{ {\bf u} \in  \mathbb{Z} ^3 \, | \, {\bf u} \cdot {\bf z} > 0 \right\}  \cup \, \{0\}, \text{ where }\bf{z} \text{ is } \left[
\begin{array}{c}
 \frac{1}{3} \left(-1+\sqrt[3]{\frac{1}{2} \left(25-3
   \sqrt{69}\right)}+\sqrt[3]{\frac{1}{2} \left(25+3
   \sqrt{69}\right)}\right) \\ 
   \frac{1}{3} \left(1-5
   \sqrt[3]{\frac{2}{11+3 \sqrt{69}}}+\sqrt[3]{\frac{1}{2}
   \left(11+3 \sqrt{69}\right)}\right) \\ 1 \\
\end{array}
\right],\\
{}^1(a,b,c) &=(a,b,c) A_{E_2^1}=  (a+c, a, b+c). 
 \end{align*}    

{\bf 3.} Consider the graph $E^1_3$ below with the adjacency matrix $A_{E^1_3}=\begin{bmatrix} 1 & 1 & 0 \\ 0 & 1 & 1 \\ 1 & 0 & 1 \\ \end{bmatrix}$.

\begin{figure}[H]
\centering
\begin{tikzpicture}[scale=3.5]
\fill (0,0)  circle[radius=.6pt];
\fill (-.35,-.61)  circle[radius=.6pt];
\fill (.35,-.61)  circle[radius=.6pt];
\draw (0,-.15) node{$v_1$};
\draw (-.70, 0) node{$E^1_3:$};
\draw (-.35+.13,-.535) node{$v_3$};
\draw (.35-.13,-.535) node{$v_2$};
\draw[->, shorten >=5pt] (0,0) to (.35,-.61);
\draw[->, shorten >=5pt] (.35,-.61) to (-.35,-.61);
\draw[->, shorten >=5pt] (-.35,-.61) to (0,0);
\draw[->, shorten >=5pt] (0,0) to[in=130,out=50, loop] (0,0);
\draw[->, shorten >=5pt] (-.35,-.61) to[in=250,out=170, loop] (-.35,-.61);
\draw[->, shorten >=5pt] (.35,-.61) to[in=10,out=-70, loop] (-.35,-.61);
\end{tikzpicture}
\end{figure}

We have that $\det(E^1_3) = 2 $. In order to calculate $K_0^{\gr}(L_\K(E^1_3))$, we will use the alternative description of dimension group from Corollary~\ref{alternativeformdel}. 

\begin{prop} \label{profidsh6} We have 
    $\Delta_{A_{E^1_3}} = \mathbb{Z}\left[ \frac{1}{2} \right] \begin{bmatrix} 1 & 1 & 1 \end{bmatrix} \oplus 0 \times \mathbb{Z}^2$.
\end{prop}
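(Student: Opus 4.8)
The plan is to invoke Corollary~\ref{alternativeformdel}: since $\det(A_{E^1_3}) = 2 \neq 0$, the matrix $A := A_{E^1_3}$ is invertible over $\mathbb{Q}$ and $\Delta_{A} = \bigcup_{k \geq 0} \mathbb{Z}^3 A^{-k}$, an increasing union of lattices all containing $\mathbb{Z}^3$. Write $M := \mathbb{Z}[\tfrac{1}{2}](1,1,1) + \mathbb{Z}^3$; because $(1,0,0) = (1,1,1) - (0,1,0) - (0,0,1)$, this subgroup of $\mathbb{Q}^3$ equals $\mathbb{Z}[\tfrac{1}{2}]\begin{bmatrix}1 & 1 & 1\end{bmatrix} \oplus 0 \times \mathbb{Z}^2$, so it suffices to prove $\Delta_A = M$ as subsets of $\mathbb{Q}^3$.

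For $M \subseteq \Delta_A$ the key observation is that the row vector $(1,1,1)$ is a left eigenvector of $A$ for the eigenvalue $2$, i.e. $(1,1,1)A = 2(1,1,1)$ (equivalently, every column of $A$ sums to $2$). Hence $\tfrac{1}{2^k}(1,1,1)\,A^k = (1,1,1) \in \mathbb{Z}^3$ for every $k \geq 0$, so $\tfrac{1}{2^k}(1,1,1) \in \Delta_A$; since also $\mathbb{Z}^3 = \mathbb{Z}^3 A^0 \subseteq \Delta_A$ and $\Delta_A$ is a subgroup of $\mathbb{Q}^3$, we get $M \subseteq \Delta_A$.

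For the reverse inclusion I would show that $M$ is stable under the linear map $v \mapsto vA^{-1}$ on row vectors; then $\mathbb{Z}^3 \subseteq M$ forces $\mathbb{Z}^3 A^{-k} \subseteq M$ for all $k$ by induction, whence $\Delta_A = \bigcup_k \mathbb{Z}^3 A^{-k} \subseteq M$. Concretely $A^{-1} = \tfrac{1}{2}\begin{bmatrix} 1 & -1 & 1 \\ 1 & 1 & -1 \\ -1 & 1 & 1 \end{bmatrix}$, and one checks on the three generators of $M$ that $(1,1,1)A^{-1} = \tfrac{1}{2}(1,1,1)$ (so the line $\mathbb{Z}[\tfrac{1}{2}](1,1,1)$ maps into itself, using $\mathbb{Z}[\tfrac{1}{2}]\cdot\tfrac{1}{2} = \mathbb{Z}[\tfrac{1}{2}]$), while $(0,1,0)A^{-1} = \tfrac{1}{2}(1,1,-1) = \tfrac{1}{2}(1,1,1) - (0,0,1) \in M$ and $(0,0,1)A^{-1} = \tfrac{1}{2}(-1,1,1) = -\tfrac{1}{2}(1,1,1) + (0,1,0) + (0,0,1) \in M$. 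Thus $MA^{-1} \subseteq M$, which together with the previous paragraph gives $\Delta_A = M$.

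The computation is entirely routine; the only points requiring care are keeping straight the convention that $A$ acts on row vectors from the right and getting $A^{-1}$ correct, since the stability of $M$ hinges on exactly which coordinates pick up the factor $\tfrac{1}{2}$. There is no real obstacle: once one notices that $(1,1,1)$ is a left eigenvector with eigenvalue $2 = \det A$, the $\mathbb{Z}[\tfrac{1}{2}]$-summand appears for free, and the remaining rank-two part of the dimension group stays integral because $A^{-1}$ introduces denominators only in the $(1,1,1)$-direction.
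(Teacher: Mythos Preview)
Your proof is correct and follows essentially the same approach as the paper: both use Corollary~\ref{alternativeformdel} to write $\Delta_A=\bigcup_k \mathbb{Z}^3A^{-k}$, note that $(1,1,1)$ is a left eigenvector of $A$ with eigenvalue~$2$, and verify the reverse inclusion by tracking how $A^{-1}$ acts. Your packaging is slightly cleaner---checking once that $MA^{-1}\subseteq M$ on generators and then iterating---whereas the paper carries out the same computation as an explicit induction on~$k$, but the underlying calculation and the key observation are identical.
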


\begin{proof}
First note that
$A_{E^1_3}^{-1}=\frac{1}{2}\begin{bmatrix}
 1 & -1 & 1 \\
 1 & 1 & -1 \\
 -1 & 1 & 1 
\end{bmatrix}.$
Consider $$v=\frac{a}{2^k}\begin{bmatrix} 1 & 1 & 1 \end{bmatrix} + \begin{bmatrix} 0 & b & c \end{bmatrix} \in \mathbb{Z}\left[ \frac{1}{2} \right] \begin{bmatrix} 1 & 1 & 1 \end{bmatrix} \oplus 0 \times \mathbb{Z}^2,$$ where $a,b,c \in \mathbb{Z}$ and $k\in \mathbb N$. Since  
$$\begin{bmatrix} 1 & 1 & 1 \end{bmatrix} \begin{bmatrix}
 1 & -1 & 1 \\
 1 & 1 & -1 \\
 -1 & 1 & 1 
\end{bmatrix} = \begin{bmatrix} 1 & 1 & 1 \end{bmatrix},$$
we have 
\begin{align*} 
&\frac{a}{2^k} \begin{bmatrix} 1 & 1 & 1 \end{bmatrix} A_{E^1_3}^k  
= \frac{a}{2^k} \begin{bmatrix} 1 & 1 & 1 \end{bmatrix} \begin{bmatrix}
 1 & -1 & 1 \\
 1 & 1 & -1 \\
 -1 & 1 & 1 
\end{bmatrix}^k A_{E^1_3}^k \\ 
= & a \begin{bmatrix} 1 & 1 & 1 \end{bmatrix} \left( \frac{1}{2} \begin{bmatrix}
 1 & -1 & 1 \\
 1 & 1 & -1 \\
 -1 & 1 & 1 
\end{bmatrix} \right)^k A_{E^1_3}^k 
= a \begin{bmatrix} 1 & 1 & 1 \end{bmatrix} A_{E^1_3}^{-k} A_{E^1_3}^k \\ 
=& a \begin{bmatrix} 1 & 1 & 1 \end{bmatrix}. 
\end{align*}
Thus $\frac{a}{2^k}\begin{bmatrix} 1 & 1 & 1 \end{bmatrix}$ is an element of $\Delta_{A_{E^1_3}}$. The vector $ \begin{bmatrix} 0 & b & c \end{bmatrix}$ is also an element of $\Delta_{A_{E^1_3}}$ as $\mathbb{Z}^3 \subseteq \Delta_{A_{E^1_3}}$.\\
Hence  
$$ \mathbb{Z} \left[ \frac{1}{2} \right]\begin{bmatrix} 1 & 1 & 1 \end{bmatrix} \oplus 0 \times \mathbb{Z}^2 \subseteq \Delta_{A_{E^1_3}}.$$

For the reverse inclusion, let $v \in \Delta_{A_{E^1_3}}$. Then by Corollary~\ref{alternativeformdel}, 
$v \in \mathbb{Z}^3A_{E^1_3}^{-k}$, for some $k \geq 0$. 
We will prove by induction  that $v \in \mathbb{Z} \left[ \frac{1}{2} \right] \begin{bmatrix} 1 & 1 & 1 \end{bmatrix} \oplus 0 \times \mathbb{Z}^2$. 

When $k=0$, $v \in \mathbb{Z}^3 \subseteq \mathbb{Z} \begin{bmatrix} 1 & 1 & 1 \end{bmatrix} \oplus 0 \times \mathbb{Z}^2 \subseteq \mathbb{Z} \left[ \frac{1}{2} \right]\begin{bmatrix} 1 & 1 & 1 \end{bmatrix} \oplus 0 \times \mathbb{Z}^2$.

When $k=1$,  we can write $vA_{E^1_3}=a\begin{bmatrix} 1 & 1 & 1 \end{bmatrix} + \begin{bmatrix} 0 & b & c\end{bmatrix}$, where $a,b,c \in \mathbb{Z}$. Thus 
\begin{align*}
v=&\frac{a}{2}\begin{bmatrix} 1 & 1 & 1 \end{bmatrix} + \frac{1}{2} \begin{bmatrix} b-c & b+c & -b+c\end{bmatrix}\\ =& \frac{a}{2}\begin{bmatrix} 1 & 1 & 1 \end{bmatrix} + \frac{1}{2} \begin{bmatrix} b-c & b-c & b-c \end{bmatrix} + \frac{1}{2} \begin{bmatrix} 0 & 2c & 2b\end{bmatrix} \\
=& \frac{a+(b-c)}{2}\begin{bmatrix} 1 & 1 & 1 \end{bmatrix} + \frac{1}{2} \begin{bmatrix} 0 & 2c & 2b\end{bmatrix} \\ =& \frac{a+(b-c)}{2}\begin{bmatrix} 1 & 1 & 1 \end{bmatrix} + \begin{bmatrix} 0 & c & b\end{bmatrix}.
\end{align*}

Hence, $$v \in \frac{1}{2}\mathbb{Z} \begin{bmatrix} 1 & 1 & 1 \end{bmatrix} \oplus 0 \times \mathbb{Z}^2 \subseteq \mathbb{Z} \left[ \frac{1}{2} \right]\begin{bmatrix} 1 & 1 & 1 \end{bmatrix} \oplus 0 \times \mathbb{Z}^2.$$  
Assume now that  $vA_{E^1_3}^k \in \mathbb{Z}^3$ implies that $v \in \frac{1}{2^k}\mathbb{Z} \begin{bmatrix} 1 & 1 & 1 \end{bmatrix} \oplus 0 \times \mathbb{Z}^2$.
If $vA_{E^1_3}^{k+1} \in \mathbb{Z}^3$, then $vA_{E^1_3}=\frac{a}{2^k}\begin{bmatrix} 1 & 1 & 1 \end{bmatrix} + \begin{bmatrix} 0 & b & c \end{bmatrix} $, where $a,b,c \in \mathbb{Z}$. Thus \begin{align*}
v=&\frac{a}{2^{k+1}}\begin{bmatrix} 1 & 1 & 1 \end{bmatrix} + \frac{1}{2} \begin{bmatrix} b-c & b+c & -b+c\end{bmatrix}\\ =& \frac{a}{2^{k+1}}\begin{bmatrix} 1 & 1 & 1 \end{bmatrix} + \frac{1}{2} \begin{bmatrix} b-c & b-c & b-c \end{bmatrix} + \frac{1}{2}  \begin{bmatrix} 0 & 2c & 2b\end{bmatrix} \\
=& \frac{a+2^k(b-c)}{2^{k+1}}\begin{bmatrix} 1 & 1 & 1 \end{bmatrix} + \frac{1}{2} \begin{bmatrix} 0 & 2c & 2b\end{bmatrix} \\ =& \frac{a+2^k(b-c)}{2^{k+1}}\begin{bmatrix} 1 & 1 & 1 \end{bmatrix} + \begin{bmatrix} 0 & c & b\end{bmatrix}.
\end{align*} Hence $v \in \frac{1}{2^{k+1}}\mathbb{Z} \begin{bmatrix} 1 & 1 & 1 \end{bmatrix} \oplus 0 \times \mathbb{Z}^2 \subseteq \mathbb{Z} \left[ \frac{1}{2} \right]\begin{bmatrix} 1 & 1 & 1 \end{bmatrix} \oplus 0 \times \mathbb{Z}^2$. So $$\Delta_{A_{E^1_3}} \subseteq \mathbb{Z} \left[ \frac{1}{2} \right]\begin{bmatrix} 1 & 1 & 1 \end{bmatrix} \oplus 0 \times \mathbb{Z}^2,$$ and thus the equality follows. 
\end{proof}

The action of $\mathbb Z[x,x ^{-1}]$ on the graded Grothendieck group is as follows: 
$${}^1\left( \frac{a}{2^{k}}\begin{bmatrix} 1 & 1 & 1 \end{bmatrix} + \begin{bmatrix} 0 & b & c\end{bmatrix} \right)= \frac{a+2^k(b+c)}{2^{k+1}}\begin{bmatrix} 1 & 1 & 1 \end{bmatrix} + \begin{bmatrix} 0 & -b & -c\end{bmatrix}$$

Since $E^1_3$ is primitive, we can use Corollary~\ref{halfspace} for calculating the positive cone.  Therefore we have:

\begin{align*}
K_0^{\gr}(L_\K(E^1_3)) & = \mathbb{Z}\left[ \frac{1}{2} \right] \begin{bmatrix} 1 & 1 & 1 \end{bmatrix} \oplus 0 \times \mathbb{Z}^2, \\ 
 K_0^{\gr}(L_\K(E^1_3))^+& =\left\{ {\bf u} \in \mathbb{Z}\left[ \frac{1}{2} \right] \begin{bmatrix} 1 & 1 & 1 \end{bmatrix} \oplus 0 \times \mathbb{Z}^2 \, | \, {\bf u} \cdot {\bf z} > 0 \right\} \cup \{0\}, \text{ where }\bf{z} \text{ is } \left[
\begin{array}{c}
1\\
1\\
1 \\
\end{array}
\right],\\
{}^1(a,b,c) & =(a,b,c) A_{E_1^3}= (a+c, a+b, b+c).
\end{align*}

{\bf 4.} Consider the graph $E_4^1$ below with the adjacency matrix $A_{E_4^1}=\begin{bmatrix} 0 & 1 & 0 \\ 0 & 0 & 1 \\ 1 & 1 & 0 \\ \end{bmatrix}.$

\begin{figure}[H]
\centering
\begin{tikzpicture}[scale=3.5]
\fill (0,0)  circle[radius=.6pt];
\fill (-.35,-.61)  circle[radius=.6pt];
\fill (.35,-.61)  circle[radius=.6pt];
\draw (0,-.15) node{$v_1$};
\draw (-.70, 0) node{$E^1_4:$};
\draw (-.35+.13,-.535) node{$v_3$};
\draw (.35-.13,-.535) node{$v_2$};
\draw[->, shorten >=5pt] (0,0) to (.35,-.61);
\draw[->, shorten >=5pt] (.35,-.61) to (-.35,-.61);
\draw[->, shorten >=5pt] (-.35,-.61) to [in=-120,out=-60] (.35,-.61) ;
\draw[->, shorten >=5pt] (-.35,-.61) to (0,0);
\end{tikzpicture}
\end{figure}

Since $\det(E^1_4) = 1$ and $E^1_4$ is primitive,  Lemma~\ref{DL} and Corollary~\ref{halfspace} give:
\begin{align*}
K_0^{\gr}(L_\K(E^1_4))&=  \mathbb{Z} ^3, \\
K_0^{\gr}(L_\K(E^1_4))^+&=\left\{ {\bf u} \in \mathbb{Z}^3 \, | \, {\bf u} \cdot {\bf z} > 0 \right\}  \cup \, \{0\}, \text{ where } \bf{z} \text{ is } \left[
\begin{array}{c}
 \frac{1}{3} \left(1-5 \sqrt[3]{\frac{2}{11+3
   \sqrt{69}}}+\sqrt[3]{\frac{1}{2} \left(11+3
   \sqrt{69}\right)}\right) \\ 
   \frac{1}{3}
   \left(-1+\sqrt[3]{\frac{1}{2} \left(25-3
   \sqrt{69}\right)
   }+\sqrt[3]{\frac{1}{2} \left(25+3
   \sqrt{69}\right)}\right) \\ 1 \\
\end{array}
\right]
,\\
{}^1(a,b,c)&=(a,b,c)A_{E_4^1}= (c, a+c, b).
\end{align*}

{\bf 5.} Consider the graph $E^1_5, E^1_{13}, E^1_{14}, E^1_{16}$ below with the adjacency matrices $A_{E^1_5}=\begin{bmatrix} 1 & 1 & 0 \\ 0 & 0 & 1 \\ 1 & 1 & 0 \\ \end{bmatrix},
A_{E^1_{13}}= \begin{bmatrix} 0 & 0 & 1 \\ 1 & 0 & 0 \\ 1 & 0 & 1 \\ \end{bmatrix},  A_{E^1_{14}}= \begin{bmatrix} 1 & 0 & 1 \\ 1 & 0 & 0 \\ 1 & 0 & 0 \\ \end{bmatrix}$ and 
$A_{E^1_{16}}= \begin{bmatrix} 0 & 1 & 1 \\ 0 & 0 & 1 \\ 0 & 1 & 1 \\ \end{bmatrix},$ respectively. 
\begin{figure}[H]
\centering
\begin{tikzpicture}[scale=3.5]
\fill (0,0)  circle[radius=.6pt];
\fill (-.35,-.61)  circle[radius=.6pt];
\fill (.35,-.61)  circle[radius=.6pt];
\draw (0,-.15) node{$v_1$};
\draw (-.70, 0) node{$E^1_5:$};
\draw (-.35+.13,-.535) node{$v_3$};
\draw (.35-.13,-.535) node{$v_2$};
\draw[->, shorten >=5pt] (0,0) to (.35,-.61);
\draw[->, shorten >=5pt] (.35,-.61) to (-.35,-.61);
\draw[->, shorten >=5pt] (-.35,-.61) to [in=-120,out=-60] (.35,-.61) ;
\draw[->, shorten >=5pt] (-.35,-.61) to (0,0);
\draw[->, shorten >=5pt] (0,0) to[in=130,out=50, loop] (0,0);
\draw[draw=white, ->, shorten >=5pt] (-.35,-.61) to [in=250,out=170, loop] (-.35,-.61);
\end{tikzpicture}
\hspace{10pt}
\begin{tikzpicture}[scale=3.5]
\fill (0,0)  circle[radius=.6pt];
\fill (-.35,-.61)  circle[radius=.6pt];
\fill (.35,-.61)  circle[radius=.6pt];
\draw (0,-.15) node{$v_1$};
\draw (-.70, 0) node{$E^1_{13}:$};
\draw (-.35+.13,-.535) node{$v_3$};
\draw (.35-.13,-.535) node{$v_2$};
\draw[->, shorten >=5pt] (.35,-.61) to (0,0);
\draw[->, shorten >=5pt] (-.35,-.61) to (0,0);
\draw[->, shorten >=5pt] (0,0) to [in=120,out=180
] (-.35,-.61) ;
\draw[->, shorten >=5pt] (-.35,-.61) to [in=250,out=170, loop] (-.35,-.61);

\end{tikzpicture}
\\
\begin{tikzpicture}[scale=3.5]
\fill (0,0)  circle[radius=.6pt];
\fill (-.35,-.61)  circle[radius=.6pt];
\fill (.35,-.61)  circle[radius=.6pt];
\draw (0,-.15) node{$v_1$};
\draw (-.70, 0) node{$E^1_{14}:$};
\draw (-.35+.13,-.535) node{$v_3$};
\draw (.35-.13,-.535) node{$v_2$};
\draw[->, shorten >=5pt] (.35,-.61) to (0,0);
\draw[->, shorten >=5pt] (-.35,-.61) to (0,0);
\draw[->, shorten >=5pt] (0,0) to [in=120,out=180
] (-.35,-.61) ;
\draw[->, shorten >=5pt] (0,0) to[in=130,out=50, loop] (0,0);
\draw[draw=white, ->, shorten >=5pt] (-.35,-.61) to [in=250,out=170, loop] (-.35,-.61);
\end{tikzpicture}
\hspace{10pt}
\vspace{5pt}
\begin{tikzpicture}[scale=3.5]
\fill (0,0)  circle[radius=.6pt];
\fill (-.35,-.61)  circle[radius=.6pt];
\fill (.35,-.61)  circle[radius=.6pt];
\draw (0,-.15) node{$v_1$};
\draw (-.70, 0) node{$E^1_{16}:$};
\draw (-.35+.13,-.535) node{$v_3$};
\draw (.35-.13,-.535) node{$v_2$};
\draw[->, shorten >=5pt] (0,0) to (.35,-.61);
\draw[->, shorten >=5pt] (.35,-.61) to (-.35,-.61);
\draw[->, shorten >=5pt] (-.35,-.61) to [in=-120,out=-60] (.35,-.61) ;
\draw[->, shorten >=5pt] (0,0) to (-.35,-.61) ;
\draw[->, shorten >=5pt] (-.35,-.61) to[in=250,out=170, loop] (-.35,-.61);
\draw[draw=white, ->, shorten >=5pt] (0,0) to[in=130,out=50, loop] (0,0);
\end{tikzpicture}
\end{figure}

The determinants of the adjacency matrices of these graphs are all zero. However, they are the results of an out-split or adding a source vertex to the graph $F^1_1$ below:  
\begin{figure}[H]
\centering
\begin{tikzpicture}[scale=3.5]
\draw (-.60, 0) node{$F^1_{1}:$};
\fill (0,0)  circle[radius=.6pt];
\draw (0,-.2) node{$w_1$};
\fill (.5,0)  circle[radius=.6pt];
\draw (.5,-.2) node{$w_2$};
\draw[->, shorten >=5pt] (0,0) to[in=135, out=-135, loop] (0,0);
\draw[->, shorten >=5pt] (0,0) to[in=120,out=60] (.5,0);
\draw[->, shorten >=5pt] (.5,0) to[in=-60,out=-120] (0,0);
\end{tikzpicture}
\end{figure}

  Since the out-splitting and adding vertices preserve the graded Morita theory, the graded $K$-theory data remains the same.  This allows us to calculate the graded $K$-theory of Leavitt path algebras 
 $E^1_5, E^1_{13}, E^1_{14}$ and $E^1_{16}$, by calculating the graded $K$-theory of $L_\K(F^1_1)$. Since $\det(F^1_1) = -1 $ and $F^1_1$ is primitive, Lemma~\ref{DL} and Corollary~\ref{halfspace} give:
\begin{align*}
 K_0^{\gr}(L_\K(F^1_1)) &=  \mathbb{Z} ^2, \\
 K_0^{\gr}(L_\K(F^1_1))^+ &=\left\{ {\bf u} \in \mathbb{Z}^2 \, | \, {\bf u} \cdot {\bf z} > 0 \right\}  \cup \, \{0\}, \text{ where } \bf{z} \text{  is  }\left[
\begin{array}{c}
 \frac{1}{2} \left(1+\sqrt{5} \right) \\  1 \\
\end{array}
\right],
\\
{}^1(a,b)&=(a+b,a).
\end{align*}

We have: 

\begin{itemize}
    \item[-] $E_5^1$ is obtained from $F^1_1$ by  a maximal out-split at $w_1$.
    \item[-] $E_{13}^1$ is obtained from $F^1_1$ by adding a source and an edge that enters the vertex $w_2$.
        \item[-] $E_{14}^1$ is obtained from $F^1_1$ by adding a source and an edge that enters the vertex $w_1$.
    \item[-] $E_{16}^1$  is obtained from $F^1_1$ by adding a source and two edges that enter the vertices $w_1$ and $w_2$, respectively.
\end{itemize}

Therefore for $E$ being one of the graphs $E^1_5, E^1_{13}, E^1_{14}$ and $E^1_{16}$, we have a graded Morita equivalence $\mbox{Gr-} L_\K(E) \cong \mbox{Gr-} L_\K(F^1_1)$ and  thus the graded $K$-theory data of $L_\K(E)$  coincides with the graded $K$-theory of $L_\K(F^1_1)$. 

Next we show that all these graphs are elementary shift equivalent. 

- $E^1_5$ is elementary shift equivalent to $E^1_{13}$, as $E^1_5 = RS$ and $E^1_{13}=SR$, where
 $$R=\begin{bmatrix}
    0 & 0 & 1 \\
    1 & 0 & 0 \\
    0 & 0 & 1 \\
    \end{bmatrix} \text{ and } 
    S= \begin{bmatrix}
    0 & 0 & 1 \\
    0 & 1 & 0 \\
    1 & 1 & 0 \\
\end{bmatrix}.$$

- $E^1_5$ is elementary shift equivalent to $E^1_{14}$, as $E^1_5 = RS$ and  $E^1_{14}=SR$, where
 $$R=\begin{bmatrix}
    1 & 0 & 0 \\
    0 & 0 & 1 \\
    1 & 0 & 0 \\
    \end{bmatrix}  \text{ and }   S= \begin{bmatrix}
    1 & 1 & 0 \\
    0 & 0 & 1 \\
    0 & 0 & 1 \\
\end{bmatrix}.$$

- $E^1_5$ is elementary shift equivalent to $E^1_{16}$, as $E^1_5 = RS$ and $E^1_{16}=SR$, where
$$R=\begin{bmatrix}
    0 & 0 & 1 \\
    0 & 1 & 0 \\
    0 & 0 & 1 \\
    \end{bmatrix} \text{ and }  S= \begin{bmatrix}
    0 & 1 & 1 \\
    0 & 0 & 1 \\
    1 & 1 & 0 \\
\end{bmatrix}.$$

- $E^1_{13}$ is elementary shift equivalent to $E^1_{14}$, as $E^1_{13} = RS$ and $E^1_{14}=SR$, 
 $$R=\begin{bmatrix}
    1 & 0 & 0 \\
    0 & 0 & 1 \\
    1 & 0 & 1 \\
\end{bmatrix} \text{ and }  S= \begin{bmatrix}
    0 & 0 & 1 \\
    1 & 0 & 0 \\
    1 & 0 & 0 \\
\end{bmatrix}.$$

- $E^1_{13}$ is elementary shift equivalent to $E^1_{16}$, as $E^1_{13} = RS$ and $E^1_{16}=SR$, where 
$$R=\begin{bmatrix}
    0 & 0 & 1 \\
    0 & 1 & 0 \\
    0 & 1 & 1 \\
\end{bmatrix} \text{ and }  S= \begin{bmatrix}
    0 & 0 & 1 \\
    1 & 0 & 0 \\
    0 & 0 & 1 \\
\end{bmatrix}.$$

- $E^1_{14}$ is elementary shift equivalent to $E^1_{16}$, as $E^1_{14} = RS$ and $E^1_{16}=SR$, where
$$R= \begin{bmatrix}
    0 & 0 & 1 \\
    0 & 1 & 0 \\
    0 & 1 & 0 \\
    \end{bmatrix} \text{ and }  S= \begin{bmatrix}
    1 & 0 & 1 \\
    1 & 0 & 0 \\
    1 & 0 & 1 \\
\end{bmatrix}.$$

\medskip 

$\bullet$  \emph{The algebras $L_\K(E^1_5)$, $L_\K(E^1_{13})$ and $L_\K(E^1_{14})$ are not graded isomorphic to each other. }

\medskip

We show that there is no pointed module isomorphism between the corresponding graded $K_0$-groups of these algebras, thus they can't be graded isomorphic to each other.

First note that $E^1_5$ is elementary shift equivalent to $F^1_1$, as $E^1_{5} = R_1S_1$ and $F^1_{1}=S_1R_1$, with 
\[
 R_1=\begin{bmatrix}
    1 & 0 \\ 0 & 1 \\ 1 & 0 \\
    \end{bmatrix}
\text{ and }
  S_1=\ \begin{bmatrix} 1 & 1 & 0 \\ 0 & 0 & 1 \\
 \end{bmatrix}  
 \]

 We also have that $E^1_{13}$ is elementary shift equivalent to $F^1_1$, as $E^1_{13} = R_2S_2$ and $F^1_1=S_2R_2$, with 
  \[
 R_2=\begin{bmatrix}
    1 & 0 \\ 
    0 & 1 \\ 
    1 & 1 \\
    \end{bmatrix}
\text{ and }
  S_2=\ \begin{bmatrix} 
  0 & 0 & 1 \\ 
  1 & 0 & 0 \\
 \end{bmatrix}  
 \]

Following \S\ref{sec:symbolic-dynamics} (see (\ref{thuemas})), the matrices $R_1$ and $R_2$ induce order-preserving module isomorphisms $\phi_{R_1}$ and $\phi_{R_2}$ (diagram below) which send the order units 
 $\begin{bmatrix} 1 & 1 & 1 \end{bmatrix} R_1= \begin{bmatrix} 2 & 1  \end{bmatrix}$ and  $\begin{bmatrix} 1 & 1 & 1 \end{bmatrix} R_2= \begin{bmatrix} 2 & 2  \end{bmatrix}$:
\[
\begin{tikzcd}
K_0^{\gr}(L_\K(E^1_5)) \arrow[r, "\phi_{R_1}" ] \arrow[d, dashed,"\mu" ]
& K_0^{\gr}(L_\K(F^1_1)) \arrow[d, "T" ] \\
K_0^{\gr}(L_\K(E^1_{13})) \arrow[r, "\phi_{R_2}"]
& K_0^{\gr}(L_\K(F^1_1))
\end{tikzcd}
\]

If there was a pointed isomorphism  $\mu: K_0^{\gr}(L_\K(E^1_5)) \rightarrow K_0^{\gr}(L_\K(E^1_{13}))$, there would need to be a $T\in \GL_2( \mathbb{Z})$ such that $ T F^1_1= F^1_1 T$ and $\begin{bmatrix} 2 & 1  \end{bmatrix} T = \begin{bmatrix} 2 & 2  \end{bmatrix}$. Thus for 
 $T=\begin{bmatrix}
a & b \\
c & d 
\end{bmatrix}$ 
 these conditions give: 
\begin{align}
\begin{bmatrix}
1 & 1 \\
1 & 0 
\end{bmatrix}
\begin{bmatrix}
a & b \\
c & d 
\end{bmatrix} & =
\begin{bmatrix}
a & b \\
c & d 
\end{bmatrix}
\begin{bmatrix}
1 & 1 \\
1 & 0 
\end{bmatrix}\label{matrix11}\\ 
\begin{bmatrix}
2 & 1 
\end{bmatrix}
\begin{bmatrix}
a & b \\
c & d 
\end{bmatrix} & =
\begin{bmatrix}
2 & 2 
\end{bmatrix}\label{matrix22}
\end{align}

This yields four equations $2a+b=2$, $2b+d=2$, $a=b+d$ and $b=c$. Solving the equations, we obtain 
$T= \begin{bmatrix}
0 & 2 \\
2 & -2 
\end{bmatrix}$. However $T\not \in \GL_2(\mathbb Z)$. Thus there is no pointed module isomorphism between the algebras and consequently they are not graded isomorphic.

However, this system has no solution, implying that $L_\K(E^2_3)$ can't be graded isomorphic to  $L_\K(E^3_4)$. 

 We  have that $E^1_{14}$ is elementary shift equivalent to $F^1_1$, as $E^1_{14} = R_3S_3$ and $F^1_1=S_3R_3$, with 
  \[
 R_2=\begin{bmatrix}
    1 & 0 \\ 
    0 & 1 \\ 
    0 & 1 \\
    \end{bmatrix}
\text{ and }
  S_2=\ \begin{bmatrix} 
  1 & 0 & 1 \\ 
  1 & 0 & 0 \\
 \end{bmatrix}  
 \]
Similar argument shows $\begin{bmatrix} 1 & 1 & 1 \end{bmatrix} R_3= \begin{bmatrix} 1 & 2  \end{bmatrix}$.  Equation~\ref{matrix11} along with 
$$
\begin{bmatrix}
2 & 1 
\end{bmatrix}
\begin{bmatrix}
a & b \\
c & d 
\end{bmatrix}  =
\begin{bmatrix}
2 & 2 
\end{bmatrix}$$
again leads to a matrix $T$ which is not invertible in $\mathbb Z$ implying there no pointed isomorphism $\mu\colon K_0^{\gr}(L_\K(E^1_5)) \rightarrow K_0^{\gr}(L_\K(E^1_{14}))$. Finally, Equation~\ref{matrix11} along with 
$$
\begin{bmatrix}
2 & 2
\end{bmatrix}
\begin{bmatrix}
a & b \\
c & d 
\end{bmatrix}  =
\begin{bmatrix}
1 & 2 
\end{bmatrix}$$
shows there is no suitable matrix $T$ and so no pointed isomorphism $\mu \colon  K_0^{\gr}(L_\K(E^1_{13})) \rightarrow K_0^{\gr}(L_\K(E^1_{14}))$ exists.

\medskip

$\bullet$  \emph{$L_\K(E^1_5)$ is graded isomorphic to $L_\K(E^1_{16})$.}

\medskip 

First note that $E^1_{16}$ is elementary shift equivalent to $F^1_1$, as $E^1_{16} = R_4S_4$ and $F^1_{1}=S_4R_4$, with 
$$ R_4=\begin{bmatrix}
    1 & 0 \\
    0 & 1 \\
    1 & 0 \\
\end{bmatrix}  \text{ and }
S_4=\begin{bmatrix}
    0 & 1 & 1 \\
    0 & 0 & 1 \\
\end{bmatrix}.$$

Since  $\begin{bmatrix} 1 & 1 & 1 \end{bmatrix} R_4= \begin{bmatrix} 2 & 1  \end{bmatrix}$ and  $\begin{bmatrix} 1 & 1 & 1 \end{bmatrix} R_1= \begin{bmatrix} 2 & 1  \end{bmatrix}$, 
clearly $T=\id$, $2\times 2$-identity matrix,  gives a pointed isomorphism between $K_0^{\gr}(E^1_5)$ and $K_0^{\gr}(E^1_{16})$. 
\[
\begin{tikzcd}
K_0^{\gr}(L_\K(E^1_5)) \arrow[r, "\phi_{R_1}" ] \arrow[d, dashed,"\mu" ]
& K_0^{\gr}(L_\K(F^1_1)) \arrow[d, "T=\id" ] \\
K_0^{\gr}(L_\K(E^1_{16})) \arrow[r, "\phi_{R_4}"]
& K_0^{\gr}(L_\K(F^1_1))
\end{tikzcd}
\]

We know that 
\begin{equation}\label{ldjkd}
L_\K(E^1_5) \cong_{\gr} L_\K(F^1_1),
\end{equation}
as out-splitting preserves the graded isomorphism. Next we show that $L_\K(E^1_{16})$ is graded isomorphic to $L_\K(F^1_1)$. 
From $E^1_{16}$ the maximum out-split  at $v_1$ produces the graph $F^1_2$ below. 
\begin{figure}[H]
\begin{tikzpicture}[scale=3.5]
\fill (-.35,0)  circle[radius=.6pt];
\fill (.35,0)  circle[radius=.6pt];
\fill (-.35,0)  circle[radius=.6pt];
\fill (-.35,-.61)  circle[radius=.6pt];
\fill (.35,-.61)  circle[radius=.6pt];
\draw (-.70, 0) node{$F_1^2:$};
\draw (-.35+.13,-.61) node{$v_3$};
\draw (.35-.13,-.61) node{$v_2$};
\draw[->, shorten >=5pt] (.35,0) to (.35,-.61);
\draw[->, shorten >=5pt] (.35,-.61) to [in=60,out=120] 
 (-.35,-.61);
\draw[->, shorten >=5pt] (-.35,-.61) to [in=-120,out=-60] (.35,-.61) ;
\draw[->, shorten >=5pt] (-.35,0) to (-.35,-.61) ;
\draw[->, shorten >=5pt] (-.35,-.61) to[in=220,out=130, loop] (-.35,-.61);
\draw[draw=white, ->, shorten >=5pt] (0,0) to[in=130,out=50, loop] (0,0);
\end{tikzpicture}
\end{figure}

Thus we have $L_\K(E^1_{16}) \cong_{\gr} L_\K(F^2_1)$. By \cite[Lemma~4.18, Theorem~4.14]{Hisrael} we have 
$$L_\K(F^2_1) \cong_{\gr} \M_2\big(L_\K(F^1_1)\big)\big(0,1\big).$$
We show that 
\begin{equation}\label{hfghfghf}
 \M_2\big(L_\K(F^1_1)\big)\big(0,1\big)  \cong_{\gr} L_\K(F^1_1).
\end{equation}
Recall from Section~\ref{hfjduier87} that we have an $\mathbb Z$-monoid isomorphism 
\begin{equation}\label{data475}
K_0^{\gr}(L_\K(F^1_1))^+ \cong T_{F^1_1}\cong \frac{\big \langle w_1(i), w_2(i) \mid i\in \mathbb Z\big \rangle}{\big \langle w_1(i)=w_1(i+1)w_2(i+1), w_2(i)=w_1(i+1)\big\rangle } .
\end{equation} 
Under this isomorphism,  $[L_\K(F^1_1)]\in  K_0^{\gr}(L_\K(F^1_1))^+$ is represented by $w_1+w_2 \in T_{F^1_1}$. In the talented monoid $T_{F^1_1}$, we have
$$w_1+w_2= w_1(1)+w_2(1)+w_2=w_1(1)+w_2(1)+w_1(1)=w_1(1)+w_2(1)+w_1(2)+w_2(2).$$ Passing this back to $K_0^{\gr}(L_\K(F^1_1))^+$, we get $L_\K(F_1^1)$-module isomorphism 
$$L_\K(F^1_1) \cong_{\gr} L_\K(F^1_1)(1)\bigoplus L_\K(F^1_1) (2).$$
Taking $\End_{L_\K(F^1_1)}$ of both sides, we obtain~(\ref{hfghfghf})
$$L_\K(F^1_1)  \cong_{\gr} \M_2\big(L_\K(F^1_1) \big)\big(-1,-2\big) \cong_{\gr} \M_2\big(L_\K(F^1_1) \big)\big(0,1\big).$$
Thus $$L_\K(E^1_{16}) \cong_{\gr}L_\K(F^2_1) \cong_{\gr} \M_2\big(L_\K(F^1_1) \big)\big(0,1\big) \cong_{\gr} L_\K(F^1_1)\cong L_\K(F^1_5).$$

{\bf 6.} Consider the graphs $E^1_6,  E^1_7, E^1_{15}, E^1_{17}$ below with the adjacency matrices $A_{E^1_6}= \begin{bmatrix} 1 & 1 & 0 \\ 0 & 0 & 1 \\ 1 & 1 & 1 \\ \end{bmatrix}, 
A_{E^1_{7}}= \begin{bmatrix} 1 & 1 & 0 \\ 0 & 1 & 1 \\ 1 & 1 & 0 \\ \end{bmatrix},  A_{E^1_{15}}= \begin{bmatrix} 1 & 0 & 1 \\ 1 & 0 & 0 \\ 1 & 0 & 1 \\ \end{bmatrix}$ and $
A_{E^1_{17}}= \begin{bmatrix} 0 & 1 & 1 \\ 0 & 1 & 1 \\ 0 & 1 & 1 \\ \end{bmatrix}$, respectively.

\begin{figure}[H]
\centering
\begin{tikzpicture}[scale=3.5]
\fill (0,0)  circle[radius=.6pt];
\fill (-.35,-.61)  circle[radius=.6pt];
\fill (.35,-.61)  circle[radius=.6pt];
\draw (0,-.15) node{$v_1$};
\draw (-.70, 0) node{$E^1_6:$};
\draw (-.35+.13,-.535) node{$v_3$};
\draw (.35-.13,-.535) node{$v_2$};
\draw[->, shorten >=5pt] (0,0) to (.35,-.61);
\draw[->, shorten >=5pt] (.35,-.61) to (-.35,-.61);
\draw[->, shorten >=5pt] (-.35,-.61) to [in=-120,out=-60] (.35,-.61) ;
\draw[->, shorten >=5pt] (-.35,-.61) to (0,0);
\draw[->, shorten >=5pt] (0,0) to[in=130,out=50, loop] (0,0);
\draw[ ->, shorten >=5pt] (-.35,-.61) to [in=250,out=170, loop] (-.35,-.61);
\end{tikzpicture}
\hspace{10pt}
\begin{tikzpicture}[scale=3.5]
\fill (0,0)  circle[radius=.6pt];
\fill (-.35,-.61)  circle[radius=.6pt];
\fill (.35,-.61)  circle[radius=.6pt];
\draw (0,-.15) node{$v_1$};
\draw (-.70, 0) node{$E^1_7:$};
\draw (-.35+.13,-.535) node{$v_3$};
\draw (.35-.13,-.535) node{$v_2$};
\draw[->, shorten >=5pt] (0,0) to (.35,-.61);
\draw[->, shorten >=5pt] (.35,-.61) to (-.35,-.61);
\draw[->, shorten >=5pt] (-.35,-.61) to [in=-120,out=-60] (.35,-.61) ;
\draw[->, shorten >=5pt] (-.35,-.61) to (0,0);
\draw[->, shorten >=5pt] (0,0) to[in=130,out=50, loop] (0,0);
\draw[->, shorten >=5pt] (.35,-.61) to [in=10,out=-70, loop] (.35,-.61);
\end{tikzpicture}
\\
\begin{tikzpicture}[scale=3.5]
\fill (0,0)  circle[radius=.6pt];
\fill (-.35,-.61)  circle[radius=.6pt];
\fill (.35,-.61)  circle[radius=.6pt];
\draw (0,-.15) node{$v_1$};
\draw (-.70, 0) node{$E^1_{15}:$};
\draw (-.35+.13,-.535) node{$v_3$};
\draw (.35-.13,-.535) node{$v_2$};
\draw[->, shorten >=5pt] (.35,-.61) to (0,0);
\draw[->, shorten >=5pt] (-.35,-.61) to (0,0);
\draw[->, shorten >=5pt] (0,0) to [in=120,out=180
] (-.35,-.61) ;
\draw[->, shorten >=5pt] (0,0) to[in=130,out=50, loop] (0,0);
\draw[->, shorten >=5pt] (-.35,-.61) to[in=250,out=170, loop] (-.35,-.61);
\end{tikzpicture}
\hspace{10pt}
\begin{tikzpicture}[scale=3.5]
\fill (0,0)  circle[radius=.6pt];
\fill (-.35,-.61)  circle[radius=.6pt];
\fill (.35,-.61)  circle[radius=.6pt];
\draw (0,-.15) node{$v_1$};
\draw (-.70, 0) node{$E^1_{17}$:};
\draw (-.35+.13,-.535) node{$v_3$};
\draw (.35-.13,-.535) node{$v_2$};
\draw[->, shorten >=5pt] (0,0) to (.35,-.61);
\draw[->, shorten >=5pt] (.35,-.61) to (-.35,-.61);
\draw[->, shorten >=5pt] (-.35,-.61) to [in=-120,out=-60] (.35,-.61) ;
\draw[->, shorten >=5pt] (0,0) to (-.35,-.61);
\draw[draw=white, ->, shorten >=5pt] (0,0) to[in=130,out=50, loop] (0,0);
\draw[->, shorten >=5pt] (-.35,-.61) to[in=250,out=170, loop] (-.35,-.61);
\draw[->, shorten >=5pt] (.35,-.61) to[in=10,out=-70, loop] (-.35,-.61);
\end{tikzpicture}
\end{figure}

The determinants of the adjacency matrices of these graphs are all zero. However, they are the results of an out-split, in-split or adding a source vertex to the graph $F^1_2$ below:  
\begin{figure}[H]
\centering
\begin{tikzpicture}[scale=3.5]
\draw (-.60, 0) node{$F^1_{2}:$};

\fill (0,0)  circle[radius=.6pt];
\draw (0,-.2) node{$w_1$};
\fill (.5,0)  circle[radius=.6pt];
\draw (.5,-.2) node{$w_2$};
\draw[->, shorten >=5pt] (0,0) to[in=135, out=-135, loop] (0,0);
\draw[->, shorten >=5pt] (.5,0) to[in=-45,out=45, loop] (.5,0);
\draw[->, shorten >=5pt] (0,0) to[in=120,out=60] (.5,0);
\draw[->, shorten >=5pt] (.5,0) to[in=-60,out=-120] (0,0);
\end{tikzpicture}
\end{figure}
\noindent which in return is the maximal out-split of two petal rose:
\begin{figure}[H]
\centering
\begin{tikzpicture}[scale=3.5]
\fill (0,0) circle[radius=.6pt];
\draw (-.60, 0) node{$R^1_2$:};

\draw[->, shorten >=5pt] (0,0) to[in=45, out=-45, loop] (0,0);
\draw[->, shorten >=5pt] (0,0) to[in=-135,out=135, loop] (0,0);
\end{tikzpicture}

\end{figure}

Thus the graded $K$-theory of Leavitt path algebras of the graphs $E^1_6,  E^1_7, E^1_{15}$ and  $E^1_{17}$ coincide with the graded $K$-theory of the two petal rose $R^1_2$. Since the adjacency matrix $A_{R^1_2}=[2]$, then (\ref{linkkgroup}) (or Colloray~\ref{alternativeformdel}) immediately gives: 
\begin{align*}
K_0^{\gr}(L_\K(R^1_2)) & =  \mathbb{Z}\left[ \frac{1}{2} \right], \\
K_0^{\gr}(L_\K(R^1_2))^+ & =\left\{ {\bf u} \in \mathbb{Z}\left[ \frac{1}{2} \right] \, | \, {\bf u} \geq 0 \right\}= \mathbb{N}\left[ \frac{1}{2} \right],\\ 
{}^1a & =2a.
\end{align*}

We have: 

\phantom{a} 
\begin{itemize}
    \item[-]$E^1_6$  is obtained from $F^1_2$ by a maximal out-split at $w_1$. 
    \item[-]$E^1_7$ is obtained from $F^1_2$ by a maximal in-split at $w_1$. 
    \item[-]$E^1_{15}$ is obtained from $F^1_2$ by adding a source and an edge that enters the vertex $w_1$.
    \item[-]$E^1_{17}$ is obtained from $F^1_2$ by adding a source and two edges that enter the vertices $w_1$ and $w_2$, respectively.
\end{itemize}

Therefore for $E$ being one of the graphs $E^1_6, E^1_{7}, E^1_{15}$ and $E^1_{17}$, we have a graded Morita equivalence $\mbox{Gr-} L_\K(E) \cong \mbox{Gr-} L_\K(R_2)$ and  thus the graded $K$-theory data of $L_\K(E)$  coincides with the graded $K$-theory of $L_\K(R_2)$. 

Next we show that all these graphs are elementary shift equivalent.

- $E^1_{6}$ is elementary shift equivalent to $E^1_{7}$ as $E^1_{6} = RS$ and $E^1_{7}=SR$, where, 
$$R= \begin{bmatrix}
    0 & 1 & 0 \\
    0 & 0 & 1 \\
    1 & 1 & 0 \\
    \end{bmatrix} \text{  and } S= \begin{bmatrix}
    0 & 0 & 1 \\
    1 & 1 & 0 \\
    0 & 0 & 1 \\
\end{bmatrix}.$$

- $E^1_{6}$ is elementary shift equivalent to  $E^1_{15}$ as $E^1_{6} = RS$ and $E^1_{15}=SR$, where,
$$R= \begin{bmatrix}
    0 & 0 & 1 \\
    1 & 0 & 0 \\
    1 & 0 & 1 \\
    \end{bmatrix} \text{ and } 
    S= \begin{bmatrix}
    0 & 0 & 1 \\
    0 & 1 & 0 \\
    1 & 1 & 0 \\
\end{bmatrix}.$$

- $E^1_{6}$ is elementary shift equivalent $E^1_{17}$ as $E^1_{6} = RS$ and $E^1_{17}=SR$, where, 
$$R= \begin{bmatrix}
    0 & 0 & 1 \\
    0 & 1 & 0 \\
    0 & 1 & 1 \\
    \end{bmatrix} \text{ and } S= \begin{bmatrix}
    0 & 0 & 1 \\
    0 & 0 & 1 \\
    1 & 1 & 0 \\
\end{bmatrix}.$$

- $E^1_{7}$ is elementary shift equivalent to   $E^1_{15}$ as $E^1_{7} = RS$ and  $E^1_{15}=SR$, where, 
$$R= \begin{bmatrix}
    0 & 0 & 1 \\
    1 & 0 & 0 \\
    0 & 0 & 1 \\
    \end{bmatrix} \text{ and } S= \begin{bmatrix}
    0 & 1 & 1 \\
    0 & 1 & 0 \\
    1 & 1 & 0 \\
\end{bmatrix}.$$

- $E^1_{7}$ is elementary shift equivalent to $E^1_{17}$ as $E^1_{7} = RS$ and  $E^1_{17}=SR$, where, 
$$R= \begin{bmatrix}
    0 & 0 & 1 \\
    0 & 1 & 0 \\
    0 & 0 & 1 \\
    \end{bmatrix} \text{ and } S= \begin{bmatrix}
    0 & 1 & 1 \\
    0 & 1 & 1 \\
    1 & 1 & 0 \\
\end{bmatrix}.$$

- $E^1_{15}$ is elementary shift equivalent to $E^1_{17}$ as $E^1_{15} = RS$ and $E^1_{17}=SR$, where, 
$$R= \begin{bmatrix}
    0 & 1 & 1 \\
    0 & 0 & 1 \\
    0 & 1 & 1 \\
    \end{bmatrix} \text{  and  } S= \begin{bmatrix}
    0 & 0 & 1 \\
    0 & 0 & 1 \\
    1 & 0 & 0 \\
\end{bmatrix}.$$

\medskip

$\bullet$  \emph{The algebras $L_\K(E^1_6)$, $L_\K(E^1_{15})$ and $L_\K(E^1_{17})$ are not graded isomorphic to each other. }

\medskip 

We show that there is no pointed module isomorphism between the corresponding graded $K_0$-groups of these algebras, thus they can't be graded isomorphic to each other.

First note that $E^1_6$ is elementary shift equivalent to $F^1_2$, as $E^1_{6} = R_1S_1$ and $F^1_{2}=S_1R_1$, with 
\[  R_1=\begin{bmatrix}
    1 & 0 \\
    0 & 1 \\
    1 & 1 \\
    \end{bmatrix} \text{ and }  
S_1=\begin{bmatrix}
    1 & 1 & 0 \\
    0 & 0 & 1 \\
\end{bmatrix}. 
    \]
    
We also have that $E^1_{15}$ is elementary shift equivalent to $F^1_2$, as $E^1_{15} = R_2S_2$ and $F^1_{2}=S_2R_2$, with 
\[  R_2=\begin{bmatrix}
    1 & 1 \\
    0 & 1 \\
    1 & 1 \\
    \end{bmatrix} \text{ and }  
S_2=\begin{bmatrix}
    0 & 0 & 1 \\
    1 & 0 & 0 \\
\end{bmatrix}. 
    \]

We also have that $E^1_{17}$ is elementary shift equivalent to $F^1_2$, as $E^1_{17} = R_3S_3$ and $F^1_{2}=S_3R_3$, with 
\[  R_3=\begin{bmatrix}
    0 & 1 \\
    0 & 1 \\
    1 & 0 \\
    \end{bmatrix} \text{ and }  
S_3=\begin{bmatrix}
    0 & 1 & 1 \\
    0 & 1 & 1 \\
\end{bmatrix}. 
    \]

Finally,  $F^1_{2}$ is elementary shift equivalent to $R_2$, as $F^1_{2} = RS$ and $R_{2}=SR$, with 
\[  R=\begin{bmatrix}
    1 \\
    1 \\
    \end{bmatrix} \text{ and }  
S=\begin{bmatrix}
    1&1
 \end{bmatrix}. 
    \]

For $E$ being one of the graphs $E^1_6, E^1_{15}, E^1_{17}$, the sequence of module isomorphisms 
\[K_0^{\gr}(L_\K(E)) \overset{\phi_{R_i}}{\longrightarrow}  K_0^{\gr}(L_\K(F^1_2)) \overset{\phi_{R}}{\longrightarrow}  K_0^{\gr}(L_\K(R_2))\] send the order units 
\begin{align*}
\phi_R\phi_{R_1}(\begin{bmatrix} 1 & 1 & 1 \end{bmatrix})&=4\\
\phi_R\phi_{R_2}(\begin{bmatrix} 1 & 1 & 1 \end{bmatrix})&=5\\
\phi_R\phi_{R_3}(\begin{bmatrix} 1 & 1 & 1 \end{bmatrix})&=3.
\end{align*}

The numbers $3,4,5$ cannot be multiplied to each other with invertible scalars over $K_0^{\gr}(L_\K(R_2)) \cong \mathbb{Z}\left[ \frac{1}{2} \right].$  Thus, there are no isomorphisms between  the algebras $L_\K(E^1_6)$, $L_\K( E^1_{15})$ and $L_\K(E^1_{17})$. 

\medskip 

$\bullet$  \emph{$L_\K(E^1_7)$ is graded isomorphic to $L_\K(E^1_{17})$.}

\medskip

First note that $E^1_7$ is elementary shift equivalent to $F^1_2$ as  $E^1_7=R_4S_4$ and $F^1_2=S_4R_4$, with 
\[ R_4=\begin{bmatrix}
    1 & 0 \\
    0 & 1 \\
    1 & 0 \\
\end{bmatrix} \text{ and }  
S_4=\begin{bmatrix}
    1 & 1 & 0 \\
    0 & 1 & 1 \\
\end{bmatrix} 
.\]

Since  $\begin{bmatrix} 1 & 1 & 1 \end{bmatrix} R_4R= 3$ and  $\begin{bmatrix} 1 & 1 & 1 \end{bmatrix} R_3R= 3$, 
clearly $T=\id$, $2\times 2$-identity matrix,  gives a pointed isomorphism between $K_0^{\gr}(E^1_7)$ and $K_0^{\gr}(E^1_{17})$. 
\[
\begin{tikzcd}
K_0^{\gr}(L_\K(E^1_7)) \arrow[r, "\phi_{R}\phi_{R_4}" ] \arrow[d, dashed,"\mu" ]
& K_0^{\gr}(L_\K(R_2)) \arrow[d, "T=\id" ] \\
K_0^{\gr}(L_\K(E^1_{17})) \arrow[r, "\phi_{R}\phi_{R_3}"]
& K_0^{\gr}(L_\K(R_2))
\end{tikzcd}
\]

In order to show that $L_\K(E^1_7)$ and $L_\K(E^1_{17})$ are isomorphic, we use the unital in-splitting $I_{+}$  (Definition~\ref{unitalinspliII})  on the graph $F^1_2$ at $w_1$: Consider the partition at $w_1$ of two sets: one set is empty and the other set consists of both incoming edges, which produces the graph $E^1_{17}$. The  other partition is the maximal in-splitting at $w_1$, that, as mentioned, produces $E^1_7$. Now Eilers-Ruiz's unital inplitting guarantees that the algebras associated to these graphs are graded isomorphic.

{\bf 7.} Consider the graphs $E^1_8$ and $E^1_{12}$ below with the adjacency matrices  $A_{E^1_8}= \begin{bmatrix} 1 & 1 & 0 \\ 1 & 0 & 1 \\ 1 & 1 & 1 \\ \end{bmatrix}$ and $
A_{E^1_{12}}= \begin{bmatrix} 1 & 1 & 1 \\ 1 & 0 & 0 \\ 1 & 0 & 1 \\ \end{bmatrix}$, respectively.

\begin{figure}[H]
\centering
\begin{tikzpicture}[scale=3.5]
\fill (0,0)  circle[radius=.6pt];
\fill (-.35,-.61)  circle[radius=.6pt];
\fill (.35,-.61)  circle[radius=.6pt];
\draw (0,-.15) node{$v_1$};
\draw (-.70, 0) node{$E^1_8$:};
\draw (-.35+.13,-.535) node{$v_3$};
\draw (.35-.13,-.535) node{$v_2$};
\draw (0,-.9) node{$g$};
\draw (-.20,-.2) node{$f$};
\draw (-.35-.13,-.9) node{$e$};
\draw[->, shorten >=5pt] (0,0) to (.35,-.61);
\draw[->, shorten >=5pt] (.35,-.61) to [in=0,out=60] (0,0);
\draw[->, shorten >=5pt] (.35,-.61) to (-.35,-.61);
\draw[->, shorten >=5pt] (-.35,-.61) to [in=-120,out=-60] (.35,-.61) ;
\draw[->, shorten >=5pt] (-.35,-.61) to (0,0);
\draw[->, shorten >=5pt] (0,0) to[in=130,out=50, loop] (0,0);
\draw[->, shorten >=5pt] (-.35,-.61) to[in=250,out=170, loop] (-.35,-.61);
\end{tikzpicture} 
\hspace{10pt}
\begin{tikzpicture}[scale=3.5]
\fill (0,0)  circle[radius=.6pt];
\fill (-.35,-.61)  circle[radius=.6pt];
\fill (.35,-.61)  circle[radius=.6pt];
\draw (0,-.15) node{$v_1$};
\draw (-.70, 0) node{$E^1_{12}:$};
\draw (-.35+.13,-.535) node{$v_3$};
\draw (.35-.13,-.535) node{$v_2$};
\draw (.2,.1) node{$f$};
\draw (-.175,-.4) node{$e$};
\draw (.4,-.15) node{$g$};
\draw[->, shorten >=5pt] (0,0) to (.35,-.61);
\draw[->, shorten >=5pt] (.35,-.61) to [in=0,out=60] (0,0);
\draw[->, shorten >=5pt] (-.35,-.61) to (0,0);
\draw[->, shorten >=5pt] (0,0) to [in=120,out=180
] (-.35,-.61) ;
\draw[->, shorten >=5pt] (0,0) to[in=130,out=50, loop] (0,0);
\draw[->, shorten >=5pt] (-.35,-.61) to[in=250,out=170, loop] (-.35,-.61);
\end{tikzpicture}
\end{figure}

Since $\det(E^1_8) = \det(E^1_{12})=-1$ and the graphs are  primitive,  Lemma~\ref{DL} and Corollary~\ref{halfspace} give:
\begin{align*}
K_0^{\gr}(L_\K(E^1_8)) &=  \mathbb{Z} ^3, \\
K_0^{\gr}(L_\K(E^1_8))^+ &=\left\{ {\bf u} \in \mathbb{Z}^3 \, | \, {\bf u} \cdot {\bf z} > 0 \right\}  \cup \, \{0\}, \text{  where } \bf{z} \text{  is approximately } \begin{bmatrix} 0.554958 \\
0.692021 \\
1 \\
\end{bmatrix} ,\\
{}^1(a,b,c)& =(a+b+c, a+c, b+c).
\end{align*}
whereas, 
\begin{align*}
K_0^{\gr}(L_\K(E^1_{12})) &=  \mathbb{Z} ^3, \\
K_0^{\gr}(L_\K(E^1_{12}))^+ &=\left\{ {\bf u} \in \mathbb{Z}^3 \, | \, {\bf u} \cdot {\bf z'} > 0 \right\}  \cup \, \{0\}, \text{  where } \bf{z'} \text{  is approximately } 
\begin{bmatrix} 
1.24698\\
 0.554958\\
 1 \\
\end{bmatrix} ,\\
{}^1(a,b,c)& =(a+b+c, a, a+c).
\end{align*}

The invertible matrix  $T = \begin{bmatrix}
    0 & 0 & 1\\
    -1 & 0 & 1 \\
    2 & 1 & -1 \\
\end{bmatrix}\in \GL_3(\mathbb Z)$ induces an order-preserving isomorphism 
\begin{equation}\label{hardbuteasy}
T \colon  K_0^{\gr}(L_\K(E^1_8)) \longrightarrow K_0^{\gr}(L_\K(E^1_{12})),
\end{equation}
  i.e., $A_{E^1_{12}}T=TA_{E^1_8}$ which is also pointed, meaning,  $\begin{bmatrix} 1 & 1 & 1 \end{bmatrix}T = \begin{bmatrix} 1 & 1 & 1 \end{bmatrix}$. Note that $T$  takes 
  Perron-Frobenius eigenvalue $\bf{z}$ of $E^1_8$ to a positive scalar multiple of the eigenvector $\bf{z}^\prime$ of $E^1_{12}$, thus preserving the positive cones. 

Therefore the matrices $A_{E^1_{8}}$ and $A_{E^1_{12}}$ are shift equivalent. We show that these matrices are indeed strong shift equivalent.  Starting from $E^1_8$, we partition the arrows coming out of $v_3$ into two sets $\{e,f \}$ and $\{g \}$ and perform the out-split. We obtain the graph $A$ below which coincides with the in-split of $E^{12}_1$ by partition the arrows coming into $v_1$ into two sets $\{f \}$ and $\{e, g \}$.
\begin{figure}[H]
\centering
\begin{tikzpicture}[scale=3.5]
\fill (0,0)  circle[radius=.6pt];
\draw (-.60, 0) node{$A$:};

\fill (.5,0)  circle[radius=.6pt];
\fill (0,-.5)  circle[radius=.6pt];
\fill (.5,-.5)  circle[radius=.6pt];
\draw[->, shorten >=5pt] (0,0) to (.5,0);
\draw[->, shorten >=5pt] (0,0) to (0,-.5);
\draw[->, shorten >=5pt] (0,-.5) to (.5,-.5);
\draw[->, shorten >=5pt] (.5,0) to (.5,-.5);
\draw[->, shorten >=5pt] (.5,-.5) to (0,0);
\draw[->, shorten >=5pt] (.5,-.5) to[in=-45, out=-135] (0,-.5);
\draw[->, shorten >=5pt] (.5,-.5) to[in=-45, out=45] (.5,0);
\draw[->, shorten >=5pt] (0,0) to[in=175,out=95, loop] (0,0);
\draw[->, shorten >=5pt] (0,-.5) to[in=265,out=185, loop] (0,-.5);
\end{tikzpicture}
\end{figure}

In fact we can say more: 

- $E^1_{8}$ is elementary shift equivalent to $E^1_{12}$ as $E^1_{8} = R_iS_i$ and $E^1_{12}=S_iR_i$, for $i=1, 2$, where,
$$R_1= \begin{bmatrix}
    0 & 0 & 1 \\
    1 & 0 & 0 \\
    1 & 1 & 0 \\
    \end{bmatrix} \text{ and } S_1= \begin{bmatrix}
    1 & 0 & 1 \\
    0 & 1 & 0 \\
    1 & 1 & 0 \\
\end{bmatrix}$$
$$R_2= \begin{bmatrix}
    1 & 0 & 0 \\
    0 & 1 & 1 \\
    1 & 0 & 1 \\
    \end{bmatrix} \text{ and } S_2= \begin{bmatrix}
    1 & 1 & 0 \\
    1 & 0 & 0 \\
    0 & 0 & 0 \\
\end{bmatrix}.$$

The following path of moves from $E^1_8$ to $E^1_{12}$ was found by S\o ren Eilers  using a computer search. Since both out-split and generalised in-split preserve Leavitt path algebras up to graded isomorphism, it follows that $L_\K(E^1_8) \cong_{\gr} L_\K(E^1_{12})$.

\begin{figure}[H]
\centering
    \begin{tikzpicture}
       \draw (0,0) node(E){\usebox{\E}};
       \draw (12,0) node(F){\usebox{\F}};
       \draw (0, -3) node(Cone){\usebox{\Cone}};
       \draw (3, -3) node(Ctwo){\usebox{\Ctwo}};
       \draw (5.6, -3) node(Cthree){\usebox{\Cthree}};
       \draw (9, -3) node(Cfour){\usebox{\Cfour}};
       \draw (12, -3) node(Cfive){\usebox{\Cfive}};
       \draw (1.25, -6) node(Done){\usebox{\Done}};       
       \draw (10.25, -6) node(Dtwo){\usebox{\Dtwo}};
       
\draw [-{Stealth[length=3mm]}] (Cone) -- (E);
\draw [-{Stealth[length=3mm]}] (5,-3)--(4,-3);
\draw [-{Stealth[length=3mm]}] (7,-3)--(8,-3);
\draw [-{Stealth[length=3mm]}] (Cfive) -- (F);
\draw [-{Stealth[length=3mm]}] (Cfive) -- (F);
\draw[-{Diamond[open]}, thick] (1,-5)--(0,-4);
\draw[-{Diamond[open]}, thick] (2,-5)--(3,-4);
\draw[-{Diamond[open]}, thick] (10,-5)--(9,-4);
\draw[-{Diamond[open]}, thick] (11,-5)--(12,-4);

\draw (0,-8) node(key){\usebox{\key}};
    \end{tikzpicture}
  
\end{figure}

We conclude this case by observing that $L_\K(E^1_{12}) \cong \M_2(L_\K(E^1_{12}))(0,1)$.  We know that $L_\K(E^1_8) \cong_{\gr} L_\K(A)$, as $A$ is the out-split of $E^1_8$.  On the other hand, by Example~\ref{gettingclose343}, $L_\K(A) \cong_{\gr} L_\K(F^4_1)$. One more maximum  out-split on $F^4_1$ gives us $F^3_1$ and thus  $L_\K(F^4_1) \cong_{\gr} L_\K(F^3_1)$. Since $L_\K(F^3_{1}) \cong \M_2(L_\K(E^1_{12}))(0,1)$, we obtain 
$$L_\K(E^1_{12})\cong_{\gr} L_\K(E^1_8) \cong \M_2(L_\K(E^1_{12}))(0,1).$$ 

\begin{figure}[H]
\begin{tikzpicture}[scale=3.5]
\fill (0,0)  circle[radius=.6pt];
\fill (-.35,-.61)  circle[radius=.6pt];
\fill (.35,-.61)  circle[radius=.6pt];
\fill (0,-.61)  circle[radius=.6pt];
\draw (-.70, 0) node{$F^4_{1}:$};
\draw[->, shorten >=5pt] (0,0) to (.35,-.61);
\draw[->, shorten >=5pt] (.35,-.61) to [in=0,out=60] (0,0);
\draw[->, shorten >=5pt] (-.35,-.61) to (0,0);
\draw[->, shorten >=5pt] (0,0) to [in=120,out=180
] (-.35,-.61) ;
\draw[->, shorten >=5pt] (0,0) to[in=130,out=50, loop] (0,0);
\draw[->, shorten >=5pt] (-.35,-.61) to[in=250,out=170, loop] (-.35,-.61);
\draw[->, shorten >=5pt] (0,-.61) to (-.35,-.61);
\draw[->, shorten >=5pt] (0,-.61) to (.35,-.61);
\draw[->, shorten >=5pt] (0,-.61) to (0,0);
\end{tikzpicture}
\hspace{10pt}
\begin{tikzpicture}[scale=3.5]
\fill (0,0)  circle[radius=.6pt];
\fill (-.35,-.61)  circle[radius=.6pt];
\fill (.35,-.61)  circle[radius=.6pt];
\fill (.10,-.61)  circle[radius=.6pt];
\fill (-.10,-.61)  circle[radius=.6pt];
\fill (0,-.34)  circle[radius=.6pt];
\draw (-.70, 0) node{$F^3_{1}:$};
\draw[->, shorten >=5pt] (0,0) to (.35,-.61);
\draw[->, shorten >=5pt] (.35,-.61) to [in=0,out=60] (0,0);
\draw[->, shorten >=5pt] (-.35,-.61) to (0,0);
\draw[->, shorten >=5pt] (0,0) to [in=120,out=180
] (-.35,-.61) ;
\draw[->, shorten >=5pt] (0,0) to[in=130,out=50, loop] (0,0);
\draw[->, shorten >=5pt] (-.35,-.61) to[in=250,out=170, loop] (-.35,-.61);
\draw[->, shorten >=5pt] (-.10,-.61) to (-.35,-.61);
\draw[->, shorten >=5pt] (.10,-.61) to (.35,-.61);
\draw[->, shorten >=5pt] (0,-.34) to (0,0);
\end{tikzpicture}
\end{figure}

{\bf 8.} Consider the graphs $E^1_9$ below with the adjacency matrix $A_{E^1_9} = \begin{bmatrix} 1 & 1 & 0 \\ 1 & 1 & 1 \\ 1 & 1 & 1 \\ \end{bmatrix}.$

\begin{figure}[H]
\centering
\begin{tikzpicture}[scale=3.5]
\fill (0,0)  circle[radius=.6pt];
\fill (-.35,-.61)  circle[radius=.6pt];
\fill (.35,-.61)  circle[radius=.6pt];
\draw (0,-.15) node{$v_1$};
\draw (-.70, 0) node{$E^1_9:$};
\draw (-.35+.13,-.535) node{$v_3$};
\draw (.35-.13,-.535) node{$v_2$};
\draw[->, shorten >=5pt] (0,0) to (.35,-.61);
\draw[->, shorten >=5pt] (.35,-.61) to [in=0,out=60] (0,0);
\draw[->, shorten >=5pt] (.35,-.61) to (-.35,-.61);
\draw[->, shorten >=5pt] (-.35,-.61) to [in=-120,out=-60] (.35,-.61) ;
\draw[->, shorten >=5pt] (-.35,-.61) to (0,0);
\draw[->, shorten >=5pt] (0,0) to[in=130,out=50, loop] (0,0);
\draw[->, shorten >=5pt] (-.35,-.61) to[in=250,out=170, loop] (-.35,-.61);
\draw[->, shorten >=5pt] (.35,-.61) to[in=10,out=-70, loop] (-.35,-.61);
\end{tikzpicture}
\end{figure}

We have $\det(E^1_9)=0$. However the graph $E^1_9$  can be obtained by out-splitting the graph $F^1_3$ below at $w_2$, with the partition $\{ f\}$ and $\{e, g \}$.

\begin{figure}[H]
\centering
\begin{tikzpicture}[scale=3.5]
\fill (0,0)  circle[radius=.6pt];
\draw (0,-.2) node{$w_1$};
\draw (-0.60 ,0.2) node{$F^1_3:$};
\fill (.5,0)  circle[radius=.6pt];
\draw (.45,-.2) node{$w_2$};
\draw[->, shorten >=5pt] (0,0) to[in=135, out=-135, loop] (0,0);
\draw[->, shorten >=5pt] (.5,0) to[in=-90,out=0, loop] (.5,0);
\draw[->, shorten >=5pt] (.5,0) to[in=90,out=0, loop] (.5,0);
\draw[->, shorten >=5pt] (0,0) to[in=120,out=60] (.5,0);
\draw[->, shorten >=5pt] (.5,0) to[in=-60,out=-120] (0,0);
\draw (.25,-.2) node{$e$};
\draw (.8,.15) node{$f$};
\draw (.8,-.15) node{$g$};
\end{tikzpicture}
\end{figure}

Since $\det(F^1_3) =1$  and $F^1_3$ is primitive,  Lemma~\ref{DL} and Corollary~\ref{halfspace} give:
\begin{align*}
K_0^{\gr}(L_\K(F^1_3)) & =  \mathbb{Z} ^2,\\
K_0^{\gr}(L_\K(F^1_3))^+ & =\left\{ {\bf u} \in \mathbb{Z}^2 \, | \, {\bf u} \cdot {\bf z} > 0 \right\}  \cup \, \{0\}, \text{ where }\bf{z} \text{ is } \begin{bmatrix}
 \frac{1}{2} \left(\sqrt{5}-1\right) \\
1 \\
\end{bmatrix} ,\\
{}^1(a,b)& =(a+2b, a+b).
\end{align*}

As $E^1_9$ is an out-split of $F^1_3$, its graded $K$-theory coincides with that of $F^1_3$.

{\bf 9.} Consider the graph $E^1_{10}$ below with the adjacency matrix  $A_{E^1_{10}} = \begin{bmatrix} 0 & 1 & 1 \\ 1 & 0 & 0 \\ 1 & 0 & 0 \\ \end{bmatrix}$. 

\begin{figure}[H]
\centering
\begin{tikzpicture}[scale=3.5]
\fill (0,0)  circle[radius=.6pt];
\fill (-.35,-.61)  circle[radius=.6pt];
\fill (.35,-.61)  circle[radius=.6pt];
\draw (0,-.15) node{$v_1$};
\draw (-.70, 0) node{$E^1_{10}:$};
\draw (-.35+.13,-.535) node{$v_3$};
\draw (.35-.13,-.535) node{$v_2$};
\draw[->, shorten >=5pt] (0,0) to (.35,-.61);
\draw[->, shorten >=5pt] (.35,-.61) to [in=0,out=60] (0,0);
\draw[->, shorten >=5pt] (-.35,-.61) to (0,0);
\draw[->, shorten >=5pt] (0,0) to [in=120,out=180
] (-.35,-.61) ;
\end{tikzpicture}
\end{figure}

We have $\det(E^1_{10})=0$. However, the graph $E^1_{10}$  can be obtained by the (maximal) out-splitting of the graph $F^1_4$ below. 
\begin{figure}[H]
\centering
\begin{tikzpicture}[scale=3.5]
\draw (-0.50 ,0) node{$F^1_4:$};

\fill (0,0)  circle[radius=.6pt];
\draw (0,-.2) node{$w_1$};
\fill (.5,0)  circle[radius=.6pt];
\draw (.5,-.2) node{$w_2$};
\draw[->, shorten >=5pt] (0,0) to[in=120,out=65] (.5,0);
\draw[->] (0,.05) to[in=90,out=90] (.5,.05);
\draw (0,.05) to (0,0);
\draw[->, shorten >=5pt] (.5,0) to[in=-60,out=-120] (0,0);
\end{tikzpicture}
\end{figure}

As $E^1_{10}$ is an out-split of $F^1_4$, its graded $K$-theory coincides with that of $F^1_4$. 

We have $A_{F^1_4}= \begin{bmatrix} 0 & 1 \\ 1/2 & 0 \\    
    \end{bmatrix}$. Since $A:=A_{F^1_4}^2=\begin{bmatrix} 2 & 0 \\ 0 & 2 \\    
    \end{bmatrix}$
    the dimension group can be calculated as the colimit of the following inductive system (see~(\ref{thuluncht}))
\begin{equation*}
 \xymatrix{
\mathbb{Z}^2 \ar[r]^A \ar[dr]^{\id}& \mathbb{Z}^2 \ar[r]^A \ar[d]^{A^{-1}}&\mathbb{Z}^2 \ar[r]^A \ar[dl]^{A^{-1}}& \cdots\\
&    \mathbb{Z} [ \frac{1}{2}]  \oplus  \mathbb{Z} [ \frac{1}{2}] & 
     }
 \end{equation*}

Thus we obtain 
\begin{align*}
K_0^{\gr}(L(F^1_4)) & =  \left( \mathbb{Z} \left[ \frac{1}{2} \right] \right)^2, \\
K_0^{\gr}(L(F^1_4))^+ & =  \left( \mathbb{N} \left[ \frac{1}{2} \right] \right)^2,\\
{}^1(a,b) & =(b, 2a). 
\end{align*}

{\bf 10.} Consider the graphs $E^1_{11}$ below with the adjacency matrix $E^1_{11}= \begin{bmatrix} 0 & 1 & 1 \\ 1 & 0 & 0 \\ 1 & 0 & 1 \\ \end{bmatrix}.$

\begin{figure}[H]
\centering
\begin{tikzpicture}[scale=3.5]
\fill (0,0)  circle[radius=.6pt];
\fill (-.35,-.61)  circle[radius=.6pt];
\fill (.35,-.61)  circle[radius=.6pt];
\draw (0,-.15) node{$v_1$};
\draw (-.70, 0) node{$E^1_{11}$:};
\draw (-.35+.13,-.535) node{$v_3$};
\draw (.35-.13,-.535) node{$v_2$};
\draw[->, shorten >=5pt] (0,0) to (.35,-.61);
\draw[->, shorten >=5pt] (.35,-.61) to [in=0,out=60] (0,0);
\draw[->, shorten >=5pt] (-.35,-.61) to (0,0);
\draw[->, shorten >=5pt] (0,0) to [in=120,out=180
] (-.35,-.61) ;
\draw[->, shorten >=5pt] (-.35,-.61) to[in=250,out=170, loop] (-.35,-.61);
\end{tikzpicture}
\end{figure}

Since $\det(E^1_{11})=-1$ and the graph is  primitive,  Lemma~\ref{DL} and Corollary~\ref{halfspace} give:
\begin{align*}
K_0^{\gr}(L_\K(E^1_{11})) & =   \mathbb{Z}^3, \\
K_0^{\gr}(L_\K(E^1_11))^+ & =\left\{ {\bf u} \in \mathbb{Z}^3 \, | \, {\bf u} \cdot {\bf z} > 0 \right\}  \cup \, \{0\}, \text{ where } \bf{z} \text{  is approximately } \left[
\begin{array}{c}
 0.801938\\
  0.445042\\
  1 \\
\end{array}
\right],\\
{}^1(a,b,c)& =(b+c,a, a+c).
\end{align*}

{\bf 11.} Consider the graphs $E^1_{18}$ below with the adjacency matrix $E^1_{18}= \begin{bmatrix} 1 & 1 & 0 \\ 0 & 1 & 1 \\ 1 & 1 & 1 \\ \end{bmatrix}.$

\begin{figure}[H]
\centering
\begin{tikzpicture}[scale=3.5]
\fill (0,0)  circle[radius=.6pt];
\fill (-.35,-.61)  circle[radius=.6pt];
\fill (.35,-.61)  circle[radius=.6pt];
\draw (0,-.15) node{$v_1$};
\draw (-.70, 0) node{$E^1_{18}$};
\draw (-.35+.13,-.535) node{$v_3$};
\draw (.35-.13,-.535) node{$v_2$};
\draw[->, shorten >=5pt] (0,0) to (.35,-.61);
\draw[->, shorten >=5pt] (.35,-.61) to (-.35,-.61);
\draw[->, shorten >=5pt] (-.35,-.61) to [in=-120,out=-60] (.35,-.61) ;
\draw[->, shorten >=5pt] (-.35,-.61) to (0,0);
\draw[->, shorten >=5pt] (0,0) to[in=130,out=50, loop] (0,0);
\draw[->, shorten >=5pt] (-.35,-.61) to[in=250,out=170, loop] (-.35,-.61);
\draw[->, shorten >=5pt] (.35,-.61) to[in=10,out=-70, loop] (-.35,-.61);
\end{tikzpicture}
\end{figure}

Since $\det(E^1_{18})=1$ and the graph is  primitive,  Lemma~\ref{DL} and Corollary~\ref{halfspace} give:
\begin{align*}
K_0^{\gr}(L_\K(E^1_{18})) & =   \mathbb{Z}^3, \\
K_0^{\gr}(L_\K(E^1_{18}))^+ & =\left\{ {\bf u} \in \mathbb{Z}^3 \, | \, {\bf u} \cdot {\bf z} > 0 \right\}  \cup \, \{0\}, \text{ where } \bf{z} \text{ is } \left[
\begin{array}{c}
 \frac{1}{3} \left(1-5 \sqrt[3]{\frac{2}{11+3
   \sqrt{69}}}+\sqrt[3]{\frac{1}{2} \left(11+3
   \sqrt{69}\right)}\right) \\ 
   \frac{1}{3}
   \left(-1+\sqrt[3]{\frac{1}{2} \left(25-3
   \sqrt{69}\right)}
   +\sqrt[3]{\frac{1}{2} \left(25+3
   \sqrt{69}\right)}\right) \\
   1 \\
\end{array}
\right],\\
{}^1(a,b,c) & =(a+c,a+b+c, b+c).
\end{align*}

\vspace{1truecm}

{\bf 12.} Consider the graphs $E^2_{1}$ below with the adjacency matrix $A_{E^2_1}= \begin{bmatrix} 0 & 1 & 0 \\ 0 & 1 & 1 \\ 1 & 1 & 1 \\ \end{bmatrix}$.

\begin{figure}[H]
\centering
\begin{tikzpicture}[scale=3.5]
\fill (0,0)  circle[radius=.6pt];
\fill (-.35,-.61)  circle[radius=.6pt];
\fill (.35,-.61)  circle[radius=.6pt];
\draw (0,-.15) node{$v_1$};
\draw (-.70, 0) node{$E^2_1$:};
\draw (-.35+.13,-.535) node{$v_3$};
\draw (.35-.13,-.535) node{$v_2$};
\draw[->, shorten >=5pt] (0,0) to (.35,-.61);
\draw[->, shorten >=5pt] (.35,-.61) to (-.35,-.61);
\draw[->, shorten >=5pt] (-.35,-.61) to [in=-120,out=-60] (.35,-.61) ;
\draw[->, shorten >=5pt] (-.35,-.61) to (0,0);
\draw[->, shorten >=5pt] (-.35,-.61) to[in=250,out=170, loop] (-.35,-.61);
\draw[->, shorten >=5pt] (.35,-.61) to[in=10,out=-70, loop] (-.35,-.61);
\end{tikzpicture}
\end{figure}

Since $\det(E^2_{1})=1$ and the graph is  primitive,  Lemma~\ref{DL} and Corollary~\ref{halfspace} give:
\begin{align*}
K_0^{\gr}(L_\K(E^2_1)) & =   \mathbb{Z}^3, \\
K_0^{\gr}(L_\K(E^2_1))^+ & =\left\{ {\bf u} \in \mathbb{Z}^3 \, | \, {\bf u} \cdot {\bf z} > 0 \right\}  \cup \, \{0\}, \text{ where } \bf{z}\text{ is } \left[
\begin{array}{c}
 \frac{1}{6} \left(-1-\frac{11}{\sqrt[3]{71+6
   \sqrt{177}}}+\sqrt[3]{71+6 \sqrt{177}}\right) \\ \frac{1}{6}
   \left(-1+\sqrt[3]{44-3 \sqrt{177}}+\sqrt[3]{44+3
   \sqrt{177}}\right) \\ 1 \\
\end{array}
\right],\\
{}^1(a,b,c) & =(c,a+b+c, b+c).
\end{align*}

{\bf 13.} Consider the graphs $E^2_{2}$ below with the adjacency matrix $A_{E^2_2}= \begin{bmatrix} 0 & 1 & 0 \\ 1 & 0 & 1 \\ 1 & 1 & 0 \\ \end{bmatrix}$.

\begin{figure}[H]
\centering
\begin{tikzpicture}[scale=3.5]
\fill (0,0)  circle[radius=.6pt];
\fill (-.35,-.61)  circle[radius=.6pt];
\fill (.35,-.61)  circle[radius=.6pt];
\draw (0,-.15) node{$v_1$};
\draw (-.70, 0) node{$E^2_2$};
\draw (-.35+.13,-.535) node{$v_3$};
\draw (.35-.13,-.535) node{$v_2$};
\draw[->, shorten >=5pt] (0,0) to (.35,-.61);
\draw[->, shorten >=5pt] (.35,-.61) to [in=0,out=60] (0,0);
\draw[->, shorten >=5pt] (.35,-.61) to (-.35,-.61);
\draw[->, shorten >=5pt] (-.35,-.61) to [in=-120,out=-60] (.35,-.61) ;
\draw[->, shorten >=5pt] (-.35,-.61) to (0,0);
\end{tikzpicture}
\end{figure}

Since $\det(E^2_{2})=1$ and the graph is  primitive,  Lemma~\ref{DL} and Corollary~\ref{halfspace} give:
\begin{align*}
K_0^{\gr}(L_\K(E^2_2))&=   \mathbb{Z}^3, \\
K_0^{\gr}(L_\K(E^2_2))^+&=\left\{ {\bf u} \in \mathbb{Z}^3 \, | \, {\bf u} \cdot {\bf z} > 0 \right\}  \cup \, \{0\}, \text{ where } \bf{z} \text{ is } \left[
\begin{array}{c}
 \frac{1}{2} \left(\sqrt{5}-1\right) \\ 1 \\ 1 \\
\end{array}
\right],\\
{}^1(a,b,c)&=(b+c,a+c, b).
\end{align*}

{\bf 14.} Consider the graphs $E^2_{3}, E^2_{4}, E^3_{4}$ below with the adjacency matrices 
$A_{E^2_3}= \begin{bmatrix} 0 & 1 & 0 \\ 1 & 0 & 1 \\ 1 & 1 & 1 \\ \end{bmatrix},   A_{E^2_4}= \begin{bmatrix} 1 & 1 & 0 \\ 1 & 0 & 1 \\ 1 & 1 & 0 \\ \end{bmatrix}$ and 
$A_{E^3_4} = \begin{bmatrix} 1 & 1 & 1 \\ 1 & 0 & 0 \\ 1 & 0 & 0 \\ \end{bmatrix}$, respectively.

\begin{figure}[H]
\centering
\begin{tikzpicture}[scale=3.5]
\fill (0,0)  circle[radius=.6pt];
\fill (-.35,-.61)  circle[radius=.6pt];
\fill (.35,-.61)  circle[radius=.6pt];
\draw (0,-.15) node{$v_1$};
\draw (-.70, 0) node{$E^2_3:$};
\draw (-.35+.13,-.535) node{$v_3$};
\draw (.35-.13,-.535) node{$v_2$};
\draw[->, shorten >=5pt] (0,0) to (.35,-.61);
\draw[->, shorten >=5pt] (.35,-.61) to [in=0,out=60] (0,0);
\draw[->, shorten >=5pt] (.35,-.61) to (-.35,-.61);
\draw[->, shorten >=5pt] (-.35,-.61) to [in=-120,out=-60] (.35,-.61) ;
\draw[->, shorten >=5pt] (-.35,-.61) to (0,0);
\draw[draw=white, ->, shorten >=5pt] (0,0) to[in=130,out=50, loop] (0,0);
\draw[->, shorten >=5pt] (-.35,-.61) to[in=250,out=170, loop] (-.35,-.61);
\end{tikzpicture}
\hspace{10pt}
\begin{tikzpicture}[scale=3.5]
\fill (0,0)  circle[radius=.6pt];
\fill (-.35,-.61)  circle[radius=.6pt];
\fill (.35,-.61)  circle[radius=.6pt];
\draw (0,-.15) node{$v_1$};
\draw (-.70, 0) node{$E^2_4:$};
\draw (-.35+.13,-.535) node{$v_3$};
\draw (.35-.13,-.535) node{$v_2$};
\draw[->, shorten >=5pt] (0,0) to (.35,-.61);
\draw[->, shorten >=5pt] (.35,-.61) to [in=0,out=60] (0,0);
\draw[->, shorten >=5pt] (.35,-.61) to (-.35,-.61);
\draw[->, shorten >=5pt] (-.35,-.61) to [in=-120,out=-60] (.35,-.61) ;
\draw[->, shorten >=5pt] (-.35,-.61) to (0,0);
\draw[->, shorten >=5pt] (0,0) to[in=130,out=50, loop] (0,0);
\draw[draw=white, ->, shorten >=5pt] (-.35,-.61) to[in=250,out=170, loop] (-.35,-.61);
\end{tikzpicture}
\\
\begin{tikzpicture}[scale=3.5]
\fill (0,0)  circle[radius=.6pt];
\fill (-.35,-.61)  circle[radius=.6pt];
\fill (.35,-.61)  circle[radius=.6pt];
\draw (0,-.15) node{$v_1$};
\draw (-.70, 0) node{$E^3_4:$};
\draw (-.35+.13,-.535) node{$v_3$};
\draw (.35-.13,-.535) node{$v_2$};
\draw[->, shorten >=5pt] (0,0) to (.35,-.61);
\draw[->, shorten >=5pt] (.35,-.61) to [in=0,out=60] (0,0);
\draw[->, shorten >=5pt] (-.35,-.61) to (0,0);
\draw[->, shorten >=5pt] (0,0) to [in=120,out=180
] (-.35,-.61) ;
\draw[->, shorten >=5pt] (0,0) to[in=130,out=50, loop] (0,0);
\draw[draw=white, ->, shorten >=5pt] (-.35,-.61) to[in=250,out=170, loop] (-.35,-.61);
\end{tikzpicture}
\end{figure}

These graphs are the result of an out-split or in-split of the graphs $F^2_1$ and $F^2_2$ below:  

\begin{figure}[H]
\centering
\begin{tikzpicture}[scale=3.5]
\draw (-.7,0) node{$F^2_1:$};
\fill (0,0)  circle[radius=.6pt];
\draw (0,-.2) node{$w_1$};
\fill (.5,0)  circle[radius=.6pt];
\draw (.5,-.2) node{$w_2$};
\draw[->, shorten >=5pt] (0,0) to[in=135, out=-135, loop] (0,0);
\draw (-.35,0) node{$e$};
\draw (.25,.225) node{$f$};
\draw (.25,0) node{$g$};
\draw[->, shorten >=5pt] (0,0) to[in=110,out=80] (.5,0);
\draw[->, shorten >=5pt] (0,0) to[in=150,out=30](.5,0);
\draw[->, shorten >=5pt] (.5,0) to[in=-60,out=-120] (0,0);
\end{tikzpicture}
\,\,\,\,\,
\begin{tikzpicture}[scale=3.5]
\draw (-.7,0) node{$F^2_2:$};
\fill (0,0)  circle[radius=.6pt];
\draw (0,-.2) node{$w_1$};
\fill (.5,0)  circle[radius=.6pt];
\draw (.5,-.2) node{$w_2$};
\draw[->, shorten >=5pt] (0,0) to[in=135, out=-135, loop] (0,0);
\draw[->, shorten >=5pt] (.5,0) to[out=100,in=70] (0,0);
\draw[->, shorten >=5pt] (.5,0) to[out=150,in=30](0,0);
\draw[->, shorten >=5pt] (0,0) to[in=-120,out=-60] (.5,0);
\draw (-.35,0) node{$e$};
\draw (.25,.225) node{$f$};
\draw (.25,0) node{$g$};
\end{tikzpicture}
\end{figure}

\phantom{a} 
\begin{itemize}
    \item[-]$E^2_3$  is obtained from $F^2_1$ by an out-split at $w_1$ with partitions $\{e,f \}$ and $\{ g \}$.
    \item[-]$E^2_4$ is obtained from $F^2_2$ by an in-split at $w_1$ with partitions $\{e,f \}$ and $\{ g \}$.
    \item[-]$E^3_{4}$ is obtained from $F^2_1$ by a maximal in-split at $w_2$ (or a maximal out-split on $F^2_2$ at $w_2$). 
\end{itemize}

First note that $F^2_1$ and $F^2_2$ are elementary shift equivalent and thus the graded $K$-theory of the Leavitt path algebras of their associated graphs coincide:

- $F^2_{1}$ is elementary shift equivalent to $F^2_{2}$ as $F^2_{1} = RS$ and $F^2_{2}=SR$, where, 
$$R= \begin{bmatrix}
    1 & 1  \\
    1 & 0  \\
   \end{bmatrix} \text{  and } 
   S= \begin{bmatrix}
    1 & 0  \\
    0 & 2  \\
 \end{bmatrix}.$$

We have $\det(F^2_{1})=-2$. In order to calculate $K^{\gr}_0(L(F^2_1))$, we will use the alternative description of dimension group from Corollary~\ref{alternativeformdel}. 

\begin{prop}
  We have   $\Delta_{A_{F^2_1}}= \mathbb{Z}\left[ \frac{1}{2} \right] \begin{bmatrix} 1 & 1  \end{bmatrix} \oplus 0 \times \mathbb{Z}$.
\end{prop}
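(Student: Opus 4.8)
The plan is to imitate the proof of Proposition~\ref{profidsh6}, using the description $\Delta_{A_{F^2_1}}=\bigcup_{k\ge 0}\mathbb Z^2 A_{F^2_1}^{-k}$ furnished by Corollary~\ref{alternativeformdel} (valid since $\det A_{F^2_1}=-2\ne 0$). The only inputs needed are the inverse $A_{F^2_1}^{-1}=\begin{bmatrix}0&1\\ 1/2&-1/2\end{bmatrix}$ together with the observation that $\begin{bmatrix}1&1\end{bmatrix}$ is a left Perron eigenvector: $\begin{bmatrix}1&1\end{bmatrix}A_{F^2_1}=2\begin{bmatrix}1&1\end{bmatrix}$, equivalently $\begin{bmatrix}1&1\end{bmatrix}A_{F^2_1}^{-1}=\tfrac12\begin{bmatrix}1&1\end{bmatrix}$.

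For the inclusion $\mathbb Z[\tfrac12]\begin{bmatrix}1&1\end{bmatrix}\oplus 0\times\mathbb Z\subseteq\Delta_{A_{F^2_1}}$, take $v=\tfrac{a}{2^k}\begin{bmatrix}1&1\end{bmatrix}+\begin{bmatrix}0&b\end{bmatrix}$ with $a,b\in\mathbb Z$, $k\in\mathbb N$. Multiplying by $A_{F^2_1}^k$ and using the eigenvector relation $\begin{bmatrix}1&1\end{bmatrix}A_{F^2_1}^k=2^k\begin{bmatrix}1&1\end{bmatrix}$ clears the denominator of the first summand, while the second stays integral because $A_{F^2_1}$ is an integer matrix; hence $vA_{F^2_1}^k\in\mathbb Z^2$, so $v\in\Delta_{A_{F^2_1}}$. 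Together with the trivial $\mathbb Z^2\subseteq\Delta_{A_{F^2_1}}$ this gives one direction.

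For the reverse inclusion, let $v\in\Delta_{A_{F^2_1}}$, so $vA_{F^2_1}^{k}\in\mathbb Z^2$ for some $k\ge 0$, and induct on $k$ the claim that then $v\in\tfrac1{2^k}\mathbb Z\begin{bmatrix}1&1\end{bmatrix}\oplus 0\times\mathbb Z$. The base $k=0$ is just $\mathbb Z^2\subseteq\mathbb Z\begin{bmatrix}1&1\end{bmatrix}\oplus 0\times\mathbb Z$. For the inductive step, if $vA_{F^2_1}^{k+1}\in\mathbb Z^2$ then by hypothesis $vA_{F^2_1}=\tfrac{a}{2^k}\begin{bmatrix}1&1\end{bmatrix}+\begin{bmatrix}0&b\end{bmatrix}$ with $a,b\in\mathbb Z$; applying $A_{F^2_1}^{-1}$ and using $\begin{bmatrix}0&b\end{bmatrix}A_{F^2_1}^{-1}=\tfrac12\begin{bmatrix}b&-b\end{bmatrix}=\tfrac b2\begin{bmatrix}1&1\end{bmatrix}+\begin{bmatrix}0&-b\end{bmatrix}$ yields $v=\tfrac{a+2^kb}{2^{k+1}}\begin{bmatrix}1&1\end{bmatrix}+\begin{bmatrix}0&-b\end{bmatrix}\in\tfrac1{2^{k+1}}\mathbb Z\begin{bmatrix}1&1\end{bmatrix}\oplus 0\times\mathbb Z$, completing the induction; taking the union over $k$ gives equality.

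There is no real obstacle here: the argument is entirely parallel to the three-vertex case $E^1_3$, the only things to be careful about being the precise form of $A_{F^2_1}^{-1}$ and the $2$-adic bookkeeping in the inductive step. As in the $E^1_3$ case, one then records the induced $\mathbb Z[x,x^{-1}]$-action ${}^1(a,b)=(a,b)A_{F^2_1}$ on the quotient description and, since $F^2_1$ is primitive with $\det A_{F^2_1}\ne 0$, reads off the positive cone from Corollary~\ref{halfspace}.
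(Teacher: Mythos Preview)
Your proof is correct and follows essentially the same approach as the paper's own proof: both directions use the left Perron eigenvector relation $\begin{bmatrix}1&1\end{bmatrix}A_{F^2_1}=2\begin{bmatrix}1&1\end{bmatrix}$ together with Corollary~\ref{alternativeformdel}, and the reverse inclusion is handled by the identical induction on $k$ with the same decomposition $\begin{bmatrix}0&b\end{bmatrix}A_{F^2_1}^{-1}=\tfrac{b}{2}\begin{bmatrix}1&1\end{bmatrix}+\begin{bmatrix}0&-b\end{bmatrix}$. The only cosmetic difference is that the paper writes out the $k=1$ case separately before the general inductive step, whereas you absorb it directly into the induction.
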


\begin{proof}
First note that $A_{F^2_1}^{-1}=\frac{1}{2}\begin{bmatrix}
 0 & 2 \\
 1 & -1 \\
\end{bmatrix}.$
Consider 
$$v=\frac{a}{2^k}\begin{bmatrix} 1 & 1 \end{bmatrix} + \begin{bmatrix} 0 & b \end{bmatrix} \in \mathbb{Z}\left[ \frac{1}{2} \right] \begin{bmatrix} 1 & 1 \end{bmatrix} \oplus 0 \times \mathbb{Z},$$ where $a,b \in \mathbb{Z}$ and $k\in \mathbb N$.  Since  
$$\begin{bmatrix} 1 & 1  \end{bmatrix} \begin{bmatrix}
 0 & 2 \\
 1 & -1 \\
\end{bmatrix} = \begin{bmatrix} 1 & 1  \end{bmatrix},$$ we have  
\begin{align*} 
&\frac{a}{2^k} \begin{bmatrix} 1 & 1  \end{bmatrix} A_{F^2_1}^k  
= \frac{a}{2^k} \begin{bmatrix} 1 & 1  \end{bmatrix} \begin{bmatrix}
 0 & 2 \\
 1 & -1 \\
\end{bmatrix}^k A_{F^2_1}^k \\ 
= & a \begin{bmatrix} 1 & 1  \end{bmatrix} \left( \frac{1}{2} \begin{bmatrix}
 0 & 2 \\
 1 & -1 \\
\end{bmatrix} \right)^k A_{F^2_1}^k 
= a \begin{bmatrix} 1 & 1 \end{bmatrix} A_{F^2_1}^{-k} A_{F^2_1}^k \\ 
=& a \begin{bmatrix} 1 & 1 \end{bmatrix}. 
\end{align*}
Thus $\frac{a}{2^k}\begin{bmatrix} 1 & 1 \end{bmatrix}$ is an element of $\Delta_{A_{F^2_1}}$. The vector $ \begin{bmatrix} 0 & b \end{bmatrix}$ is also an element of $\Delta_{A_{F^2_1}}$ as $\mathbb{Z}^2 \subseteq \Delta_{A_{F^2_1}}$.
Hence $$\mathbb{Z} \left[ \frac{1}{2} \right]\begin{bmatrix} 1 & 1 \end{bmatrix} \oplus 0 \times \mathbb{Z} \subseteq \Delta_{A_{F^2_1}}.$$

For the reverse inclusion, let $v \in \Delta_{A_{F^2_1}}$. Then by Corollary~\ref{alternativeformdel}, $v \in \mathbb{Z}^3A_{F^2_1}^{-k}$, for some $k \geq 0$. We will prove by induction that $v \in \mathbb{Z} \begin{bmatrix} 1 & 1  \end{bmatrix} \oplus 0 \times \mathbb{Z}$.

When $k=0$, $v \in \mathbb{Z}^2 \subseteq \mathbb{Z} \begin{bmatrix} 1 & 1  \end{bmatrix} \oplus 0 \times \mathbb{Z} \subseteq \mathbb{Z} \left[ \frac{1}{2} \right]\begin{bmatrix} 1 & 1 \end{bmatrix} \oplus 0 \times \mathbb{Z}$.

When $k=1$,  we can write $vA_{F^2_1}=a\begin{bmatrix} 1 & 1  \end{bmatrix} + \begin{bmatrix} 0 & b \end{bmatrix}$, where $a,b \in \mathbb{Z}$. Thus 
\begin{align*}
v=&\frac{a}{2}\begin{bmatrix} 1 & 1 \end{bmatrix} + \frac{1}{2} \begin{bmatrix} b & -b \end{bmatrix}\\ =& \frac{a}{2}\begin{bmatrix} 1 & 1 \end{bmatrix} + \frac{1}{2} \begin{bmatrix} b & b \end{bmatrix} + \frac{1}{2}  \begin{bmatrix} 0 & -2b\end{bmatrix} \\
=& \frac{a+b}{2}\begin{bmatrix} 1 & 1  \end{bmatrix} + \frac{1}{2} \begin{bmatrix} 0 & -2b\end{bmatrix} \\ =& \frac{a+b}{2}\begin{bmatrix} 1 & 1  \end{bmatrix} + \begin{bmatrix} 0 & -b\end{bmatrix}.
\end{align*} 

Hence $$v \in \frac{1}{2}\mathbb{Z} \begin{bmatrix} 1 & 1  \end{bmatrix} \oplus 0 \times \mathbb{Z} \subseteq \mathbb{Z} \left[ \frac{1}{2} \right]\begin{bmatrix} 1 & 1 \end{bmatrix} \oplus 0 \times \mathbb{Z}.$$
Assume now that  $vA_{F^2_1}^k \in \mathbb{Z}^2$ implies that $v \in \frac{1}{2^k}\mathbb{Z} \begin{bmatrix} 1 & 1 \end{bmatrix} \oplus 0 \times \mathbb{Z}$.
If $vA_{F^2_1}^{k+1} \in \mathbb{Z}^3$, then $vA_{F^2_1}=\frac{a}{2^k}\begin{bmatrix} 1 & 1 \end{bmatrix} + \begin{bmatrix} 0 & b \end{bmatrix}$, where $a,b \in \mathbb{Z}$. Thus \begin{align*}
v=&\frac{a}{2^{k+1}}\begin{bmatrix} 1 & 1 \end{bmatrix} + \frac{1}{2} \begin{bmatrix} b & -b \end{bmatrix}\\ =& \frac{a}{2^{k+1}}\begin{bmatrix} 1 & 1 \end{bmatrix} + \frac{1}{2} \begin{bmatrix} b & b \end{bmatrix} + \frac{1}{2} \begin{bmatrix} 0 & -2b\end{bmatrix} \\
=& \frac{a+2^k b}{2^{k+1}}\begin{bmatrix} 1 & 1 \end{bmatrix} + \frac{1}{2} \begin{bmatrix} 0 & -2b\end{bmatrix} \\ =& \frac{a+2^k b}{2^{k+1}}\begin{bmatrix} 1 & 1 \end{bmatrix} + \begin{bmatrix} 0 & -b\end{bmatrix}.
\end{align*} Hence $v \in \frac{1}{2^{k+1}}\mathbb{Z} \begin{bmatrix} 1 & 1 \end{bmatrix} \oplus 0 \times \mathbb{Z} \subseteq \mathbb{Z} \left[ \frac{1}{2} \right]\begin{bmatrix} 1 & 1  \end{bmatrix} \oplus 0 \times \mathbb{Z}$. So 

$$\Delta_{A_{F^2_1}} \subseteq \mathbb{Z} \left[ \frac{1}{2} \right]\begin{bmatrix} 1 & 1 \end{bmatrix} \oplus 0 \times \mathbb{Z},$$ and thus the equality follows. 
\end{proof}

Since $F^2_1$ is primitive, we can use Corollary~\ref{halfspace} for calculating the positive cone.  Therefore we have:

\begin{align*}
K_0^{\gr}(L(F^2_1)) &=  \mathbb{Z} \left[ \frac{1}{2} \right]\begin{bmatrix} 1 & 1 \end{bmatrix} \oplus 0 \times \mathbb{Z}, \\
K_0^{\gr}(L(F^2_1))^+ & =\left\{ {\bf u} \in \mathbb{Z} \left[ \frac{1}{2} \right]\begin{bmatrix} 1 & 1 \end{bmatrix} \oplus 0 \times \mathbb{Z} \,  | \, {\bf u} \cdot {\bf z} > 0 \right\}  \cup \, \{0\}, \text{ where }\bf{z} \text{ is } \left[
\begin{array}{c}
  2 \\ 1 \\
\end{array}
\right],\\
{}^1(a,b)&=(a+b,2a).
\end{align*}

Thus the graded $K$-theory of Leavitt path algebras of $E^2_{3}, E^2_{4}$ and  $E^3_{4}$ coincides with those of $F^2_{1}$ (and $F^2_{2}$) and all the associated adjacency matrices are all strongly shift equivalent. 

Note that although $F^2_1$ and $F^2_2$ are shift equivalent and thus there is an order-preserving module isomorphism $K^{\gr}_0(L_\K(F^2_1)) \cong K^{\gr}_0(L_\K(F^2_2))$ but the 
Perron-Frobenius vectors of these matrices are different. Furthermore, there is no pointed isomorphism between the graded $K$-theory of these algebras implying $L_\K(F^2_1)$ is not graded isomorphic to $L_\K(F^2_2)$. Indeed, any attempt to find an invertible matrix $T\in \GL_2( \mathbb{Z} \left[ \frac{1}{2} \right])$ such that $T F^2_1= F^2_2 T, $ preserving the positive cones and be pointed, i.e., $ \begin{bmatrix} 1 & 1  \\ \end{bmatrix}T =\begin{bmatrix} 1 & 1 \\ \end{bmatrix} $ yields an unsolvable system.

This is one the smallest examples that there is a graded Morita equivalence $\mbox{Gr-} L_\K(F^2_1) \rightarrow \mbox{Gr-} L_\K(F^2_2)$ but the algebras are not graded isomorphic.

Indeed we have:

- $E^2_{3}$ is elementary shift equivalent to $E^3_{4}$ as $E^2_{3} = RS$ and $E^3_{4}=SR$, where, 
$$R=\left[
\begin{array}{ccc}
 0 & 0 & 1 \\
 1 & 0 & 0 \\
 1 & 1 & 0 \\
\end{array}
\right] \text{ and } S= \left[
\begin{array}{ccc}
 1 & 0 & 1 \\
 0 & 1 & 0 \\
 0 & 1 & 0 \\
\end{array}
\right]$$

- $E^2_{4}$ is elementary shift equivalent to $E^3_{4}$ as $E^2_{4} = RS$ and $E^3_{4}=SR$, where, 
$$R=\left[
\begin{array}{ccc}
 1 & 0 & 0 \\
 0 & 1 & 1 \\
 1 & 0 & 0 \\
\end{array}
\right] \text{ and }  S=\left[
\begin{array}{ccc}
 1 & 1 & 0 \\
 0 & 0 & 1 \\
 1 & 0 & 0 \\
\end{array}
\right]$$

- $E^2_3$ and  $E^2_4$ are not elementary shift equivalent. 

In fact there is no $R, S\in \M_3(\mathbb N)$ such that  $E^2_3=RS$ and $E^2_4=SR$. This can be checked by a simple computer program (see~\S\ref{mathematicacode}). Since the entires of  $E^2_3$ and  $E^2_4$ consist of only $0$ and $1$, then the entries of $R$ and $S$ have to be either $0$ or $1$ as well. Thus there are only $512$ of these matrices to check. There are no such $R$ and $S$ and thus $E^2_3$ and  $E^2_4$ are not elementary shift equivalent. However,   $E^2_3$ and  $E^2_4$ are strong shift equivalent with lag 2, via the elementary shift equivalence with  $E^3_4$.

We have shown that the graded $K$-theory of $E^2_3, E^2_4$ and $E^3_4$  are order-preserving module isomorphic and consequently they are strongly shift equivalent and the associated Leavitt path algebras are graded Morita equivalent.  Next we show that if the graded $K$-theories are further pointed isomorphic, then the associated Leavitt path algebras are graded isomorphic.

\medskip 

$\bullet$   \emph{$L_\K(E^2_3)$ is not graded isomorphic to  $L_\K(E^3_4)$. }

\medskip

We show that there is no pointed module isomorphism between the corresponding graded $K_0$-groups of these algebras, thus they can't be graded isomorphic to each other.

First note that $E^2_3$ is elementary shift equivalent to $F^2_1$, as $E^2_{3} = R_1S_1$ and $F^2_{1}=S_1R_1$, with 
$$R_1=\left[
\begin{array}{cc}
 0 & 1 \\
 1 & 0 \\
 1 & 1 \\
\end{array}
\right] \text{ and } S_1=
\left[
\begin{array}{ccc}
 1 & 0 & 1 \\
 0 & 1 & 0 \\
\end{array}
\right].
$$
 We also have that $E^3_4$ is elementary shift equivalent to $F^2_1$, as $E^3_4 = R_2S_2$ and $F^2_1=S_2R_2$, with 
  $$R_2=\left[
\begin{array}{cc}
 1 & 0 \\
 0 & 1 \\
 0 & 1 \\
\end{array}
\right] \text{  and } 
S_2=\left[
\begin{array}{ccc}
 1 & 1 & 1 \\
 1 & 0 & 0 \\
\end{array}
\right].$$

Following \S\ref{sec:symbolic-dynamics}, the matrices $R_1$ and $R_2$ induce order-preserving module isomorphisms $\phi_{R_1}$ and $\phi_{R_2}$ (diagram below) which send the order units 
 $\begin{bmatrix} 1 & 1 & 1 \end{bmatrix} R_1= \begin{bmatrix} 2 & 2  \end{bmatrix}$ and  $\begin{bmatrix} 1 & 1 & 1 \end{bmatrix} R_2= \begin{bmatrix} 1 & 2  \end{bmatrix}$:
\[
\begin{tikzcd}
K_0^{\gr}(L_\K(E^2_3)) \arrow[r, "\phi_{R_1}" ] \arrow[d, dashed,"\mu" ]
& K_0^{\gr}(L_\K(F^2_1)) \arrow[d, "T" ] \\
K_0^{\gr}(L_\K(E^3_4)) \arrow[r, "\phi_{R_2}"]
& K_0^{\gr}(L_\K(F^2_1))
\end{tikzcd}
\]

If there was a pointed isomorphism  $\mu \colon K_0^{\gr}(L_\K(E^2_3)) \rightarrow K_0^{\gr}(L_\K(E^3_4))$, there would need to be a $T\in \GL_2( \mathbb{Z} \left[ \frac{1}{2} \right])$ such that $ T F^2_1= F^2_1 T$ and $\begin{bmatrix} 2 & 2  \end{bmatrix} T = \begin{bmatrix} 1 & 2  \end{bmatrix}$. Thus for 
 $T=\begin{bmatrix}
a & b \\
c & d 
\end{bmatrix}$ 
 these conditions give: 
\begin{align*}
\begin{bmatrix}
1 & 2 \\
1 & 0 
\end{bmatrix}
\begin{bmatrix}
a & b \\
c & d 
\end{bmatrix} & =
\begin{bmatrix}
a & b \\
c & d 
\end{bmatrix}
\begin{bmatrix}
1 & 2 \\
1 & 0 
\end{bmatrix}\\ 
\begin{bmatrix}
2 & 2 
\end{bmatrix}
\begin{bmatrix}
a & b \\
c & d 
\end{bmatrix} & =
\begin{bmatrix}
1 & 2 
\end{bmatrix}
\end{align*}

This yields four equations, 
$b=2c$, $a=c+d$,  $2a+2c=1$ and $b+d=1$. However, this system has no solution, implying that $L_\K(E^2_3)$ can't be graded isomorphic to  $L_\K(E^3_4)$. 

\medskip

$\bullet$   \emph{$L_\K(E^2_4)$ is not graded isomorphic to  $L_\K(E^3_4)$. }

\medskip 

We show that there is no pointed module isomorphism between the corresponding graded $K_0$-groups of these algebras, thus they can't be graded isomorphic to each other.

First note that $E^2_4$ is elementary shift equivalent to $F^2_2$ as $E^2_4=R_1 S_1$ and $F^2_2=S_1 R_1$ with
\[R_1=\left[
\begin{array}{cc}
 1 & 0 \\
 0 & 1 \\
 1 & 0 \\
\end{array}
\right] \text{  and } S_1=\left[
\begin{array}{ccc}
 1 & 1 & 0 \\
 1 & 0 & 1 \\
\end{array}
\right].
\]
We also have that $E^3_4$ is elementary shift equivalent to $F^2_2$ as  $E^3_4=R_2 S_2$ and $F^2_2=S_2 R_2$ with 
 \[R_2=\left[
\begin{array}{cc}
 1 & 1 \\
 1 & 0 \\
 1 & 0 \\
\end{array}
\right] \text{ and } S_2=\left[
\begin{array}{ccc}
 1 & 0 & 0 \\
 0 & 1 & 1 \\
\end{array}
\right].
\]
If there was a pointed isomorphism of  $\mu \colon  K_0^{\gr}(L_\K(E^2_4)) \rightarrow K_0^{\gr}(L_\K(E^3_4))$, similar to the argument of the first case, there would need to be a map $T$ such that $ T F^2_2= F^2_2 T$ and $\begin{bmatrix} 2 & 1  \end{bmatrix} T = \begin{bmatrix} 3 & 1  \end{bmatrix}$. We get a system of equations that is not solvable, implying that $L_\K(E^2_4)$ can't be graded isomorphic to  $L_\K(E^3_4)$.

\medskip

$\bullet$   \emph{$L_\K(E^2_3)$ is graded isomorphic to  $L_\K(E^2_4)$. }

\medskip

First note that $E^2_3$ is elementary shift equivalent to $F^2_1$, as  $E^2_3=R_1 S_1$ and $F^2_1=S_1 R_1$ with 
\[R_1=\left[
\begin{array}{cc}
 0 & 1 \\
 1 & 0 \\
 1 & 1 \\
\end{array}
\right] \text{  and } S_1=\left[
\begin{array}{ccc}
 1 & 0 & 1 \\
 0 & 1 & 0 \\
\end{array}
\right].\]

We also have that $E^2_4$ is elementary shift equivalent to $F^2_2$, as  $E^2_4=R_2 S_2$ and $F^2_2=S_2 R_2$ with 
\[R_2=
\left[
\begin{array}{cc}
 1 & 0 \\
 0 & 1 \\
 1 & 0 \\
\end{array}
\right] \text { and } S_2=\left[
\begin{array}{ccc}
 1 & 1 & 0 \\
 1 & 0 & 1 \\
\end{array}
\right].
\]

Recall that $F^2_{1}$ is elementary shift equivalent to $F^2_{2}$ as $F^2_{1} = RS$ and $F^2_{2}=SR$, where, 
$$R= \begin{bmatrix}
    1 & 1  \\
    1 & 0  \\
   \end{bmatrix} \text{  and } 
   S= \begin{bmatrix}
    1 & 0  \\
    0 & 2  \\
 \end{bmatrix}.$$

Following \S\ref{sec:symbolic-dynamics}, the matrices $R_1$ and $R_2$ and $S$  induce order-preserving module isomorphisms $\phi_{R_1}$, $\phi_{R_2}$ and $\phi_S$ (diagram below) which send the order units 
 $\begin{bmatrix} 1 & 1 & 1 \end{bmatrix} R_1= \begin{bmatrix} 2 & 2  \end{bmatrix}$ and  $\begin{bmatrix} 1 & 1 & 1 \end{bmatrix} R_2 S= \begin{bmatrix} 2 & 2  \end{bmatrix}$ in $K_0^{\gr}(L_\K(F^2_1))$:

\[
\begin{tikzcd}
K_0^{\gr}(L_\K(E^2_4)) \arrow[r, "\phi_{R_2}" ] 
& K_0^{\gr}(L_\K(F^2_2)) \arrow[d, "\phi_S" ] \\
K_0^{\gr}(L_\K(E^2_3)) \arrow[r, "\phi_{R_1}"]
& K_0^{\gr}(L_\K(F^2_1))
\end{tikzcd}
\]

Now Proposition~\ref{lateaddition} implies that 
\[L_\K(E^2_3)\cong_{\gr} L_\K(E^2_4) \cong_{\gr} \M_2(L_\K(F^2_1))\cong_{\gr} L_\K(F^2_1),\]
where the last isomorphism follows from an argument similar to (\ref{data475}): We have 
$K_0^{\gr}(L_\K(F^2_1))^+ \cong T_{F^2_1}$ and under this isomorphism $[L_\K(F^2_1)]\in  K_0^{\gr}(L_\K(F^2_1))^+$ is represented by $w_1+w_2 \in T_{F^2_1}$. In the talented monoid $T_{F^2_1}$, we have
$$w_1+w_2=w_1(1)+w_2(1)+w_1(1)+w_2(1).$$ Passing this back to $K_0^{\gr}(L_\K(F^2_1))^+$ and taking $\End_{L_\K(F^2_1)}$ of both sides, we obtain
$$L_\K(F^2_1)  \cong_{\gr} \M_2\big(L_\K(F^2_1) \big)\big(1,1\big) \cong_{\gr} \M_2\big(L_\K(F^2_1) \big).$$

{\bf 15.} Consider the graphs $E^2_{5}$ and $E^3_{1}$ below with the adjacency matrices $A_{E^2_5}=\begin{bmatrix} 0 & 1 & 0 \\ 0 & 1 & 1 \\ 1 & 1 & 0 \\
 \end{bmatrix}$ and $E^3_1=\begin{bmatrix} 0 & 1 & 0 \\ 0 & 0 & 1 \\ 1 & 1 & 1 \\
 \end{bmatrix}$, respectively.

\begin{figure}[H]
\centering
\begin{tikzpicture}[scale=3.5]
\fill (0,0)  circle[radius=.6pt];
\fill (-.35,-.61)  circle[radius=.6pt];
\fill (.35,-.61)  circle[radius=.6pt];
\draw (0,-.15) node{$v_1$};
\draw (-.70, 0) node{$E^2_5:$};
\draw (-.35+.13,-.535) node{$v_3$};
\draw (.35-.13,-.535) node{$v_2$};
\draw[->, shorten >=5pt] (0,0) to (.35,-.61);
\draw[->, shorten >=5pt] (.35,-.61) to (-.35,-.61);
\draw[->, shorten >=5pt] (-.35,-.61) to [in=-120,out=-60] (.35,-.61) ;
\draw[->, shorten >=5pt] (-.35,-.61) to (0,0);
\draw[->, shorten >=5pt] (.35,-.61) to[in=10,out=-70, loop] (-.35,-.61);
\end{tikzpicture}
\hspace{10pt}
\begin{tikzpicture}[scale=3.5]
\fill (0,0)  circle[radius=.6pt];
\fill (-.35,-.61)  circle[radius=.6pt];
\fill (.35,-.61)  circle[radius=.6pt];
\draw (0,-.15) node{$v_1$};
\draw (-.70, 0) node{$E^3_1:$};
\draw (-.35+.13,-.535) node{$v_3$};
\draw (.35-.13,-.535) node{$v_2$};
\draw[->, shorten >=5pt] (0,0) to (.35,-.61);
\draw[->, shorten >=5pt] (.35,-.61) to (-.35,-.61);
\draw[->, shorten >=5pt] (-.35,-.61) to [in=-120,out=-60] (.35,-.61) ;
\draw[->, shorten >=5pt] (-.35,-.61) to (0,0);
\draw[->, shorten >=5pt] (-.35,-.61) to[in=250,out=170, loop] (-.35,-.61);
\end{tikzpicture}
\end{figure}
 
 Since $\det(E^2_5)=\det(E^3_1)=1$  and the graphs are  primitive,  Lemma~\ref{DL} and Corollary~\ref{halfspace} give:
 \begin{align*}
K_0^{\gr}(L_\K(E^2_5))&=   \mathbb{Z} ^3, \\
K_0^{\gr}(L_\K(E^2_5))^+&=\left\{ {\bf u} \in \mathbb{Z}^3 \, | \, {\bf u} \cdot {\bf z} > 0 \right\}  \cup \, \{0\}, \text{ where } \bf{z} \text{ is } \left[
\begin{array}{c}
 \frac{1}{3} \left(1-\frac{2\ 2^{2/3}}{\sqrt[3]{13+3
   \sqrt{33}}}+\frac{\sqrt[3]{13+3 \sqrt{33}}}{2^{2/3}}\right) \\
   \frac{1}{6} \sqrt[3]{54-6
   \sqrt{33}}+\frac{\sqrt[3]{9+\sqrt{33}}}{6^{2/3}} \\ 1 \\
\end{array}
\right],\\
{}^1(a,b,c)&=(c,a+b+c,b).
\end{align*}

whereas, 
\begin{align*}
K_0^{\gr}(L_\K(E^3_1))&=   \mathbb{Z} ^3, \\
K_0^{\gr}(L_\K(E^3_1))^+&=\left\{ {\bf u} \in \mathbb{Z}^3 \, | \, {\bf u} \cdot {\bf z} > 0 \right\}  \cup \, \{0\}, \text{ where } \bf{z} \text{ is } 
\begin{bmatrix}
 \frac{1}{3} \left(-1-\frac{4\ 2^{2/3}}{\sqrt[3]{13+3
   \sqrt{33}}}+\sqrt[3]{2 \left(13+3 \sqrt{33}\right)}\right) \\
   \frac{1}{3} \left(-1-\frac{2}{\sqrt[3]{17+3
   \sqrt{33}}}+\sqrt[3]{17+3 \sqrt{33}}\right) \\ 1 \\
\end{bmatrix},
\\
{}^1(a,b,c)&=(c,a+c,b+c).
\end{align*}

The invertible matrix  $T = 
\begin{bmatrix}
    1 & 0 & 0 \\
    0 & 1 & 0 \\
    0 & 1 & 1 \\
\end{bmatrix}
\in \GL_3(\mathbb Z)$ induces an order-preserving isomorphism 
\begin{equation*}
T \colon  K_0^{\gr}(L_\K(E^2_5)) \longrightarrow K_0^{\gr}(L_\K(E^3_{1})),
\end{equation*}
  i.e., $A_{E^2_{5}}T=TA_{E^3_1}$. Note that $T$  takes Perron-Frobenius eigenvalue $\bf{z}$ of $E^2_5$ to a positive scalar multiple of the eigenvector $\bf{z}^\prime$ of $E^3_{1}$, thus preserving the positive cones. 

Therefore the matrices $A_{E^2_{5}}$ and $A_{E^3_{1}}$ are shift equivalent. We show that these matrices are indeed strong shift equivalent.  Starting from $E^2_5$, the maximal out-split at $v_3$ produces the graph below which coincides with the maximal in-split of $E^{3}_1$ at $v_2$.

\begin{figure}[H]
\centering
\begin{tikzpicture}[scale=3.5]
\fill (0,0)  circle[radius=.6pt];
\fill (.5,0)  circle[radius=.6pt];
\fill (0,-.5)  circle[radius=.6pt];
\fill (.5,-.5)  circle[radius=.6pt];
\draw[->, shorten >=5pt] (.5,0) to (0,0);
\draw[->, shorten >=5pt] (0,0) to [out=-45,in=45](0,-.5);

\draw[->, shorten >=5pt] (.5,-.5) to (.5,0);
\draw[->, shorten >=5pt] (0,0) to (.5,-.5);

\draw[->, shorten >=5pt] (0,0) to[in=135,out=45, loop] (0,0);
\draw[->, shorten >=5pt] (0,-.5) to[in=-120,out=135,] (0,0);
\end{tikzpicture}
\end{figure}

Indeed: 

 - $E^2_{5}$ is elementary shift equivalent to  $E^3_{1}$ as  $E^2_{5} = RS$ and $E^3_{1}=SR$ with
 \[R= \begin{bmatrix}
    0 & 1 & 0 \\
    0 & 0 & 1 \\
    1 & 1 & 0 \\
    \end{bmatrix} \text{ and } S= \begin{bmatrix}
    1 & 0 & 0 \\
    0 & 1 & 0 \\
    0 & 1 & 1 \\
\end{bmatrix}.\]

\medskip

$\bullet$  \emph{$L_\K(E^2_5)$ is not graded isomorphic to  $L_\K(E^3_1)$.}

\medskip

There is no pointed module isomorphism between the graded $K$-theory of these algebras:  Any attempt to find an invertible matrix $T\in \GL_3(\mathbb Z)$ such that $T E^2_{5}=E^3_{1} T,$ with the property that $ \begin{bmatrix} 1 & 1 & 1 \\ \end{bmatrix}T =\begin{bmatrix} 1 & 1 & 1 \\ \end{bmatrix} $ yields a system whose solution is not realisable over $\mathbb{Z}$.

{\bf 16.} Consider the graphs $E^2_{6}$ and $E^3_2$ below with the adjacency matrics $A_{E^2_6}= \begin{bmatrix} 1 & 1 & 0 \\ 1 & 1 & 1 \\ 1 & 1 & 0 \\ \end{bmatrix}$ and 
$A_{E^3_2}= \begin{bmatrix} 0 & 1 & 0 \\ 1 & 1 & 1 \\ 1 & 1 & 1 \\ \end{bmatrix}$, respectively.

\begin{figure}[H]
\centering
\begin{tikzpicture}[scale=3.5]
\fill (0,0)  circle[radius=.6pt];
\fill (-.35,-.61)  circle[radius=.6pt];
\fill (.35,-.61)  circle[radius=.6pt];
\draw (0,-.15) node{$v_1$};
\draw (-.70, 0) node{$E^2_6:$};
\draw (-.35+.13,-.535) node{$v_3$};
\draw (.35-.13,-.535) node{$v_2$};
\draw[->, shorten >=5pt] (0,0) to (.35,-.61);
\draw[->, shorten >=5pt] (.35,-.61) to [in=0,out=60] (0,0);
\draw[->, shorten >=5pt] (.35,-.61) to (-.35,-.61);
\draw[->, shorten >=5pt] (-.35,-.61) to [in=-120,out=-60] (.35,-.61) ;
\draw[->, shorten >=5pt] (-.35,-.61) to (0,0);
\draw[->, shorten >=5pt] (0,0) to[in=130,out=50, loop] (0,0);
\draw[->, shorten >=5pt] (.35,-.61) to[in=10,out=-70, loop] (-.35,-.61);

\end{tikzpicture}
\begin{tikzpicture}[scale=3.5]
\fill (0,0)  circle[radius=.6pt];
\fill (-.35,-.61)  circle[radius=.6pt];
\fill (.35,-.61)  circle[radius=.6pt];
\draw (0,-.15) node{$v_1$};
\draw (-.70, 0) node{$E^3_2:$};
\draw (-.35+.13,-.535) node{$v_3$};
\draw (.35-.13,-.535) node{$v_2$};
\draw[->, shorten >=5pt] (0,0) to (.35,-.61);
\draw[->, shorten >=5pt] (.35,-.61) to [in=0,out=60] (0,0);
\draw[->, shorten >=5pt] (.35,-.61) to (-.35,-.61);
\draw[->, shorten >=5pt] (-.35,-.61) to [in=-120,out=-60] (.35,-.61) ;
\draw[->, shorten >=5pt] (-.35,-.61) to (0,0);
\draw[draw=white, ->, shorten >=5pt] (0,0) to[in=130,out=50, loop] (0,0);
\draw[->, shorten >=5pt] (-.35,-.61) to[in=250,out=170, loop] (-.35,-.61);
\draw[->, shorten >=5pt] (.35,-.61) to[in=10,out=-70, loop] (-.35,-.61);
\end{tikzpicture}
\end{figure}

We have $\det(E^2_6)=\det(E^3_{2})=0$. However they are the result of the out-split and in-split of the graph below.

\begin{figure}[H]
\centering
\begin{tikzpicture}[scale=3.5]
\fill (0,0)  circle[radius=.6pt];
\draw (0,-.2) node{$w_1$};
\draw (-.5,.2) node{$F^3_2:$};
\fill (.5,0)  circle[radius=.6pt];
\draw (.45,-.2) node{$w_2$};
\draw[->, shorten >=5pt] (.5,0) to[in=-90,out=0, loop] (.5,0);
\draw[->, shorten >=5pt] (.5,0) to[in=90,out=0, loop] (.5,0);
\draw[->, shorten >=5pt] (0,0) to[in=120,out=60] (.5,0);
\draw[->, shorten >=5pt] (.5,0) to[in=-60,out=-120] (0,0);
\draw (.8,.15) node{$f$};
\draw (.8,-.15) node{$g$};
\draw (.25,.2) node{$e$};
\draw (.25,.-.2) node{$h$};

\end{tikzpicture}
\end{figure}

We have:

- $E^2_6$ is obtained from $F^3_2$ by an out-split at  $w_2$ with partition $\{h,f\}, \{ g\}$.

- $E^3_2$  is obtained from $F^3_2$ by an in-split at $w_2$ with partition $\{e,f \} , \{ g\}$.

Thus the graded $K$-theory of $L_\K(E^2_6)$ and $L_\K(E^3_2)$ are the same as that of $L_\K(F^3_2)$.  Since $\det(F^3_2) =1$  and the graph  is primitive,  Lemma~\ref{DL} and Corollary~\ref{halfspace} give:

\begin{align*}
K_0^{\gr}(L_\K(F^3_2))& =   \mathbb{Z} ^2, \\
K_0^{\gr}(L_\K(F^3_2))^+ & =\left\{ {\bf u} \in \mathbb{Z}^2 \, | \, {\bf u} \cdot {\bf z} > 0 \right\}  \cup \, \{0\}, \text{  where } \bf{z} \text{  is }\left[
\begin{array}{c}
 1+ \sqrt{2} \\
   1 \\
\end{array}
\right],\\
{}^1(a,b) & =(b,a+2b).
\end{align*}

Indeed: 

 - $E^2_{6}$ is elementary shift equivalent to  $E^3_{2}$ as  $E^2_{6} = RS$ and $E^3_{2}=SR$ with
 \[R= \begin{bmatrix}
    0 & 0 & 1 \\
    1 & 1 & 0 \\
    0 & 1 & 0 \\
    \end{bmatrix} \text{ and } S= \begin{bmatrix}
    0 & 0 & 1 \\
    1 & 1 & 0 \\
    1 & 1 & 0 \\
\end{bmatrix}.\]

\medskip

$\bullet$  \emph{$L_\K(E^2_6)$ is not graded isomorphic to  $L_\K(E^3_2)$.}

\medskip

First note that $E^2_6$ is elementary shift equivalent to $F^3_2$, as  $E^2_6=R_1 S_1$ and $F^3_2=S_1 R_1$ with 
\[R_1=\left[
\begin{array}{cc}
 1 & 0 \\
 1 & 1 \\
 1 & 0 \\
\end{array}
\right] \text{  and } S_1=\left[
\begin{array}{ccc}
 1 & 1 & 0 \\
 0 & 0 & 1 \\
\end{array}
\right].\]

We also have that $E^3_2$ is elementary shift equivalent to $F^3_2$, as  $E^3_2=R_2 S_2$ and $F^3_2=S_2 R_2$ with 
\[R_2=
\left[
\begin{array}{cc}
 0 & 1 \\
 1 & 0 \\
 1 & 0 \\
\end{array}
\right] \text { and } S_2=\left[
\begin{array}{ccc}
 1 & 1 & 1 \\
 0 & 1 & 0 \\
\end{array}
\right].
\]

Following \S\ref{sec:symbolic-dynamics}, the matrices $R_1$ and $R_2$  induce order-preserving module isomorphisms $\phi_{R_1}$, $\phi_{R_2}$  (diagram below) which send the order units 
 $\begin{bmatrix} 1 & 1 & 1 \end{bmatrix} R_1= \begin{bmatrix} 3 & 1  \end{bmatrix}$ and  $\begin{bmatrix} 1 & 1 & 1 \end{bmatrix} R_2 S= \begin{bmatrix} 2 & 1  \end{bmatrix}$ in $K_0^{\gr}(L_\K(F^3_2))$:

\[
\begin{tikzcd}
K_0^{\gr}(L_\K(E^2_6)) \arrow[r, "\phi_{R_2}" ] 
& K_0^{\gr}(L_\K(F^3_2)) \arrow[d, "\phi_T" ] \\
K_0^{\gr}(L_\K(E^3_2)) \arrow[r, "\phi_{R_1}"]
& K_0^{\gr}(L_\K(F^3_2))
\end{tikzcd}
\]

 Any attempt to find an invertible matrix $T\in \GL_2(\mathbb Z)$ such that $T F^3_{2}=F^3_{2} T,$ with the property that $ \begin{bmatrix} 3 & 1  \\ \end{bmatrix}T =\begin{bmatrix} 2 & 1  \\ \end{bmatrix} $ yields a system whose solution is not realisable over $\mathbb{Z}$. This implies that there is no pointed order-preserving isomorphism between $L_\K(E^2_6))$ and $L_\K(E^3_2))$.

{\bf 17.} Consider the graphs $E^3_{3}$ below with the adjacency matrix  $A_{E^3_3}= \begin{bmatrix}
    1 & 1 & 1 \\ 1 & 1 & 1 \\ 1 & 1 & 1 \\ \end{bmatrix}$.

\begin{figure}[H]
\centering
\begin{tikzpicture}[scale=3.5]
\fill (0,0)  circle[radius=.6pt];
\fill (-.35,-.61)  circle[radius=.6pt];
\fill (.35,-.61)  circle[radius=.6pt];
\draw (0,-.15) node{$v_1$};
\draw (-.70, 0) node{$E^3_3:$};
\draw (-.35+.13,-.535) node{$v_3$};
\draw (.35-.13,-.535) node{$v_2$};
\draw[->, shorten >=5pt] (0,0) to (.35,-.61);
\draw[->, shorten >=5pt] (.35,-.61) to [in=0,out=60] (0,0);
\draw[->, shorten >=5pt] (.35,-.61) to (-.35,-.61);
\draw[->, shorten >=5pt] (-.35,-.61) to [in=-120,out=-60] (.35,-.61) ;
\draw[->, shorten >=5pt] (-.35,-.61) to (0,0);
\draw[->, shorten >=5pt] (0,0) to [in=120,out=180
] (-.35,-.61) ;
\draw[->, shorten >=5pt] (0,0) to[in=130,out=50, loop] (0,0);
\draw[->, shorten >=5pt] (-.35,-.61) to[in=250,out=170, loop] (-.35,-.61);
\draw[->, shorten >=5pt] (.35,-.61) to[in=10,out=-70, loop] (-.35,-.61);
\end{tikzpicture}
\end{figure}

We have $\det(E^3_{3})=0$. However the graph $E^3_3$  can be obtained by maximal out-splitting of the graph $R^1_3$ below.

\begin{figure}[H]
\centering
\begin{tikzpicture}[scale=3.5]
\fill (0,0)  circle[radius=.6pt];
\draw (-.70, 0) node{$R^1_3:$};

\draw[->, shorten >=5pt] (0,0) to[in=105+30, out=15+30, loop] (0,0);
\draw[->, shorten >=5pt] (0,0) to[in=225+30, out=135+30, loop] (0,0);
\draw[->, shorten >=5pt] (0,0) to[in=-15+30, out=255+30, loop] (0,0);
\end{tikzpicture}
\end{figure}

Since the adjacency matrix $A_{R^1_3}=[3]$, then (\ref{linkkgroup}) (or Corollary~\ref{alternativeformdel}) immediately gives: 
\begin{align*}
K_0^{\gr}(L_\K(R^1_3)) & =   \mathbb{Z}\left[ \frac{1}{3} \right] , \\
K_0^{\gr}(L_\K(R^1_3))^+&= \mathbb{N}\left[ \frac{1}{3} \right]
,\\
{}^1a&=3a.
\end{align*}

As $E^3_3$ is an out-split of $R^1_3$, the graded $K$-theory of its Leavitt path algebra coincides with that of $R^1_3$.

{\bf 18.}
Consider the graphs $E^4_{1}$ below with the adjacency matrix $A_{E^4_1}= \begin{bmatrix} 0 & 1 & 0 \\ 1 & 1 & 1 \\ 1 & 1 & 0 \\ \end{bmatrix}$.

\begin{figure}[H]
\centering
\begin{tikzpicture}[scale=3.5]
\fill (0,0)  circle[radius=.6pt];
\fill (-.35,-.61)  circle[radius=.6pt];
\fill (.35,-.61)  circle[radius=.6pt];
\draw (0,-.15) node{$v_1$};
\draw (-.70, 0) node{$E^4_1:$};
\draw (-.35+.13,-.535) node{$v_3$};
\draw (.35-.13,-.535) node{$v_2$};
\draw[->, shorten >=5pt] (0,0) to (.35,-.61);
\draw[->, shorten >=5pt] (.35,-.61) to [in=0,out=60] (0,0);
\draw[->, shorten >=5pt] (.35,-.61) to (-.35,-.61);
\draw[->, shorten >=5pt] (-.35,-.61) to [in=-120,out=-60] (.35,-.61) ;
\draw[->, shorten >=5pt] (-.35,-.61) to (0,0);
\draw[->, shorten >=5pt] (.35,-.61) to[in=10,out=-70, loop] (-.35,-.61);
\end{tikzpicture}
\end{figure}

Since $\det(E^4_{1})=1$ and the graph is  primitive,  Lemma~\ref{DL} and Corollary~\ref{halfspace} give:
\begin{align*}
K_0^{\gr}(L_\K(E^4_1)) & =   \mathbb{Z} ^3, \\
K_0^{\gr}(L_\K(E^4_1))^+ & =\left\{ {\bf u} \in \mathbb{Z}^3 \, | \, {\bf u} \cdot {\bf z} > 0 \right\}  \cup \, \{0\}, \text{  where } \bf{z} \text{ is } \left[
\begin{array}{c}
 \sqrt[3]{\frac{1}{3^2}}\, \sqrt[3]{\frac{1}{2}
   \left(9+\sqrt{93}\right)} -\sqrt[3]{\frac{2}{3
   \left(9+\sqrt{93}\right)}} \\ 
   \frac{1}{3}
   \left(1+\sqrt[3]{
   \frac{1}{2} \left(29-3
   \sqrt{93}\right)
   }+\sqrt[3]{\frac{1}{2} \left(29+3
   \sqrt{93}\right)}\right) \\ 1 \\
\end{array}
\right]
\\
{}^1(a,b,c) & =(b+c,a+b+c, b).
\end{align*}

{\bf 19.} Consider the graphs $E^4_{2}$ below with the adjacency matrix $A_{E^4_2}= \begin{bmatrix} 1 & 1 & 1 \\ 1 & 1 & 1 \\ 1 & 1 & 0 \\ \end{bmatrix}$.

\begin{figure}[H]
\centering
\begin{tikzpicture}[scale=3.5]
\fill (0,0)  circle[radius=.6pt];
\fill (-.35,-.61)  circle[radius=.6pt];
\fill (.35,-.61)  circle[radius=.6pt];
\draw (0,-.15) node{$v_1$};
\draw (-.70, 0) node{$E^4_2:$};
\draw (-.35+.13,-.535) node{$v_3$};
\draw (.35-.13,-.535) node{$v_2$};
\draw[->, shorten >=5pt] (0,0) to (.35,-.61);
\draw[->, shorten >=5pt] (.35,-.61) to [in=0,out=60] (0,0);
\draw[->, shorten >=5pt] (.35,-.61) to (-.35,-.61);
\draw[->, shorten >=5pt] (-.35,-.61) to [in=-120,out=-60] (.35,-.61) ;
\draw[->, shorten >=5pt] (-.35,-.61) to (0,0);
\draw[->, shorten >=5pt] (0,0) to [in=120,out=180
] (-.35,-.61) ;
\draw[->, shorten >=5pt] (0,0) to[in=130,out=50, loop] (0,0);
\draw[->, shorten >=5pt] (.35,-.61) to[in=10,out=-70, loop] (-.35,-.61);
\end{tikzpicture}
\end{figure}

We have $\det(E^4_{2})=0$. However the graph $E^4_2$  can be obtained by the out-splitting of the graph $F^3_1$  below at $w_2$ with partitions $\{ e,g\}$ and $\{ f,h \}$.

\begin{figure}[H]
\centering
\begin{tikzpicture}[scale=3.5]
\fill (0,0)  circle[radius=.6pt];
\draw (0,-.2) node{$w_1$};
\draw (-.2,.2) node{$F^3_1$};
\fill (.5,0)  circle[radius=.6pt];
\draw (.45,-.2) node{$w_2$};
\draw[->, shorten >=5pt] (.5,0) to[in=-90,out=0, loop] (.5,0);
\draw[->, shorten >=5pt] (.5,0) to[in=90,out=0, loop] (.5,0);
\draw[->, shorten >=5pt] (0,0) to[in=120,out=60] (.5,0);
\draw[->, shorten >=5pt] (.5,0) to[in=-60,out=-120] (0,0);
\draw[->, shorten >=5pt] (.5,0) to (0,0);
\draw (.25,-.2) node{$e$};
\draw (.8,.15) node{$f$};
\draw (.8,-.15) node{$g$};
\draw (.25,.07) node{$h$};
\end{tikzpicture}
\end{figure}

As $E^4_2$ is an out-split of $F^3_1$, its graded $K$-theory coincides with that of $F^3_1$.  We have $\det(F^3_{1})=-2$. In order to calculate $K_0^{\gr}(L_\K(F^3_1))$, we will use the alternative description of dimension group from Corollary~\ref{alternativeformdel}.

\begin{prop}
  We have   $\Delta_{A_{F^3_1}}= \left (\mathbb{Z}\left[ \frac{1}{2} \right] \right)^2$.
\end{prop}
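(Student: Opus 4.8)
The plan is to compute $\Delta_{A_{F^3_1}}$ directly from the two descriptions of the dimension data recalled above. Write $A:=A_{F^3_1}=\begin{bmatrix}0&1\\2&2\end{bmatrix}$; since $\det A=-2\neq0$, the matrix $A$ is invertible over $\mathbb Q$, so Corollary~\ref{alternativeformdel} gives $\Delta_A=\bigcup_{k\ge0}\mathbb Z^2A^{-k}$, while the proof of that corollary also records the shorter form $\Delta_A=\{\,v\in\mathbb Q^2\mid vA^m\in\mathbb Z^2\ \text{for some}\ m\ge0\,\}$. I will use one of these for each inclusion.

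For the inclusion $\Delta_A\subseteq(\mathbb Z[\tfrac12])^2$, observe that $A^{-1}=\tfrac12\begin{bmatrix}-2&1\\2&0\end{bmatrix}$ has all of its entries in $\mathbb Z[\tfrac12]$; since $\mathbb Z[\tfrac12]$ is a ring, so does every power $A^{-k}$, whence $\mathbb Z^2A^{-k}\subseteq(\mathbb Z[\tfrac12])^2$ for all $k$, and taking the union gives the inclusion. For the reverse inclusion the one point requiring care is that $A$ is \emph{nilpotent modulo $2$}: reducing entrywise modulo $2$ yields $\begin{bmatrix}0&1\\0&0\end{bmatrix}$, whose square vanishes. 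Hence $A^2=2B$ for an integer matrix $B$ (explicitly $B=\begin{bmatrix}1&1\\2&3\end{bmatrix}$), so $A^{2k}=2^kB^k$ with $B^k$ integral. Given $v\in(\mathbb Z[\tfrac12])^2$, choose $N\ge0$ with $2^Nv\in\mathbb Z^2$; then for every $k\ge N$ one gets $vA^{2k}=2^{\,k-N}\,(2^Nv)\,B^k\in\mathbb Z^2$, so $v\in\Delta_A$. Combining the two inclusions yields the claimed equality.

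I do not anticipate a real obstacle here: the argument runs parallel to the earlier proposition for $F^2_1$, and the only genuinely matrix-specific input is the modulo-$2$ nilpotency that makes $A^2$ (rather than $A$ itself) divisible by $2$. Should one prefer to stay closer to the $F^2_1$ template, the reverse inclusion can instead be obtained by induction on $k$, showing $\mathbb Z^2A^{-2k}=\bigl(\tfrac1{2^k}\mathbb Z\bigr)^2$; but the modulo-$2$ observation makes the computation essentially immediate, and from the equality the positive cone is then read off via Corollary~\ref{halfspace}, exactly as in the preceding cases, since $F^3_1$ is primitive.
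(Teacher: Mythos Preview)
Your proof is correct and takes a genuinely different, cleaner route than the paper. The paper argues the reverse inclusion by an explicit induction on $k$: starting from $\begin{bmatrix}1&1\end{bmatrix}A^{-1}=\begin{bmatrix}0&\tfrac12\end{bmatrix}$ and $\begin{bmatrix}0&\tfrac12\end{bmatrix}A^{-1}=\begin{bmatrix}\tfrac12&0\end{bmatrix}$, it shows that the span of $\begin{bmatrix}\tfrac{1}{2^k}&0\end{bmatrix}$ and $\begin{bmatrix}0&\tfrac{1}{2^k}\end{bmatrix}$ lies in $\Delta_A$ for each $k$ by pushing $\begin{bmatrix}\tfrac{1}{2^k}&\tfrac{1}{2^k}\end{bmatrix}$ through $A^{-1}$ and $A^{-2}$. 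Your observation that $A\equiv\begin{smallmatrix}0&1\\0&0\end{smallmatrix}\pmod 2$ is nilpotent, whence $A^2=2B$ with $B\in\M_2(\mathbb Z)$, replaces this bookkeeping with a single structural fact and handles all of $(\mathbb Z[\tfrac12])^2$ at once via $vA^{2k}=2^{k-N}(2^Nv)B^k$. This buys you a shorter argument that also explains \emph{why} the dimension group is all of $(\mathbb Z[\tfrac12])^2$ here rather than a proper subgroup as for $F^2_1$: the difference is precisely that $A_{F^2_1}$ is \emph{not} nilpotent modulo $2$. The paper's approach, by contrast, is closer in spirit to its proofs for $E^1_3$, $F^2_1$, and $E^6_1$, so it keeps the exposition uniform across cases.
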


\begin{proof} First note that $A_{F^3_1}^{-1}=\frac{1}{2}\begin{bmatrix}
 -2 & 1 \\
 2 & 0 \\
\end{bmatrix}.$

Since $\det(A_{A_{F^3_1}})=-2$ and that $\Delta_{A_{F^3_1}} \subseteq \left (\mathbb{Z}\left[ \frac{1}{2} \right] \right)^2$, we will show that one obtains the spanning set over $\mathbb{Z}$ of $\bigcup_{k=0}^\infty \{ \begin{bmatrix} \frac{1}{2^k} &  0\end{bmatrix},\begin{bmatrix} 0& \frac{1}{2^k}  \end{bmatrix} \}$ within $\Delta_{A_{F^3_1}}$ . We will prove this by induction.\\
When $k=0$, we have that $\{ \begin{bmatrix} 1 &  0\end{bmatrix},\begin{bmatrix} 0 & 1  \end{bmatrix} \} \subseteq \mathbb{Z}^2 \subseteq \Delta_{A_{F^3_1}}$.\\
When $k=1$, we have 
\begin{align*}
    &\begin{bmatrix} 1 & 1\end{bmatrix}A^{-1}_{F^3_1}= \begin{bmatrix} 0 & \frac{1}{2} \end{bmatrix}, \text{ and } \\
    &\begin{bmatrix} 1 & 1\end{bmatrix}A^{-2}_{F^3_1}= \begin{bmatrix} 0 & \frac{1}{2} \end{bmatrix} A^{-1}_{F^3_1} = \begin{bmatrix} \frac{1}{2} & 0 \end{bmatrix}. \\  
\end{align*}
Assume now that $\left( \frac{1}{2^k}\mathbb{Z} \right)^2 \subseteq \Delta_{A_{F^3_1}}$. This means that for all $\begin{bmatrix} a & b \end{bmatrix} \in \left( \frac{1}{2^k}\mathbb{Z} \right)^2$, there is a $\begin{bmatrix} c & d \end{bmatrix} \in \mathbb{Z}^2$ and an $l \geq 0$ such that $\begin{bmatrix} c & d \end{bmatrix}A^{-l}_{F^3_1}= \begin{bmatrix} a & b \end{bmatrix}$. \\
When $a=b=\frac{1}{2^k}$, we have 
\begin{align*}
     &\begin{bmatrix} c & d \end{bmatrix}A^{-l-1}_{F^3_1}= \begin{bmatrix} \frac{1}{2^k} & \frac{1}{2^k} \end{bmatrix}A^{-1}_{F^3_1} =  \begin{bmatrix} 0 & \frac{1}{2^{k+1}} \end{bmatrix}, \text{ and } \\
    &\begin{bmatrix} c & d\end{bmatrix}A^{l-2}_{F^3_1}= \begin{bmatrix} \frac{1}{2^k} & \frac{1}{2^k} \end{bmatrix}A^{-2}_{F^3_1} = \begin{bmatrix} 0 & \frac{1}{2^{k+1}} \end{bmatrix} A^{-1}_{F^3_1} = \begin{bmatrix} \frac{1}{2^{k+1}} & 0 \end{bmatrix}. \\     
\end{align*}
So  $\bigcup_{k=0}^{\infty} \left (\frac{1}{2^k}\mathbb{Z} \right)^2= \left (\mathbb{Z}\left[ \frac{1}{2} \right] \right)^2 \subseteq \Delta_{A_{F^3_1}}$ and thus the equality follows. 
\end{proof}

Since $F^3_1$ is primitive, we can use Corollary~\ref{halfspace} for calculating the positive cone. Therefore we have: 
\begin{align*}
K_0^{\gr}(L_\K(F^3_1)) & =  \left( \mathbb{Z} \left[ \frac{1}{2} \right] \right) ^2, \\
K_0^{\gr}(L_\K(F^3_1))^+ & =\left\{ {\bf u} \in \left( \mathbb{Z} \left[ \frac{1}{2} \right] \right) ^2 \, | \, {\bf u} \cdot {\bf z} > 0 \right\}  \cup \, \{0\}, \text{  where }\bf{z} \text{ is } \left[
\begin{array}{c}
 \frac{1}{2} \left(\sqrt{3}-1\right) \\ 1 \\
\end{array}
\right]
\\
{}^1(a,b)&=(2b,a+2b).
\end{align*}

{\bf 20.} Consider the graphs $E^5_{1}$ below with the adjacency matrix $A_{E^5_1}= \begin{bmatrix} 1 & 1 & 1 \\ 1 & 0 & 1 \\ 1 & 1 & 0 \\ \end{bmatrix}$.

\begin{figure}[H]
\centering
\begin{tikzpicture}[scale=3.5]
\fill (0,0)  circle[radius=.6pt];
\fill (-.35,-.61)  circle[radius=.6pt];
\fill (.35,-.61)  circle[radius=.6pt];
\draw (0,-.15) node{$v_1$};
\draw (-.70, 0) node{$E^5_1:$};
\draw (-.35+.13,-.535) node{$v_3$};
\draw (.35-.13,-.535) node{$v_2$};
\draw[->, shorten >=5pt] (0,0) to (.35,-.61);
\draw[->, shorten >=5pt] (.35,-.61) to [in=0,out=60] (0,0);
\draw[->, shorten >=5pt] (.35,-.61) to (-.35,-.61);
\draw[->, shorten >=5pt] (-.35,-.61) to [in=-120,out=-60] (.35,-.61) ;
\draw[->, shorten >=5pt] (-.35,-.61) to (0,0);
\draw[->, shorten >=5pt] (0,0) to [in=120,out=180
] (-.35,-.61) ;
\draw[->, shorten >=5pt] (0,0) to[in=130,out=50, loop] (0,0);
\end{tikzpicture}
\end{figure}

Since $\det(E^5_{1})=1$ and the graph is  primitive,  Lemma~\ref{DL} and Corollary~\ref{halfspace} give:

\begin{align*}
K_0^{\gr}(L_\K(E^5_1)) & =   \mathbb{Z} ^3, \\
K_0^{\gr}(L_\K(E^5_1))^+ & =\left\{ {\bf u} \in \mathbb{Z}^3 \, | \, {\bf u} \cdot {\bf z} > 0 \right\}  \cup \, \{0\}, \text{ where } \bf{z}\text{ is }\left[
\begin{array}{c}
 \sqrt{2} \\ 1 \\ 1 \\
\end{array}
\right]\\
{}^1(a,b,c) & =(a+b+c,a+c, a+b).
\end{align*}

{\bf 21.} Consider the graphs $E^6_{1}$ below with the adjacency matrix $A_{E^6_1}= \begin{bmatrix} 0 & 1 & 1 \\ 1 & 0 & 1 \\ 1 & 1 & 0 \\ \end{bmatrix}.$

\begin{figure}[H]
\centering
\begin{tikzpicture}[scale=3.5]
\fill (0,0)  circle[radius=.6pt];
\fill (-.35,-.61)  circle[radius=.6pt];
\fill (.35,-.61)  circle[radius=.6pt];
\draw (0,-.15) node{$v_1$};
\draw (-.70, 0) node{$E^6_1$};
\draw (-.35+.13,-.535) node{$v_3$};
\draw (.35-.13,-.535) node{$v_2$};
\draw[->, shorten >=5pt] (0,0) to (.35,-.61);
\draw[->, shorten >=5pt] (.35,-.61) to [in=0,out=60] (0,0);
\draw[->, shorten >=5pt] (.35,-.61) to (-.35,-.61);
\draw[->, shorten >=5pt] (-.35,-.61) to [in=-120,out=-60] (.35,-.61) ;
\draw[->, shorten >=5pt] (-.35,-.61) to (0,0);
\draw[->, shorten >=5pt] (0,0) to [in=120,out=180
] (-.35,-.61) ;
\end{tikzpicture}
\end{figure}

We have that $\det(E^6_{1})=2$. In order to calculate $K_0^{\gr}(L_\K(E^6_1))$, we will use the alternative description of
dimension group from Corollary~\ref{alternativeformdel}. The following proposition follows the same path as in Proposition~\ref{profidsh6}.

\begin{prop}\label{profidsh7}    We have $\Delta_{A_{E^6_1}}= \mathbb{Z}\left[ \frac{1}{2} \right] \begin{bmatrix} 1 & 1 & 1 \end{bmatrix} \oplus 0 \times \mathbb{Z}^2$.
\end{prop}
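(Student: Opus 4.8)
The plan is to mimic exactly the inductive argument already carried out for $E^1_3$ in Proposition~\ref{profidsh6} and for $F^2_1$, since the matrix $A_{E^6_1}$ has the same relevant structural features: determinant $2$, and the all-ones row vector $\begin{bmatrix}1&1&1\end{bmatrix}$ is a left eigenvector of the (scaled) inverse. First I would record that
\[
A_{E^6_1}^{-1} \;=\; \frac12\begin{bmatrix} -1 & 1 & 1 \\ 1 & -1 & 1 \\ 1 & 1 & -1 \end{bmatrix},
\]
and observe the key identity $\begin{bmatrix}1&1&1\end{bmatrix}\bigl(2A_{E^6_1}^{-1}\bigr)=\begin{bmatrix}1&1&1\end{bmatrix}$, i.e.\ $\begin{bmatrix}1&1&1\end{bmatrix}A_{E^6_1}^{-k}=\tfrac{1}{2^k}\begin{bmatrix}1&1&1\end{bmatrix}$. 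This immediately gives the inclusion $\mathbb{Z}\left[\frac12\right]\begin{bmatrix}1&1&1\end{bmatrix}\oplus 0\times\mathbb{Z}^2\subseteq \Delta_{A_{E^6_1}}$: each $\frac{a}{2^k}\begin{bmatrix}1&1&1\end{bmatrix}$ lands in $\mathbb{Z}^3$ after multiplying by $A_{E^6_1}^k$ (using the identity above, exactly as in Proposition~\ref{profidsh6}), and $\begin{bmatrix}0&b&c\end{bmatrix}\in\mathbb{Z}^3\subseteq\Delta_{A_{E^6_1}}$.

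For the reverse inclusion I would use Corollary~\ref{alternativeformdel}: every $v\in\Delta_{A_{E^6_1}}$ lies in $\mathbb{Z}^3 A_{E^6_1}^{-k}$ for some $k\ge 0$, and I prove by induction on $k$ that $v\in\frac{1}{2^k}\mathbb{Z}\begin{bmatrix}1&1&1\end{bmatrix}\oplus 0\times\mathbb{Z}^2$. The base case $k=0$ is $\mathbb{Z}^3\subseteq\mathbb{Z}\begin{bmatrix}1&1&1\end{bmatrix}\oplus 0\times\mathbb{Z}^2$. For the inductive step, if $vA_{E^6_1}^{k+1}\in\mathbb{Z}^3$ then $vA_{E^6_1}=\frac{a}{2^k}\begin{bmatrix}1&1&1\end{bmatrix}+\begin{bmatrix}0&b&c\end{bmatrix}$ with $a,b,c\in\mathbb{Z}$ (by the inductive hypothesis applied to $vA_{E^6_1}$), so $v=\frac{a}{2^{k+1}}\begin{bmatrix}1&1&1\end{bmatrix}+\begin{bmatrix}0&b&c\end{bmatrix}A_{E^6_1}^{-1}$, and one computes $\begin{bmatrix}0&b&c\end{bmatrix}A_{E^6_1}^{-1}=\frac12\begin{bmatrix}b+c&-b+c&b-c\end{bmatrix}$. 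Then split off a multiple of $\begin{bmatrix}1&1&1\end{bmatrix}$: writing $\frac12\begin{bmatrix}b+c&-b+c&b-c\end{bmatrix}=\frac{b+c}{2}\begin{bmatrix}1&1&1\end{bmatrix}+\begin{bmatrix}0&-b&-c\end{bmatrix}$ shows $v\in\frac{1}{2^{k+1}}\mathbb{Z}\begin{bmatrix}1&1&1\end{bmatrix}\oplus 0\times\mathbb{Z}^2$, closing the induction and hence the equality.

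The only genuine obstacle is a bookkeeping one: I must double-check the precise form of $A_{E^6_1}^{-1}$ (the entries are $\pm\frac12$ in a symmetric pattern since $A_{E^6_1}$ is the adjacency matrix of the complete graph $K_3$) and verify that the off-diagonal row $\begin{bmatrix}0&b&c\end{bmatrix}$, after one application of $A_{E^6_1}^{-1}$, really does decompose as an integer multiple of $\begin{bmatrix}1&1&1\end{bmatrix}$ plus an integer vector with vanishing first coordinate — this is what makes the direct-sum decomposition $\oplus\, 0\times\mathbb{Z}^2$ stable under the induction. Since $b+c$ is an integer whenever $b,c$ are, the $\frac12$ is always absorbed, exactly as in the $E^1_3$ computation; there is no new phenomenon. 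Everything else is verbatim the argument of Proposition~\ref{profidsh6}, so I would simply write ``The proof follows the same path as Proposition~\ref{profidsh6}'' and present the one computation that differs, namely the explicit form of $A_{E^6_1}^{-1}$ and the splitting identity above.
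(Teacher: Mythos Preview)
Your proposal is correct and follows essentially the same approach as the paper's own proof: the paper computes the same inverse $A_{E^6_1}^{-1}=\tfrac12\begin{bmatrix}-1&1&1\\1&-1&1\\1&1&-1\end{bmatrix}$, uses the same eigenvector identity for $\begin{bmatrix}1&1&1\end{bmatrix}$, and performs the identical induction on $k$ with the same splitting $\tfrac12\begin{bmatrix}b+c&-b+c&b-c\end{bmatrix}=\tfrac{b+c}{2}\begin{bmatrix}1&1&1\end{bmatrix}+\begin{bmatrix}0&-b&-c\end{bmatrix}$. Your write-up is in fact a bit more concise than the paper's, which spells out the $k=1$ case separately before the general inductive step, but the mathematics is the same.
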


\begin{proof}
First note that $A_{E^6_1}^{-1}=\frac{1}{2}\begin{bmatrix}
 -1 & 1 & 1 \\
 1 & -1 & 1 \\
 1 & 1 & -1 
\end{bmatrix}.$ 
Consider $$v=\frac{a}{2^k}\begin{bmatrix} 1 & 1 & 1 \end{bmatrix} + \begin{bmatrix} 0 & b & c \end{bmatrix} \in \mathbb{Z}\left[ \frac{1}{2} \right] \begin{bmatrix} 1 & 1 & 1 \end{bmatrix} \oplus 0 \times \mathbb{Z}^2,$$ where $a,b,c \in \mathbb{Z}$ and $k\in \mathbb N$. Since  
$$\begin{bmatrix} 1 & 1 & 1 \end{bmatrix} \begin{bmatrix}
 -1 & 1 & 1 \\
 1 & -1 & 1 \\
 1 & 1 & -1 
\end{bmatrix} = \begin{bmatrix} 1 & 1 & 1 \end{bmatrix},$$ we have
\begin{align*} 
&\frac{a}{2^k} \begin{bmatrix} 1 & 1 & 1 \end{bmatrix} A_{E^6_1}^k  
= \frac{a}{2^k} \begin{bmatrix} 1 & 1 & 1 \end{bmatrix} \begin{bmatrix}
 -1 & 1 & 1 \\
 1 & -1 & 1 \\
 1 & 1 & -1 
\end{bmatrix}^k A_{E^6_1}^k \\ 
= & a \begin{bmatrix} 1 & 1 & 1 \end{bmatrix} \left( \frac{1}{2} \begin{bmatrix}
 -1 & 1 & 1 \\
 1 & -1 & 1 \\
 1 & 1 & -1 
\end{bmatrix} \right)^k A_{E^6_1}^k 
= a \begin{bmatrix} 1 & 1 & 1 \end{bmatrix} A_{E^6_1}^{-k} A_{E^6_1}^k \\ 
=& a \begin{bmatrix} 1 & 1 & 1 \end{bmatrix}. 
\end{align*}
Thus $\frac{a}{2^k}\begin{bmatrix} 1 & 1 & 1 \end{bmatrix}$ is an element of $\Delta_{A_{E^6_1}}$. The vector $ \begin{bmatrix} 0 & b & c \end{bmatrix}$ is also an element of $\Delta_{A_{E^6_1}}$ as $\mathbb{Z}^3 \subseteq \Delta_{A_{E^6_1}}$.\\
Hence $$ \mathbb{Z} \left[ \frac{1}{2} \right]\begin{bmatrix} 1 & 1 & 1 \end{bmatrix} \oplus 0 \times \mathbb{Z}^2 \subseteq \Delta_{A_{E^6_1}}.$$

For the reverse inclusion, let $v \in \Delta_{A_{E^6_1}}$. Then by Corollary~\ref{alternativeformdel},  $v \in \mathbb{Z}^3A_{E^6_1}^{-k}$, for some $k \geq 0$. We will prove by induction that $v \in \mathbb{Z} \begin{bmatrix} 1 & 1 & 1 \end{bmatrix} \oplus 0 \times \mathbb{Z}^2$. 

When $k=0$, $v \in \mathbb{Z}^3 \subseteq \mathbb{Z} \begin{bmatrix} 1 & 1 & 1 \end{bmatrix} \oplus 0 \times \mathbb{Z}^2 \subseteq \mathbb{Z} \left[ \frac{1}{2} \right]\begin{bmatrix} 1 & 1 & 1 \end{bmatrix} \oplus 0 \times \mathbb{Z}^2$. 

When $k=1$,  we can write $vA_{E^6_1}=a\begin{bmatrix} 1 & 1 & 1 \end{bmatrix} + \begin{bmatrix} 0 & b & c\end{bmatrix}$, where $a,b,c \in \mathbb{Z}$. Thus 
\begin{align*}
v=&\frac{a}{2}\begin{bmatrix} 1 & 1 & 1 \end{bmatrix} + \frac{1}{2} \begin{bmatrix} b+c & -b+c & b-c\end{bmatrix}\\ =& \frac{a}{2}\begin{bmatrix} 1 & 1 & 1 \end{bmatrix} + \frac{1}{2} \begin{bmatrix} b+c & b+c & b+c \end{bmatrix} + \frac{1}{2}  \begin{bmatrix} 0 & -2b & -2c\end{bmatrix} \\
=& \frac{a+(b+c)}{2}\begin{bmatrix} 1 & 1 & 1 \end{bmatrix} + \frac{1}{2} \begin{bmatrix} 0 & -2b & -2c\end{bmatrix} \\ =& \frac{a+(b+c)}{2}\begin{bmatrix} 1 & 1 & 1 \end{bmatrix} + \begin{bmatrix} 0 & -b & -c\end{bmatrix}.
\end{align*} 

Hence,  $$v \in \frac{1}{2}\mathbb{Z} \begin{bmatrix} 1 & 1 & 1 \end{bmatrix} \oplus 0 \times \mathbb{Z}^2 \subseteq \mathbb{Z} \left[ \frac{1}{2} \right]\begin{bmatrix} 1 & 1 & 1 \end{bmatrix} \oplus 0 \times \mathbb{Z}^2.$$
  
Assume now that  $vA_{E^6_1}^k \in \mathbb{Z}^3$ implies that $v \in \frac{1}{2^k}\mathbb{Z} \begin{bmatrix} 1 & 1 & 1 \end{bmatrix} \oplus 0 \times \mathbb{Z}^2$.
If $vA_{E^6_1}^{k+1} \in \mathbb{Z}^3$, then $vA_{E^6_1}=\frac{a}{2^k}\begin{bmatrix} 1 & 1 & 1 \end{bmatrix} + \begin{bmatrix} 0 & b & c \end{bmatrix} $ where $a,b,c \in \mathbb{Z}$. Thus \begin{align*}
v=&\frac{a}{2^{k+1}}\begin{bmatrix} 1 & 1 & 1 \end{bmatrix} + \frac{1}{2} \begin{bmatrix} b+c & -b+c & b-c\end{bmatrix}\\ =& \frac{a}{2^{k+1}}\begin{bmatrix} 1 & 1 & 1 \end{bmatrix} + \frac{1}{2} \begin{bmatrix} b+c & b+c & b+c \end{bmatrix} + \frac{1}{2} \begin{bmatrix} 0 & -2b & -2c\end{bmatrix} \\
=& \frac{a+2^k(b+c)}{2^{k+1}}\begin{bmatrix} 1 & 1 & 1 \end{bmatrix} + \frac{1}{2} \begin{bmatrix} 0 & -2b & -2c\end{bmatrix} \\ =& \frac{a+2^k(b+c)}{2^{k+1}}\begin{bmatrix} 1 & 1 & 1 \end{bmatrix} + \begin{bmatrix} 0 & -b & -c\end{bmatrix}.
\end{align*} Hence $v \in \frac{1}{2^{k+1}}\mathbb{Z} \begin{bmatrix} 1 & 1 & 1 \end{bmatrix} \oplus 0 \times \mathbb{Z}^2 \subseteq \mathbb{Z} \left[ \frac{1}{2} \right]\begin{bmatrix} 1 & 1 & 1 \end{bmatrix} \oplus 0 \times \mathbb{Z}^2$. So 
$$\Delta_{A_{E^6_1}} \subseteq \mathbb{Z} \left[ \frac{1}{2} \right]\begin{bmatrix} 1 & 1 & 1 \end{bmatrix} \oplus 0 \times \mathbb{Z}^2,$$ and thus the equality follows. 
\end{proof}

Since $E^6_1$ is primitive, we can use Corollary~\ref{halfspace} for calculating the positive cone.  Therefore we have:

\begin{align*}
K_0^{\gr}(L_\K(E^6_1)) & = \mathbb{Z}\left[ \frac{1}{2} \right] \begin{bmatrix} 1 & 1 & 1 \end{bmatrix} \oplus 0 \times \mathbb{Z}^2, \\ 
 K_0^{\gr}(L_\K(E^6_1))^+& =\left\{ {\bf u} \in \mathbb{Z}\left[ \frac{1}{2} \right] \begin{bmatrix} 1 & 1 & 1 \end{bmatrix} \oplus 0 \times \mathbb{Z}^2 \, | \, {\bf u} \cdot {\bf z} > 0 \right\} \cup \, \{0\}, \text{ where }\bf{z} \text{ is } \left[
\begin{array}{c}
1\\
1\\
1 \\
\end{array}
\right],\\
{}^1(a,b,c) & =(a,b,c) A_{E_6^1}= (b+c, a+c, a+b).
\end{align*}

{\bf 22.} Consider the graphs $E^7_{1}$ below with the adjacency matrix $A_{E^7_1}= \begin{bmatrix} 0 & 1 & 1 \\ 1 & 1 & 0 \\ 1 & 0 & 1 \\ \end{bmatrix}$.

\begin{figure}[H]
\centering
\begin{tikzpicture}[scale=3.5]
\fill (0,0)  circle[radius=.6pt];
\fill (-.35,-.61)  circle[radius=.6pt];
\fill (.35,-.61)  circle[radius=.6pt];
\draw (0,-.15) node{$v_1$};
\draw (-.70, 0) node{$E^7_1:$};
\draw (-.35+.13,-.535) node{$v_3$};
\draw (.35-.13,-.535) node{$v_2$};
\draw[->, shorten >=5pt] (0,0) to (.35,-.61);
\draw[->, shorten >=5pt] (.35,-.61) to [in=0,out=60] (0,0);
\draw[->, shorten >=5pt] (-.35,-.61) to (0,0);
\draw[->, shorten >=5pt] (0,0) to [in=120,out=180
] (-.35,-.61) ;
\draw[->, shorten >=5pt] (-.35,-.61) to[in=250,out=170, loop] (-.35,-.61);
\draw[->, shorten >=5pt] (.35,-.61) to[in=10,out=-70, loop] (-.35,-.61);
\end{tikzpicture}
\end{figure}

We have that $\det(E^7_{1})=2$. In order to calculate $K_0^{\gr}(L_\K(E^7_1))$, we will use the alternative description of
dimension group from Corollary~\ref{alternativeformdel}. Since $A_{E^7_1}^{-1}=\frac{1}{2}\begin{bmatrix}
 -1 & 1 & 1 \\
 1 & 1 & -1 \\
 1 & -1 & 1 
\end{bmatrix}$ and 
$$\begin{bmatrix} 1 & 1 & 1 \end{bmatrix} \begin{bmatrix}
 -1 & 1 & 1 \\
 1 & 1 & -1 \\
 1 & -1 & 1 
\end{bmatrix} = \begin{bmatrix} 1 & 1 & 1 \end{bmatrix},$$
a similar computations as in Propositions~\ref{profidsh6} and \ref{profidsh7} can be performed. We thus obtain 

\begin{align*}
K_0^{\gr}(L_\K(E^7_1)) & = \mathbb{Z}\left[ \frac{1}{2} \right] \begin{bmatrix} 1 & 1 & 1 \end{bmatrix} \oplus 0 \times \mathbb{Z}^2, \\ 
 K_0^{\gr}(L_\K(E^7_1))^+& =\left\{ {\bf u} \in \mathbb{Z}\left[ \frac{1}{2} \right] \begin{bmatrix} 1 & 1 & 1 \end{bmatrix} \oplus 0 \times \mathbb{Z}^2 \, | \, {\bf u} \cdot {\bf z} > 0 \right\} \cup \, \{0\}, \text{ where }\bf{z} \text{ is } \left[
\begin{array}{c}
1\\
1\\
1 \\
\end{array}
\right],\\
{}^1(a,b,c) & =(a,b,c) A_{E_7^1}= (b+c, a+b, a+c).
\end{align*}

\begin{rmk}
The items (3), (21) and (22) show that the graded Grothendieck groups of Leavitt path algebras of $E^1_3$, $E^6_1$ and $E^7_1$, their positive cones, as well as Perron-Frobenius Eigenvalue and vectors are the same. However these matrices are not shift equivalent. In fact, the action of $\mathbb{Z}[x,x^{-1}]$ on $K_0^{\gr}(L_\K(E^1_3))$ $K_0^{\gr}(L_\K(E^6_1))$ and $K_0^{\gr}(L_\K(E^7_1))$ are different. Thus one cannot use (\ref{lol153332}) to conclude their $K$-theories are also the same.  As the Table~\ref{tablegray}  shows the $K$-theory (i.e., the Frank-Bowen groups) of these three graphs are different. 
\end{rmk}

{\bf 23.} Consider the graphs $E^7_{2}$ below with the adjacency matrix  $E^7_2= \begin{bmatrix} 1 & 1 & 1 \\ 1 & 1 & 0 \\ 1 & 0 & 1 \\ \end{bmatrix}$.

\begin{figure}[H]
\centering
\begin{tikzpicture}[scale=3.5]
\fill (0,0)  circle[radius=.6pt];
\fill (-.35,-.61)  circle[radius=.6pt];
\fill (.35,-.61)  circle[radius=.6pt];
\draw (0,-.15) node{$v_1$};
\draw (-.70, 0) node{$E^7_2:$};
\draw (-.35+.13,-.535) node{$v_3$};
\draw (.35-.13,-.535) node{$v_2$};
\draw[->, shorten >=5pt] (0,0) to (.35,-.61);
\draw[->, shorten >=5pt] (.35,-.61) to [in=0,out=60] (0,0);
\draw[->, shorten >=5pt] (-.35,-.61) to (0,0);
\draw[->, shorten >=5pt] (0,0) to [in=120,out=180
] (-.35,-.61) ;
\draw[->, shorten >=5pt] (0,0) to[in=130,out=50, loop] (0,0);
\draw[->, shorten >=5pt] (-.35,-.61) to[in=250,out=170, loop] (-.35,-.61);
\draw[->, shorten >=5pt] (.35,-.61) to[in=10,out=-70, loop] (-.35,-.61);
\end{tikzpicture}
\end{figure}

Since $\det(E^7_{2})=-1$ and the graph is  primitive,  Lemma~\ref{DL} and Corollary~\ref{halfspace} give:

\begin{align*}
K_0^{\gr}(L_\K(E^7_2)) &=   \mathbb{Z} ^3, \\
K_0^{\gr}(L_\K(E^7_2))^+ &=\left\{ {\bf u} \in \mathbb{Z}^3 \, | \, {\bf u} \cdot {\bf z} > 0 \right\}  \cup \, \{0\}, \text{ where } \bf{z}  \text{ is } \left[
\begin{array}{c}
 \sqrt{2} \\ 1 \\ 1 \\
\end{array}
\right]
\\
{}^1(a,b,c)&=(a+b+c,a+b, a+c).
\end{align*}

We are in a position to prove Theorem~\ref{alergy1}

\begin{proof}[Proof of Theorem~\ref{alergy1}]
(1) $\rightarrow$ (2) Suppose the adjacency matrices $A_E$ and $A_F$ are shift equivalent. Then the Krieger's triple for $E$ and $F$ coincide, which implies that  $K^{\gr}_0(L_\K(E)) \cong K^{\gr}_0(L_\K(F))$ as order-preserving $\mathbb Z[x,x^{-1}]$-modules (see~\S\ref{sec:symbolic-dynamics}). From (\ref{lol153332}) it follows that $K_0(L_\K(E)) \cong K_0(L_\K(F))$ and that they have the same Perron-Frobenius eigenvalue by Proposition~\ref{marcusbks}. Thus the graphs $E$ and $F$ will be in the same group in the Table~\ref{tablegray}. The analysis of the cases (1)-(25) above shows that the graphs in each group are indeed strongly shift equivalent. 

(2) $\rightarrow$ (3) By Williams' work~\cite{williams,Lind-Marcus2021}, two matrices are strong shift equivalent if and only if they can be converted to each other with a series of in-split and out-split moves. Each of these moves gives a Leavitt path algebra which is graded equivalent to the original one~\cite{AP, HazaratDyn2013}. 

(3) $\rightarrow$ (4) This implication will be done at the end of the proof.

(4) $\rightarrow$ (5)
Suppose that the graph $C^*$-algebras $C^*(E)$ and $C^*(F)$ are equivariantly Morita equivalent. Then $K_0^{\mathbb T}(C^*(E)) \cong K_0^{\mathbb T}(C^*(F))$ as order-preserving $\mathbb Z[x,x^{-1}]$-modules (see \cite[p.~297]{comb} and \cite[Prop.~2.9.1]{chrisp}). For a finite graph $E$, 
there are canonical order isomorphisms of $\mathbb Z[x,x^{-1}]$-modules
\begin{equation}\label{noalergy}
K^{\gr}_0(L(E)) \cong K_0(L(E \times \mathbb Z)) \cong  K_0(C^*(E \times \mathbb Z))\cong K_0^{\mathbb T}(C^*(E)),
\end{equation}
where $E\times \mathbb Z$ is the covering graph of $E$ (see for example~\cite[p. 275]{mathann}, \cite[Proof of Theorem A]{eilers2}). Thus (5) follows. 

(5) $\rightarrow$ (6) Since the module isomorphism between the graded $K$-groups is order-preserving, the corresponding positive cones are $\mathbb Z$-isomorphic. But the positive cones are $\mathbb Z$-isomorphic with the talented monoids (see~\S\ref{sec:symbolic-dynamics}). Thus (6) follows. 

(6) $\rightarrow$ (1) Since the talented monoid $T_E$ of a graph $E$ is isomorphic to $\mathcal V^{\gr}(L_\K(E))$,  the group completion of $T_E$ retrieves $K^{\gr}_0(L_\K(E))$. 
Thus if $T_E \cong T_F$ as $\mathbb Z$-monoid then $K^{\gr}_0(L_\K(E))\cong K^{\gr}_0(L_\K(F))$ as order-preserving $\mathbb Z[x,x^{-1}]$-modules. Therefore the Krieger's triple of $E$ and $F$ coincide, which in return implies that the adjacency matrices $A_E$ and $A_F$ are shift equivalent.

(3) $\rightarrow$ (4) Suppose $L_\K(E)$ and $L_\K(F)$ are graded Morita equivalent. Then their graded $K$-groups are module isomorphic, i.e. (5). Since we already proved (5) $\rightarrow$ (6) $\rightarrow$ (1)  $\rightarrow$ (2), then the graphs are strongly shift equivalent. From \cite[Theorem 3.2]{pask}, the in-splitting and out-splittings we used to convert from $E$ to $F$ give a Morita equivalence between $C^*(E)$ and $C^*(F)$. Furthermore, \cite[Theorem 3.4]{ER} guarantees that this equivalence is equivariant. 
\end{proof}

 \begin{proof}[Proof of Theorem~\ref{alergy2}]
 
 (1) $\rightarrow$ (2) This is immediate. 
 
 (2) $\rightarrow$ (1) This follows from analysis of the cases (1)-(24) above. 
 \end{proof}

We remark that the analytic version of  Theorem~\ref{alergy2} will follow from the work of Bratteli and Kishimoto~\cite[Crollary~4.1]{bratteli} which uses the methodology from classification of $C^*$-algebras and the Rokhlin property.

\subsection{Codes}\label{mathematicacode}

The following code, written in Mathematica,  will check whether two matrices $A$ and $B$ with entries $0$ and $1$ and of arbitrary sizes, are elementary shift equivalent. 

\medskip

\begin{verbatim}
Eshift[A_, B_] := Module[{r = Length[A], c = Length[B], xSet = {}},
  d = r * c;
  (* all r*c matrices with entries 0 and 1 *)
  t = Tuples[Range[0, 1], {r, c}];
  s = Tuples[Range[0, 1], {c, r}];
  (* Range[0,k] gives matrices with entries between 0 and k *)
  Do[
   If[
    t[[i]] . s[[j]] == A && s[[j]] . t[[i]] == B,
    AppendTo[xSet, {t[[i]], s[[j]]}]
    ],
   {i, 1, 2^d}, {j, 1, 2^d}
   ];
  Map[MatrixForm, xSet, {2}] 
  ]

\end{verbatim}
  
  With the above program we can check that, for example,  $E^2_{3}$ is elementary shift equivalent to $E^3_{4}$:
  
  \begin{verbatim}
Eshift[{{0, 1, 0}, {1, 0, 1}, {1, 1, 1}}, {{1, 1, 1}, {1, 0, 0}, {1, 0, 0}}]
   
\end{verbatim}

$$\left \{ \left \{ \left[
\begin{array}{ccc}
 0 & 0 & 1 \\
 1 & 0 & 0 \\
 1 & 1 & 0 \\
\end{array}
\right], \left[
\begin{array}{ccc}
 1 & 0 & 1 \\
 0 & 1 & 0 \\
 0 & 1 & 0 \\
\end{array}
\right]\right \},
 \left \{ \left[
\begin{array}{ccc}
 0 & 0 & 1 \\
 1 & 0 & 0 \\
 1 & 1 & 0 \\
\end{array}
\right], \left[
\begin{array}{ccc}
 1 & 0 & 1 \\
 0 & 1 & 0 \\
 0 & 1 & 0 \\
\end{array}
\right]\right \}
\right \}$$

\medskip 

Thus $E^2_{3} = RS$ and $E^3_{4}=SR$, where $R$ and $S$ could be any of the pairs above.  On the other hand, $E^2_{3}$ and $E^2_{4}$ are not elementary shift equivalent:

\begin{verbatim}
Eshift[{{0, 1, 0}, {1, 0, 1}, {1, 1, 1}}, {{1, 1, 0}, {1, 0, 1}, {1,  1, 0}}]
{} 
   \end{verbatim}

\end{document}